\newtheorem{theorem}{Theorem}
\newtheorem{proposition}[theorem]{Proposition}
\newtheorem{lemma}[theorem]{Lemma}
\newtheorem{corollary}[theorem]{Corollary}
\newtheorem{definition}[theorem]{Definition}
\newtheorem{remark}[theorem]{Remark}
\newcommand{\R}{{\mathbb R}}
\newcommand{\N}{{\mathbb N}}
\newcommand{\Z}{{\mathbb Z}}
\newcommand{\C}{{\mathbb C}}
\newcommand{\Sp}{{\mathbb S^2}}
\newcommand{\be}[1]{\begin{equation}\label{#1}}
\newcommand{\ee}{\end{equation}}
\renewcommand{\(}{\left(}
\renewcommand{\)}{\right)}
\newcommand{\ird}[1]{\int_{\R^3}{#1}\,dx}
\newcommand{\ir}[1]{\int_{\R}{#1}\,ds}
\newcommand{\is}[1]{\int_{\Sp}{#1}\,d\omega}
\newcommand{\ic}[1]{\iint_{\R\times\Sp}{#1}\,ds\,d\omega}
\newcommand{\nrmr}[2]{\left\|{#1}\right\|_{\mathrm L^{#2}(\R)}}
\newcommand{\nrms}[2]{\left\|{#1}\right\|_{\mathrm L^{#2}(\Sp,d\omega)}}
\newcommand{\nrmc}[2]{\|{#1}\|_{\mathrm L^{#2}(\R\times\Sp)}}
\renewcommand{\Re}{\mathrm{Re}}
\renewcommand{\Im}{\mathrm{Im}}
\newcommand{\lrangle}[2]{\left\langle{#1},{#2}\right\rangle}
\newenvironment{psmallmatrix}{\(\begin{smallmatrix}}{\end{smallmatrix}\)}
\newcounter{taggedeq}
\pretocmd{\equation}{\stepcounter{taggedeq}}{}{}
\newcommand{\msc}[1]{\href{https://zbmath.org/classification/?q=cc:#1}{#1}}
\definecolor{darkgreen}{rgb}{.0,.4,.2}
\newcommand{\greenbw}{green} \newcommand{\redbw}{red} 
\title[The CKN inequality for spinors: symmetry and symmetry breaking]{The CKN inequality for spinors:\\ symmetry and symmetry breaking}
\author[J.~Dolbeault]{Jean Dolbeault}
\address{\hspace*{-12pt}J.~Dolbeault: CEREMADE (CNRS UMR n$^\circ$ 7534), PSL university, Universit\'e Paris-Dauphine,\newline Place de Lattre de Tassigny, 75775 Paris 16, France}
\email{dolbeaul@ceremade.dauphine.fr}
\author[M.J.~Esteban]{Maria J.~Esteban}
\address{\hspace*{-12pt}M.J.~Esteban: CEREMADE (CNRS UMR n$^\circ$ 7534), PSL university, Universit\'e Paris-Dauphine,\newline Place de Lattre de Tassigny, 75775 Paris 16, France}
\email{esteban@ceremade.dauphine.fr}
\author[R.L.~Frank]{Rupert L.~Frank}
\address{\hspace*{-12pt}R.L.~Frank: Department of Mathematics, LMU Munich, Theresienstr. 39, 80333 M\"unchen,\newline Germany; Munich Center for Quantum Science and Technology, Schellingstr.~4,\newline 80799 M\"unchen, Germany; Mathematics 253-37, Caltech, Pasadena, CA 91125, USA}
\email{r.frank@lmu.de}
\author[M.~Loss]{Michael Loss}
\address{\hspace*{-12pt}M.~Loss: School of Mathematics, Georgia Institute of Technology Atlanta, GA 30332,\newline United States of America}
\email{loss@math.gatech.edu}
\begin{document}
\begin{abstract} This paper is devoted to Sobolev interpolation inequalities for spinors, with weights of Caffarelli-Kohn-Nirenberg (CKN) type. In view of the corresponding results for scalar functions, a natural question is to determine whether optimal spinors have symmetry properties, or whether spinors with symmetry properties are linearly unstable, in which case we shall say that symmetry breaking occurs. What symmetry means has to be carefully defined and the overall picture turns out to be richer than in the scalar case. So far, no symmetrization technique is available in the spinorial case. We can however determine a range of the parameters for which symmetry holds using a detailed analysis based mostly on spectral methods.
\\[4pt]
\noindent{\sc Résumé.} Cet article est consacré à des inégalités d'interpolation de Sobolev pour les spineurs, avec des poids de type Caffarelli-Kohn-Nirenberg (CKN). Au vu des résultats correspondants pour les fonctions scalaires, une question naturelle consiste à déterminer si les spinors optimaux possèdent des propriétés de symétrie, ou si les spinors dotés de propriétés de symétrie sont linéairement instables, auquel cas on dira qu'une brisure de symétrie se produit. La notion de symétrie doit être définie avec précision, et le tableau d'ensemble s'avère plus riche que dans le cas scalaire. À ce jour, aucune technique de symétrisation n'est disponible pour les spineurs. Nous pouvons néanmoins déterminer un intervalle de paramètres pour lequel la symétrie est vérifiée grâce à une analyse détaillée, basée principalement sur des méthodes spectrales.
\end{abstract}
\keywords{Caffarelli-Kohn-Nirenberg inequalities; spinors; symmetry; symmetry breaking}

\subjclass[2020]{Primary: \msc{35B06}; Secondary: \msc{26D10}, \msc{81Q10}.}
\date{July 13, 2026. {\em File:} \textsf{\jobname.tex}}
\maketitle
\vspace*{-0.6cm}
\tableofcontents
\thispagestyle{empty}

\section{Introduction and main result (I)}\label{Sec:Introduction}

A perennial question in the calculus of variation concerns the symmetry properties of optimizers.
A functional $\mathcal F$ is invariant under a group $G$ if $ \mathcal F(g^*\psi)=\mathcal F(\psi)$ for all~$\psi$
and for all $g \in G$, where $g^*\psi$ denotes the action of $g$ on $\psi$. If $\psi_{\!m}$ is an optimizer for the functional $\mathcal F$,
is it true that $\psi_{\!m}$ is invariant under the action of the group $G$?
This is often not the case; the symmetry is then broken.
Exceptions to this rule are, in general, difficult to find and can pose considerable mathematical challenges.
An example where these features are nicely displayed are the Caffarelli-Kohn-Nirenberg inequalities~\cite{MR768824} (also see~\cite{MR0136692} for an earlier related result),
\begin{equation}\label{CKN}\tag{CKN}
\int_{\R^d} \frac{|\nabla v(x)|^2}{|x|^{2a}}\,dx \ge C_{a,b,d}\left(\int_{\R^d} \frac{|v(x)|^p}{|x|^{bp}}\,dx \right)^{2/p}\,.
\end{equation}
Here $d\ge 3$ is an integer, $a$, $b$ are real numbers with $a \le b \le a+1$ and, to ensure scale invariance, $p = \frac{2d}{d-2-2a+2b}$. This inequality is often considered only for real functions, but extends trivially to complex-valued functions (see Remark~\ref{Rem:ComplexCKN}).
The problem is to find the optimal constant $C_{a,b,d}$, which is positive if $a\neq(d-2)/2$,
to discuss whether $C_{a,b,d}$ is achieved or not, and in case optimizers exist, to analyse their main qualitative properties, like positivity and symmetry.
If one replaces the function $v(x)$ by $v\big(R^{-1}x\big)$, where $R$ is any rotation, then the value of the functional does not change.
Thus, the expectation is that the optimal function, when it exists, is radial.
It was shown by Catrina and Wang~\cite{MR1794994}, with later improvements by Felli and Schneider~\cite{MR1973285}, that
below a certain curve $C_{\rm FS}^{(d)}(a,b)=0$ in the parameter space $\{a,b\}$, the rotational symmetry of the optimizer is broken.
In a later work~\cite{MR3570296}, the rotational symmetry in the remaining region was established.

A natural continuation of this line of research is the analogous question for \emph{spinor valued functions}. While this question could be studied in any dimension $d$, below we restrict ourselves to the case where $d=3$. The scalar variable $v$ in~\eqref{CKN} is replaced by the $\C^2$-valued function, or `2-spinor',
$$
\psi(x) = \left(\begin{array}{c} \psi_1(x) \\\psi_2(x) \end{array}\right),
$$
and the gradient is replaced by
$$
\sigma\cdot\nabla\psi = \sum_{j=1}^3 \sigma_{\!j}\,\partial_{\!j}\psi \,.
$$
Here, $\sigma=(\sigma_{\!j})_{j=1,2,3}$ is the family of the Pauli matrices
$$
\sigma_1=\left(\begin{array}{cc}0 & 1 \\ 1 & 0 \end{array}\right)\,,\quad\sigma_2= \left(\begin{array}{cc} 0 & -i \\ i & 0 \end{array}\right)\,,\quad\sigma_3= \left(\begin{array}{cc}1 & 0 \\ 0 & -1 \end{array}\right)\,.
$$
The role of the functional inequality~\eqref{CKN} is played by the inequality
\begin{equation}\label{SCKN}\tag{SCKN}
\int_{\R^3} \frac{|\sigma\cdot \nabla \psi (x)|^2}{|x|^{2\,\alpha}}\,dx \ge \mathcal C_{\alpha,\beta}\left(\int_{\R^3} \frac{|\psi(x)|^p}{|x|^{\beta\,p}}\,dx \right)^{2/p}\,.
\end{equation}
For the same scaling reasons as above, the exponent $p$ is given by
\begin{equation}\label{param}
p = \frac{6}{1-2\,\alpha+2\,\beta}\,.
\end{equation}
Here we consider only the dimension $d=3$ and assume that
$$
\alpha\le\beta\le\alpha+1\,,
$$
which means that $p\in[2,6]$ because of~\eqref{param}. By definition, $\mathcal C_{\alpha,\beta}\ge0$ is the best constant in the space
$$
\mathcal D_{\alpha, \beta}:=\Big\{ \psi:\,|x|^{-\beta }\psi\in \mathrm L^p(\R^3, \C^2)\;;\;\; |x|^{-\alpha}\,\sigma\cdot \nabla \psi \in \mathrm L^2(\R^3, \C^2)\Big\}\,.
$$
In contrast to the scalar inequality~\eqref{CKN}, an infinite set of special values of $\alpha$ appears for which the inequality fails. We set
\be{Lambda}
\Lambda :=\left\{k-\tfrac12\,:\,k\in\Z\setminus\{0\}\right\}\,.
\ee
\begin{proposition}\label{validity}
If $\alpha\in\Lambda$, then $\mathcal C_{\alpha,\beta}=0$ for all $\alpha\leq\beta\leq\alpha+1$. If $\alpha\not\in\Lambda$, then $\mathcal C_{\alpha,\beta}>0$ for all $\alpha\leq\beta\leq\alpha+1$.
\end{proposition}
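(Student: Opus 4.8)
\emph{Proof proposal.} The plan is to turn \eqref{SCKN} into an equivalent inequality on the cylinder $\R\times\Sp$ by an Emden--Fowler change of variables, after which the whole question reduces to the spectrum of a single angular operator.

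Write $r=|x|$, $\hat x=x/|x|$, and recall the polar decomposition
\begin{equation*}
\sigma\cdot\nabla=(\sigma\cdot\hat x)\Big(\partial_r+\tfrac1r-\tfrac1r\,\mathrm K\Big),\qquad \mathrm K:=\sigma\cdot L+1,\quad L:=-i\,x\times\nabla,
\end{equation*}
where $\mathrm K$ is the spin--orbit operator on $\mathrm L^2(\Sp,\C^2)$; its spectrum is $\Z\setminus\{0\}$ (the eigenvalue $\kappa$ having multiplicity $2|\kappa|$), and $\sigma\cdot\hat x$ is a pointwise unitary with $(\sigma\cdot\hat x)^2=1$, hence invisible in $|\sigma\cdot\nabla\psi|$. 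Substituting $\psi(x)=|x|^{\alpha-\frac12}\,\Phi\big(\log|x|,\hat x\big)$, a direct computation using \eqref{param} gives
\begin{equation*}
\ird{\frac{|\sigma\cdot\nabla\psi|^2}{|x|^{2\alpha}}}=\ic{\big(|\partial_s\Phi|^2+|\mathrm M\,\Phi|^2\big)},\qquad
\ird{\frac{|\psi|^p}{|x|^{\beta p}}}=\ic{|\Phi|^p},
\end{equation*}
with $\mathrm M:=\mathrm K-\alpha-\tfrac12$ self-adjoint on $\mathrm L^2(\Sp,\C^2)$, of eigenvalues $\mu_\kappa:=\kappa-\alpha-\tfrac12$ ($\kappa\in\Z\setminus\{0\}$); the cross term obtained from expanding $|\partial_s\Phi-\mathrm M\Phi|^2$ drops out after integration by parts in $s$, since $\mathrm M$ is $s$-independent (cleanly seen from the polar decomposition of the closed operator $\partial_s-\mathrm M$, whose square is $-\partial_s^2+\mathrm M^2$). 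Therefore $\mathcal C_{\alpha,\beta}$ is exactly the best constant in $\ic{\big(|\partial_s\Phi|^2+|\mathrm M\,\Phi|^2\big)}\ge\mathcal C_{\alpha,\beta}\,\nrmc{\Phi}{p}^2$, and $0\in\mathrm{spec}(\mathrm M)$ if and only if $\alpha\in\Lambda$.

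\emph{Case $\alpha\in\Lambda$.} Choose $\kappa_0\in\Z\setminus\{0\}$ with $\mu_{\kappa_0}=0$ and a normalized eigenfunction $\chi$ of $\mathrm K$ in the corresponding eigenspace. Testing with $\Phi(s,\omega):=g(s)\,\chi(\omega)$, $g\in C_c^\infty(\R)$, we have $\mathrm M\Phi=0$, so the Rayleigh quotient equals $\nrmr{g'}{2}^2/\big(\nrms{\chi}{p}^2\,\nrmr{g}{p}^2\big)$; replacing $g$ by $g(\lambda\,\cdot)$ and letting $\lambda\to0^+$ it behaves like $\lambda^{1+2/p}\to0$ (the exponent is positive since $p<\infty$). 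The corresponding $\psi$ is smooth and compactly supported in $\R^3\setminus\{0\}$, hence lies in $\mathcal D_{\alpha,\beta}$; therefore $\mathcal C_{\alpha,\beta}=0$.

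\emph{Case $\alpha\notin\Lambda$.} Then $\delta:=\inf_\kappa|\mu_\kappa|=\mathrm{dist}\big(0,\mathrm{spec}(\mathrm M)\big)>0$. The point is that $\mu_\kappa^2=(\kappa-\alpha-\tfrac12)^2$ grows quadratically in $\kappa$, at the same rate as the eigenvalues $\kappa(\kappa-1)$ of $-\Delta_{\Sp}$ on the eigenspaces of $\mathrm K$; together with $\mu_\kappa^2\ge\delta^2$ this yields $\varepsilon>0$ with $\mathrm M^2\ge\varepsilon\,(1-\Delta_{\Sp})$ on $\mathrm L^2(\Sp,\C^2)$. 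Hence
\begin{equation*}
\ic{\big(|\partial_s\Phi|^2+|\mathrm M\,\Phi|^2\big)}\ge\min(1,\varepsilon)\,\ic{\big(|\partial_s\Phi|^2+|\nabla_\omega\Phi|^2+|\Phi|^2\big)}=\min(1,\varepsilon)\,\nrm{\Phi}{\mathrm H^1(\R\times\Sp)}^2.
\end{equation*}
By \eqref{param} one has $p\in[2,6]$, and since $\R\times\Sp$ is a three-dimensional complete manifold of bounded geometry the Sobolev embedding $\mathrm H^1(\R\times\Sp)\hookrightarrow\mathrm L^p(\R\times\Sp)$ holds (for the borderline $p=6$ it also follows from the Euclidean Sobolev inequality through the change of variables $\Phi\mapsto|x|^{-1/2}\Phi(\log|x|,\hat x)$, the emerging Hardy term having the right sign). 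Thus $\ic{\big(|\partial_s\Phi|^2+|\mathrm M\,\Phi|^2\big)}\ge c\,\nrmc{\Phi}{p}^2$ for some $c>0$, i.e.\ $\mathcal C_{\alpha,\beta}\ge c>0$.

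\emph{Where the work lies.} The substance is in the reduction step: the polar decomposition of $\sigma\cdot\nabla$, the identification of $\mathrm{spec}(\mathrm K)$, and the change of variables making the cross term cancel and the exponents match (both forced by \eqref{param}). After that, the one genuinely nonobvious point in the positivity part is that the crude bound $|\mathrm M\Phi|^2\ge\delta^2|\Phi|^2$ is too weak — a spinor concentrating on $\Sp$ would then violate the inequality for $p>2$ — so one really needs the quadratic growth of $\mu_\kappa^2$ to recover full $\mathrm H^1$-control on the cylinder. The remaining items (the precise domain and density statements, and the Sobolev inequalities on $\R\times\Sp$) are routine.
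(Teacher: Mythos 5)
Your proof is correct and follows essentially the same route as the paper's: the Emden--Fowler substitution $\psi=|x|^{\alpha-1/2}\Phi(\log|x|,\hat x)$ leading to the cylinder form (the paper's Proposition~\ref{basiccomp} and Lemma~\ref{Lem:chvar}), the observation that the angular operator $\mathrm M^2=\big(\sigma\cdot L-\alpha+\tfrac12\big)^2$ dominates $\varepsilon(1-\Delta_{\Sp})$ precisely when $\alpha\notin\Lambda$ (the paper's Lemma~\ref{Lem:H1}), the Sobolev embedding $\mathrm H^1(\R\times\Sp)\hookrightarrow\mathrm L^p(\R\times\Sp)$, and the one-dimensional scaling of a test function supported in $\ker\mathrm M$ to get $\mathcal C_{\alpha,\beta}=0$ when $\alpha\in\Lambda$. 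The only cosmetic difference is that you obtain the cylinder identity by expanding $|\partial_s\Phi-\mathrm M\Phi|^2$ and letting the cross term integrate away, while the paper first performs the analogous cancellation on $\R^3$ in Proposition~\ref{basiccomp} and then passes to logarithmic coordinates.
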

Our main interest in this paper is in the sharp constant $\mathcal C_{\alpha,\beta}$ and the corresponding set of optimizers.
The chief reason for investigating such a problem is that spinors are ubiquitous in many of the equations of physics: for instance, in the Dirac equation, the Pauli equation and in the influence of the L-S coupling on the spectrum of atoms.
In a particular case, Inequality~\eqref{SCKN} can be viewed as a generalization of the Hardy inequality for spinors when $\alpha = -\,1/2$, $\beta=1/2$ and $p=2$,
\begin{equation}\label{Hardy-Dirac}
\int_{\R^3} |x|\,|\sigma\cdot \nabla \psi (x)|^2dx \ge \int_{\R^3} \frac{|\psi(x)|^2}{|x|}\,dx\,,
\end{equation}
where the constant $\mathcal C_{-1/2,1/2}=1$ is sharp. This inequality has proven to be very useful for analyzing Dirac-Coulomb Hamiltonians~\cite{MR1855581, MR1761368, MR3960263, MR4353967, MR4332485, MR4056270}.

The scalar~\eqref{CKN} inequality is invariant under the group O(3) in the sense that both sides of the inequality remain the same if $v$ is replaced by $v_R$ for $R\in$ O(3), where $v_R(x):=v\big(R^{-1}x\big)$. Similarly, we now show that the spinorial inequality~\eqref{SCKN} is invariant under the group SU(2). If $A$ is any SU(2) matrix, then, recalling that $\sigma=(\sigma_k)_{k=1,2,3}$ is a basis for the traceless Hermitean $(2\times 2)$-matrices, one has
\begin{equation}
\label{eq:representation}
A^* \sigma_{\!i} A = \sum_{k=1}^3 R_{ik}(A)\,\sigma_k\,.
\end{equation}
The matrix $R(A) =(R_{ik}(A))$ turns out to be a rotation. The map $A \mapsto R(A)$ is a representation of the group SU(2), i.e., $R(A\,B)=R(A)\,R(B)$.
If one transforms
\[
\psi(x) \mapsto A\,\psi \big(R^{-1}(A)\,x\big) =: \psi_{\!A}(x)\,,
\]
one finds that
$$
\sigma \cdot \nabla \psi_{\!A}(x)
=(\sigma \cdot \nabla \psi)_A(x)\,.
$$
Thus,
$$
\int_{\R^3} \frac{|\sigma\cdot \nabla \psi_{\!A} (x)|^2}{|x|^{2\,\alpha}}\,dx =\int_{\R^3} \frac{ |\sigma\cdot \nabla \psi (x)|^2}{|x|^{2\,\alpha}}\,dx \quad {\rm and} \quad \int_{\R^3} \frac{ |\psi_{\!A}(x)|^p}{|x|^{\beta\,p}}\,dx =\int_{\R^3} \frac{|\psi(x)|^p}{|x|^{\beta\,p}}\,dx \,.
$$
This shows that Inequality~\eqref{SCKN} is SU(2)-invariant.

In the symmetry problem for the scalar~\eqref{CKN} inequality one asks whether some (or any) optimizer is invariant under the O(3) action. In the spinorial case, however, the symmetry question has to be phrased differently, and \emph{this is one of the main differences from the scalar case}. The reason is that there is no nontrivial spinor that is invariant under the SU(2) action. Indeed, if $\psi \in\mathrm L^1_{\rm loc}(\R^3)$ satisfies $\psi_{\!A} = \psi$ for all $A\in$ SU(2), then $\psi\equiv 0$. The proof of this assertion is simple. Indeed, the invariance of the left side of~\eqref{eq:representation} under $A\mapsto -A$ implies that $R(A)=R(-A)$. Thus, if $\psi_{\!A}=\psi=\psi_{-A}$, then $A\,\psi(x) =\psi\big(R(A)\,x\big)=\psi\big(R(-A)\,x\big)=-\,A\,\psi(x)$, so $\psi(x)=0$.

We now explain how to formulate the notion of symmetry in the spinorial case. We consider the uniform probability measure $d\omega$ on $\Sp$, that is, in spherical coordinates, $d\omega= \tfrac1{4\,\pi}\sin\theta\,d\theta\,d\varphi$, and the corresponding Hilbert space $\mathrm L^2( \Sp, \C^2; d\omega)$ of (angular) spinors on the sphere~$\Sp$. For $\xi\in \mathrm L^2( \Sp, \C^2; d\omega)$, let $\xi_{\!A}(\omega):= A\,\xi\big(R^{-1}(A)\,\omega\big)$. Then $\xi\mapsto\xi_{\!A}$ is a representation of SU(2) in $\mathrm L^2( \Sp, \C^2; d\omega)$. Decomposing into irreducible representations we obtain the orthogonal decomposition
$$
\mathrm L^2\( \Sp, \C^2; d\omega\) = \bigoplus_{k\in\Z\setminus\{-1\}} \mathcal H_k\,.
$$
Each subspace $\mathcal H_k$ transforms irreducibly under the transformation $\xi\mapsto\xi_{\!A}$. The integer~$k$ in this decomposition is an eigenvalue of the operator $\sigma\cdot L$, where $L:= \omega\wedge(-i\,\nabla)$ is the angular momentum operator (see~\cite[p.~8 \& pp.~123--127]{MR1219537}). We recall the details of this decomposition in Appendix~\ref{App5}.

An elementary calculation shows that
\begin{equation} \label{commutation}
(\sigma \cdot \omega) (\sigma\cdot L +1) = -\,(\sigma\cdot L +1)\,(\sigma \cdot \omega)
\end{equation}
for all $\omega\in\Sp$. Since $\sigma\cdot\omega$ provides a unitary equivalence between $\mathcal H_k$ and $\mathcal H_{-2-k}$, we see that the representations for $k \ge 0$ and $k \le -\,2$ are equivalent.

The irreducible subspaces of lowest dimension are $\mathcal H_0$ and $\mathcal H_{-2}$ and the dimension of each of these two spaces is $2$. Thus, we propose to reformulate the notion of spherical symmetry of an optimizer of~\eqref{SCKN} as {\it belonging to an irreducible representation of lowest dimension}. This should be compared with the notion of symmetric functions in the setting of the scalar inequality~\eqref{CKN}, where the irreducible subspace of the lowest dimension of the representation of O(3) has dimension one and corresponds to spherically symmetric functions.

Using the fact that $\mathcal H_0$ consists of constant spinors, and consequently $\mathcal H_{-2}$ consists of $\sigma\cdot\omega$ times constant spinors, we arrive at the following definition.
\begin{definition} A spinor $\psi$ on $\R^3$ is \emph{symmetric} if there is a scalar, complex-valued function $f$ on $\R_+$ and a constant spinor $\chi_0\in\C^2$ such that either
$$
\psi(x) = f(|x|) \, \chi_0
\quad\text{for a.e.}\ x\in\R^3 \,,
$$
or
$$
\psi(x) = f(|x|) \, \sigma\cdot\tfrac{x}{|x|}\, \chi_0
\quad\text{for a.e.}\ x\in\R^3 \,.
$$
We call these symmetric spinors of type $\mathcal H_0$ and $\mathcal H_{-2}$, respectively.
\end{definition}
Let us define $\mathcal C^\star_{\alpha,\beta}$ as the best constant in the Inequality~\eqref{SCKN} when restricted to \emph{symmetric spinors}. This constant can be easily computed and the optimizers can be classified explicitly, as will be shown in Proposition~\ref{symmetricproblem}. In particular, symmetric optimizers are of type $\mathcal H_0$ if $\alpha> -1/2$ and of type $\mathcal H_{-2}$ if $\alpha< -1/2$.

\medskip These preliminary considerations raise an obvious question:\\
\centerline{\emph{For which $\alpha$ and $\beta$ are the global optimizers for Inequality~\eqref{SCKN} symmetric ?}}
\\Symmetry regions are shown in green 
Before stating our main result concerning this question, we mention one more special feature of the family of Inequalities~\eqref{SCKN}. Namely, given $\psi\in \mathcal D_{\alpha,\beta}$, let us set
$$
\widetilde\psi(x) := |x|^{-2\,\alpha-1}\,\sigma\cdot\tfrac{x}{|x|}\, \psi(x) \,.
$$
Then one has $\widetilde\psi \in \mathcal D_{-\alpha-1,\beta-2\,\alpha-1}$ and
$$
\int_{\R^3} \frac{|\sigma\cdot \nabla \widetilde\psi (x)|^2}{|x|^{-2\,(\alpha+1)}}\,dx
= \int_{\R^3} \frac{|\sigma\cdot \nabla \psi (x)|^2}{|x|^{2\,\alpha}}\,dx
\quad\text{and}\quad
\int_{\R^3} \frac{|\widetilde\psi(x)|^p}{|x|^{(\beta-2\,\alpha-1) p}}\,dx
= \int_{\R^3} \frac{|\psi(x)|^p}{|x|^{\beta\,p}}\,dx \,.
$$
These identities follow by a straightforward computation, but they will become obvious later in this introduction. As a consequence, we deduce that
$$
\mathcal C_{\alpha,\beta} = \mathcal C_{-\alpha-1,\beta-2\,\alpha-1}
\quad\text{and}\quad
\mathcal C^\star_{\alpha,\beta} = \mathcal C^\star_{-\alpha-1,\beta-2\,\alpha-1} \,.
$$
Moreover, minimizers for the inequality with parameters $(\alpha,\beta)$ are in one-to-one correspondence with minimizers for the inequality with parameters $(-\alpha-1,\beta-2\,\alpha-1)$. Note that the exponent $p$ in~\eqref{param} is the same for both parameter pairs. Moreover, parameter pairs with $\alpha=-\,\frac12$ are invariant under the transformation. As a consequence of this transformation we restrict ourselves in the following theorem to parameter values $\alpha\geq -1/2$.

The following is a first version of our main result.
\begin{theorem}\label{Cor:main}
There are optimizers for~\eqref{SCKN} for all $\alpha\in\R\setminus\Lambda$ and $\alpha<\beta<\alpha+1$. Moreover, there are two functions $\overline\alpha_\star$ and $\overline\alpha^\star$ on $(0,1)$, taking values respectively in $(-1/2,0]$ and $[1/2,1)$, such that the following holds for all $\alpha\in[-1/2,\infty)\setminus\Lambda$ and $\alpha<\beta<\alpha+1$:
\begin{enumerate}
\item[\rm(i)] if either $\overline\alpha_\star(\beta-\alpha)<\alpha<1/2$ or $1/2<\alpha<\overline\alpha^\star(\beta-\alpha)$, then $\mathcal C_{\alpha,\beta}=\mathcal C^\star_{\alpha,\beta}$ and all optimizers are then symmetric,
\item[\rm(ii)] if either $-\,1/2\le\alpha<\overline\alpha_\star(\beta-\alpha)$ or $\alpha>\overline\alpha^\star(\beta-\alpha)$, then $\mathcal C_{\alpha,\beta}<\mathcal C^\star_{\alpha,\beta}$
\end{enumerate}
The corresponding results are valid for $\alpha\in(-\infty,-1/2]$ in view of the symmetry of the problem under the map $(\alpha,\beta)\mapsto(-\alpha-1,\beta-2\,\alpha-1)$.
\end{theorem}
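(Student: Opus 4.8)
The plan is to reduce \eqref{SCKN} to a one-dimensional problem on the cylinder $\R\times\Sp$ via the Emden–Fowler (logarithmic) change of variables, decompose into the irreducible blocks $\mathcal H_k$, and then perform a spectral analysis of the linearized operator at the symmetric optimizer to locate the symmetry-breaking thresholds. First I would set $x=e^{-s}\,\omega$ with $s\in\R$, $\omega\in\Sp$, and $\psi(x)=|x|^{\gamma}\,\phi(s,\omega)$ for the appropriate power $\gamma=\gamma(\alpha)$ that turns both integrals into weight-free integrals on the cylinder. The identity $|\sigma\cdot\nabla\psi|^2$ expands, using $\sigma\cdot\nabla = (\sigma\cdot\omega)\,(\partial_r - \tfrac1r\,\sigma\cdot L)$ (with $L=\omega\wedge(-i\nabla)$, as in the commutation relation~\eqref{commutation}), into a quadratic form on the cylinder involving $\partial_s\phi$, the angular operator $\sigma\cdot L$ (shifted by the constant coming from $\gamma$), and a zeroth-order term. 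This also makes the Hardy-type case $(\alpha,\beta)=(-1/2,1/2)$, $p=2$ transparent, explains the self-map $(\alpha,\beta)\mapsto(-\alpha-1,\beta-2\alpha-1)$ as the reflection $s\mapsto-s$ composed with $\phi\mapsto\sigma\cdot\omega\,\phi$, and (combined with Proposition~\ref{validity}) gives existence of optimizers by concentration-compactness on the cylinder for $\alpha\notin\Lambda$ and $\alpha<\beta<\alpha+1$ (strict inequalities ensure subcriticality in $s$ and keep us off the degenerate endpoints).

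Next, restricting the cylinder functional to a single irreducible block $\mathcal H_k$ — on which $\sigma\cdot L$ acts as the scalar $k$ — collapses the problem to a scalar CKN-type inequality on $\R$ with an effective ``angular'' constant, whose sharp form and optimizers are explicit (this is essentially the content of Proposition~\ref{symmetricproblem} for $k=0$, equivalently $k=-2$). The symmetric constant $\mathcal C^\star_{\alpha,\beta}$ is the value on the lowest block $\mathcal H_0$ (or $\mathcal H_{-2}$), and the symmetric optimizer $\phi_\star$ solves the corresponding scalar Euler–Lagrange ODE; by the classical scalar analysis it is $s$-independent precisely on a curve and $s$-dependent off it, but in either case it is known in closed form. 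I would then linearize the full cylinder functional around $\phi_\star$ and test against perturbations lying in a higher block $\mathcal H_k$, $k\ge1$ (the blocks $k\le-2$ being equivalent to $k\ge0$ via $\sigma\cdot\omega$, so nothing new there, while $k=-1$ is absent). This yields, for each $k$, a Schrödinger-type operator on $\R$ of the form $-\partial_s^2 + W_k(s) - \mu_\star$ built from $\phi_\star$; symmetry holds iff all these operators are nonnegative, and symmetry breaking occurs as soon as one of them has a negative eigenvalue. Because $W_k$ is increasing in $k^2$ (the angular eigenvalue enters monotonically), it suffices to check $k=1$: negativity of the bottom eigenvalue of the $k=1$ operator is the sharp instability criterion, and it defines the threshold functions $\overline\alpha_\star(\beta-\alpha)$ and $\overline\alpha^\star(\beta-\alpha)$, with the two branches corresponding to $\alpha<1/2$ and $\alpha>1/2$ (the special value $\alpha=1/2$, where $\gamma=0$, being where the problem changes character, consistent with the values $(-1/2,0]$ and $[1/2,1)$ claimed for the thresholds).

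It then remains to prove the converse in part~(i): that in the symmetric region not only is $\phi_\star$ linearly stable but $\mathcal C_{\alpha,\beta}=\mathcal C^\star_{\alpha,\beta}$ and \emph{every} optimizer is symmetric. Linear stability alone does not give this, so I would run the following argument. Existence provides an optimizer $\phi$; its modulus and regularity (via the Euler–Lagrange equation on the cylinder and elliptic bootstrap away from the singular set) are controlled, and one shows that the ``angular energy'' $\langle\phi,(\sigma\cdot L+c)^2\phi\rangle$ can only lower the Rayleigh quotient, so that projecting onto the configuration where $\sigma\cdot L$ is replaced by its lowest eigenvalue does not increase the functional; equality forces $\phi\in\mathcal H_0\oplus\mathcal H_{-2}$, and a further argument (using that $\sigma\cdot\omega$ swaps these two blocks and that mixing them strictly increases the gradient term unless the $s$-supports are compatible) forces $\phi$ into a single block, i.e.\ symmetric. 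I expect \textbf{this last step — upgrading linear stability to the global identification of all optimizers as symmetric, without a symmetrization tool — to be the main obstacle}; in the scalar case this is exactly where entropy/flow methods (à la~\cite{MR3570296}) are needed, and the spinorial analogue will likely require either a carefully adapted nonlinear flow on the cylinder or a rigidity argument exploiting the $\mathcal H_0\leftrightarrow\mathcal H_{-2}$ duality. Finally, the statement for $\alpha\le-1/2$ is immediate from the involution $(\alpha,\beta)\mapsto(-\alpha-1,\beta-2\alpha-1)$ recalled above, which maps $\mathcal C_{\alpha,\beta}$ and $\mathcal C^\star_{\alpha,\beta}$ to each other and optimizers to optimizers.
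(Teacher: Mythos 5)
Your overall road map (log coordinates on the cylinder, decomposition into $\mathcal H_k$ blocks, linearization at the symmetric optimizer) is the right one and matches the paper's strategy at a high level, but the way you assemble these pieces into a proof of the dichotomy does not work, and you misidentify where the real content lies.

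The main structural gap is your implicit claim that the threshold functions $\overline\alpha_\star,\overline\alpha^\star$ are \emph{defined} as the linear‐instability thresholds for the $k=1$ block, and that linear stability plus a projection argument then yields symmetry. Neither half of this holds here. First, the paper makes no claim that linear stability characterizes the symmetry region; indeed it explicitly states that determining the exact threshold is an open problem and only provides \emph{bounds} ($\alpha_0,\dots,\alpha_5$) on $\alpha_\star,\alpha^\star$. The very existence of threshold functions with the stated one-sided dichotomy (symmetry on one side, breaking on the other, along each fixed-$p$ line) is obtained from a \emph{monotonicity} argument (Proposition~\ref{Prop:alonglines} and Corollary~\ref{Cor:green}): a rescaling $s\mapsto t\,s$ combined with the observation that the angular term $h(k,t)=(1+t)\,k^2+(1-2\alpha)\,k\,t$ is nonnegative on the relevant range shows that if symmetry fails at some $\alpha$ it must fail for all $\alpha'$ farther from $1/2$. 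Your proposal contains nothing that produces this monotonicity, and without it you cannot even assert the existence of a single threshold per line; a priori the symmetry region could be a union of several intervals.

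Second, the step you flag as the main obstacle is indeed where your sketch breaks down, and the ``angular energy can only lower the Rayleigh quotient'' heuristic does not repair it: the effective angular operator $\mathcal K_\alpha=(\sigma\cdot L)^2+(1-2\alpha)\,\sigma\cdot L$ has eigenvalue $k\,(k+1-2\alpha)$ on $\mathcal H_k$, which is \emph{not} monotone in $k$ (it is unbounded in both directions and its minimum over $k\neq 0,-1$ is attained at $k=1$ or $k=-2$ depending on $\alpha$), so there is no clean projection onto a lowest block. The paper instead proves the symmetry region by a different mechanism: a pointwise-in-$\omega$ Keller--Lieb--Thirring bound in the $s$-variable, a new Gagliardo--Nirenberg inequality for spinors on $\Sp$ (Theorem~\ref{thm:BecknerGenSpinor}, involving $\mathcal K_\alpha$), and Minkowski's inequality to glue the two. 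This is a positive rigidity argument with no linear-stability input.

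Finally, two smaller but real errors. (a) You claim that perturbing into blocks $k\le-2$ ``gives nothing new'' because of the $\sigma\cdot\omega$ duality. That duality exchanges $\alpha\leftrightarrow-\alpha-1$; for a \emph{fixed} $\alpha$ with $\phi_\star\in\mathcal H_0$, the blocks $\mathcal H_1$ and $\mathcal H_{-2}$ give genuinely different second-variation operators (the paper's $\mathsf q[1,1/2;\alpha,p]$ vs. $\mathsf q[-2,1/2;\alpha,p]$), and in part of the range $(-1/2,0)$ the $k=-2$ channel is the one that destabilizes. (b) The second variation is only real-linear, not complex-linear (the $\big(\mathrm{Re}\lrangle{\phi_\star}{\varphi}\big)^2$ term), and the coefficients $\delta_k^m=\int_{\Sp}|\mathrm{Re}\lrangle{\chi_0}{\chi_k^m}|^2\,d\omega$ enter nontrivially; your sketch ignores both, so the ``Schr\"odinger operator $-\partial_s^2+W_k-\mu_\star$'' you write down is not the correct linearization.
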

Theorem~\ref{Cor:main} is only a qualitative version of what we will prove. Namely, we will give quantitative upper and lower bounds on the functions $\overline\alpha_\star$ and $\overline\alpha^\star$; see Remark~\ref{translation}.
A graphical representation of what we prove can be seen in Figure~\ref{Fig:SCKN}.

Already on a qualitative level our results concerning the spinorial inequality~\eqref{SCKN} are quite different from the results concerning the scalar inequality~\eqref{CKN}. In the spinorial case, each line $\beta-\alpha\equiv\mathrm{const}\in(0,1)$ intersects the symmetry region in two bounded intervals, while such an intersection by a line $b-a\equiv\mathrm{const}\in(0,1)$ occurs only in a single interval in the scalar case (up to the case $a=1/2$ which is trivial because $C_{1/2,b,3}=0$). Another difference is that in the spinorial case symmetry can occur only for $-\,2\leq\alpha\leq1$, while in the scalar case it can occur for arbitrarily large $a\in\R$. We also note that in the spinorial case the figure is symmetric with respect to $\alpha=-\,1/2$, while it is symmetric with respect to $a=+1/2$ in the scalar case. Our results are summarized in Figure~\ref{Fig:SCKN}.\clearpage

\begin{figure}[ht]
\includegraphics[width=12cm]{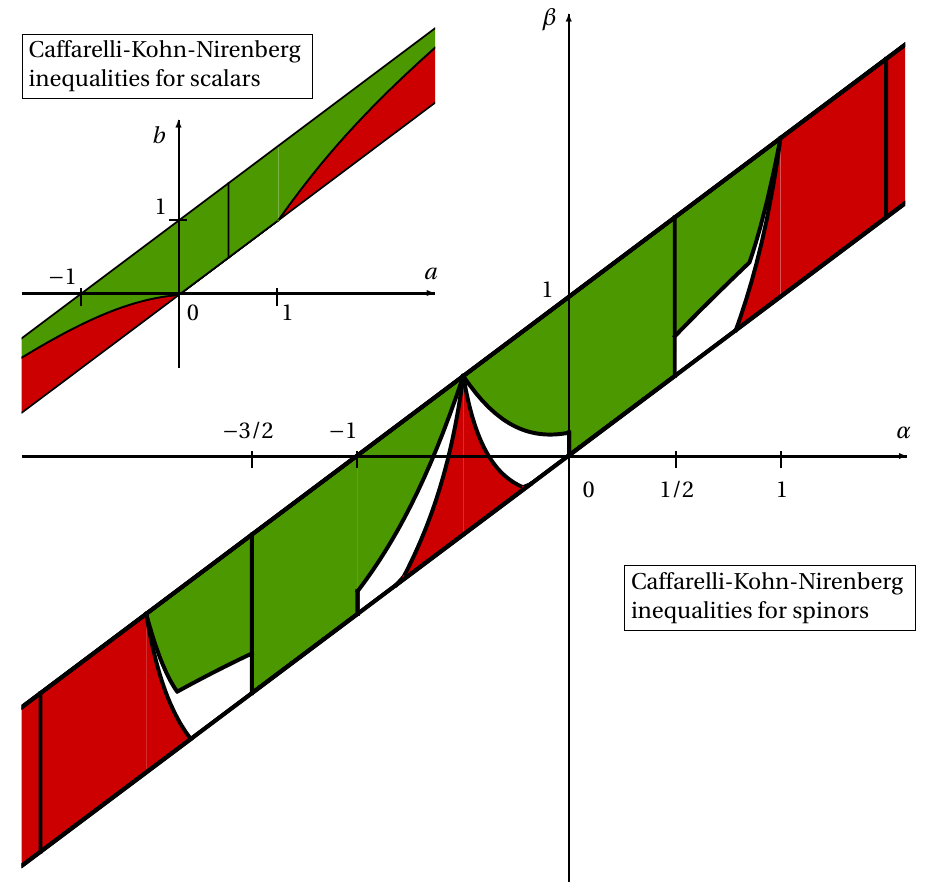}
\caption{\small\label{Fig:SCKN} Symmetry regions are shown in \greenbw\ and symmetry breaking regions appear in \redbw. The upper left corner represents the parameter domain for~\eqref{CKN} Inequalities (scalar functions), while the main figure is the parameter domain for~\eqref{SCKN} Inequalities (spinors). In the latter case, the threshold between symmetry and symmetry breaking is determined by the functions $\overline\alpha_\star$ and $\overline\alpha^\star$ which take values in the white areas of the strip $\alpha\le\beta\le\alpha+1$.}
\end{figure}

\section{Passing to logarithmic variables and main result (II)}

Our goal in this section is to bring Inequality~\eqref{SCKN} into an equivalent form and state a more precise version of Theorem~\ref{Cor:main} in this equivalent reformulation.

A crucial feature of Inequality~\eqref{SCKN}, just like of~\eqref{CKN}, is its scale invariance. In logarithmic coordinates, this invariance becomes translation invariance, and these coordinates have been useful in the study of Inequality~\eqref{CKN}. Correspondingly, we will now state an equivalent form of Inequality~\eqref{SCKN} in logarithmic coordinates.
\begin{lemma}\label{Lem:chvar} If we set $s =\log r$, $r = |x|\neq0$, $\omega = x/r$ and define a function $\phi$ on $\R\times\Sp$ by $\psi(x) = r^{\alpha -1/2}\,\phi(s,\omega)$, then Inequality~\eqref{SCKN} becomes
\be{logarithmicSCKN}\tag{SCKN$_{\rm log}$}
\ic{\(|\partial_s\phi|^2+\big|\big(\sigma\cdot L-\alpha+\tfrac12\big)\,\phi\big|^2\)}\ge C_{\alpha,p}\(\ic{|\phi|^p}\)^{2/p}.
\ee
The optimal constant is $C_{\alpha,p}=(4\,\pi)^{(2-p)/p}\,\mathcal C_{\alpha,\beta}$, where $\alpha$, $\beta$ and $p$ are related by~\eqref{param}.
\end{lemma}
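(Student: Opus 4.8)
The plan is to carry out the change of variables directly; the only structural ingredient is the factorisation of the Pauli operator into a radial and a spherical part. Writing $r=|x|$ and $\omega=x/r$, the algebraic identity $(\sigma\cdot a)(\sigma\cdot b)=a\cdot b+i\,\sigma\cdot(a\wedge b)$ applied with $a=\omega$ and $b=\nabla$, together with $(\sigma\cdot\omega)^2=1$, gives
\[
\sigma\cdot\nabla=(\sigma\cdot\omega)\Big(\partial_r-\tfrac1r\,\sigma\cdot L\Big),
\]
where $L$ acts only on the angular variables. As consistency checks, this factorisation correctly yields $\sigma\cdot\nabla=0$ on constant spinors (which span $\mathcal H_0$, the eigenspace of $\sigma\cdot L$ with eigenvalue $0$) and $\sigma\cdot\nabla\big((\sigma\cdot x)\,\chi_0\big)=3\,\chi_0$ (using that $(\sigma\cdot\omega)\,\chi_0$ spans $\mathcal H_{-2}$, on which $\sigma\cdot L=-2$).

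Next, I would insert the ansatz $\psi(x)=r^{\alpha-1/2}\,\phi(s,\omega)$ with $s=\log r$, using $\partial_r\big(r^{\alpha-1/2}\phi\big)=r^{\alpha-3/2}\big((\alpha-\tfrac12)\phi+\partial_s\phi\big)$. A short computation then gives
\[
\sigma\cdot\nabla\psi=(\sigma\cdot\omega)\,r^{\alpha-3/2}\Big(\partial_s\phi-\big(\sigma\cdot L-\alpha+\tfrac12\big)\phi\Big);
\]
the weight $r^{\alpha-1/2}$ is the unique one for which the Dirichlet term becomes invariant under translations in $s$, which is the whole point of the substitution. Since $\sigma\cdot\omega$ is unitary on $\C^2$ for $|\omega|=1$, taking $|\cdot|^2$, dividing by $|x|^{2\alpha}$, and using $dx=4\pi\,r^2\,dr\,d\omega$ together with $dr/r=ds$, one obtains
\[
\ird{\frac{|\sigma\cdot\nabla\psi|^2}{|x|^{2\alpha}}}=4\pi\ic{\Big|\partial_s\phi-\big(\sigma\cdot L-\alpha+\tfrac12\big)\phi\Big|^2}.
\]
For the right-hand side of~\eqref{SCKN}, the same substitution turns $|x|^{-\beta p}|\psi|^p$ into $r^{(\alpha-1/2-\beta)p}|\phi|^p$, and after inserting $dx=4\pi\,r^2\,dr\,d\omega$ the power of $r$ equals $-1$ precisely when $(\alpha-\tfrac12-\beta)\,p+2=-1$, i.e.\ when $p=\tfrac{6}{1-2\alpha+2\beta}$ as in~\eqref{param}; then $\ird{|x|^{-\beta p}|\psi|^p}=4\pi\ic{|\phi|^p}$.

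It remains to eliminate the cross term, and this is the only step requiring an argument. Set $B:=\sigma\cdot L-\alpha+\tfrac12$, which is bounded and self-adjoint on $\mathrm L^2(\Sp,\C^2;d\omega)$ and independent of $s$. For fixed $s$ one has $2\,\Re\,\lrangle{\partial_s\phi}{B\phi}_{\mathrm L^2(\Sp)}=\frac{d}{ds}\lrangle{\phi}{B\phi}_{\mathrm L^2(\Sp)}$, so its integral over $s\in\R$ is a boundary term; it vanishes outright for $\phi$ smooth with compact $s$-support, and the general case follows by density. Hence $\ic{|\partial_s\phi-B\phi|^2}=\ic{\big(|\partial_s\phi|^2+|B\phi|^2\big)}$, and~\eqref{SCKN} becomes
\[
4\pi\ic{\big(|\partial_s\phi|^2+|B\phi|^2\big)}\ \ge\ \mathcal C_{\alpha,\beta}\,(4\pi)^{2/p}\Big(\ic{|\phi|^p}\Big)^{2/p}.
\]
Dividing by $4\pi$ yields~\eqref{logarithmicSCKN} with $C_{\alpha,p}=(4\pi)^{2/p-1}\,\mathcal C_{\alpha,\beta}=(4\pi)^{(2-p)/p}\,\mathcal C_{\alpha,\beta}$; and since $\psi=r^{\alpha-1/2}\phi$ is a bijective correspondence between the functions admissible for the two inequalities, under which both integrals transform as above, the optimal constants are related exactly in this way.

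Every computation here is elementary, so the work is essentially bookkeeping, and I expect the only genuinely delicate points to be: getting the spherical factorisation right (in particular, that there is no extra additive constant, which the two consistency checks confirm); making the vanishing of the cross term rigorous on the relevant function space rather than merely formal; and tracking the normalisation factors $4\pi$. As a side benefit, in the $\phi$-variables the transformation $\psi\mapsto\widetilde\psi$ mentioned in the introduction becomes simply $\phi\mapsto(\sigma\cdot\omega)\,\phi$ together with $\alpha\mapsto-\alpha-1$, and the anticommutation relation~\eqref{commutation} gives $(\sigma\cdot\omega)\big(\sigma\cdot L-\alpha+\tfrac12\big)(\sigma\cdot\omega)=-\big(\sigma\cdot L-(-\alpha-1)+\tfrac12\big)$, which makes the stated invariances transparent.
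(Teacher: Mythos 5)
Your proof is correct and follows essentially the same route as the paper: the paper first proves Proposition~\ref{basiccomp} in $r$-coordinates, using exactly the Thaller factorisation $\sigma\cdot\nabla=(\sigma\cdot\omega)\big(\partial_r-\tfrac1r\sigma\cdot L\big)$ and killing the cross term by self-adjointness of $\sigma\cdot L$ plus integration by parts, and then substitutes $s=\log r$ in the proof of Lemma~\ref{Lem:chvar}; you merge the two steps into one direct computation, which is fine. One small slip in your write-up: $\sigma\cdot L$, and hence $B=\sigma\cdot L-\alpha+\tfrac12$, is \emph{not} bounded on $\mathrm L^2(\Sp,\C^2;d\omega)$ (its spectrum is all of $\Z\setminus\{-1\}$); boundedness is not actually used in your argument, which only needs that $B$ be symmetric on its form domain together with the density step you already invoke, so the conclusion stands.
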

We parametrize the constant in~\eqref{logarithmicSCKN} by $\alpha$ and $p$, because $\beta$ does not appear explicitly in this inequality. The parameters under consideration are now $\alpha\in\R$ and $p\in[2,6]$. According to Proposition~\ref{validity} we have $C_{\alpha,p}>0$ if and only if $\alpha\in\R\setminus\Lambda$.

In logarithmic coordinates the before-mentioned symmetry between $(\alpha,\beta)$ and $(-\alpha-1,\beta-2\,\alpha-1)$ can be seen as follows. For an arbitrary function $\phi$, using~\eqref{commutation} and setting $\eta=(\sigma\cdot\omega)\,\phi$,
\eqref{logarithmicSCKN} takes the form
\begin{equation} \label{logarithmicalt}
\ic{\(|\partial_s\eta|^2+\big|\big(\sigma\cdot L+\alpha+\tfrac32\big)\,\eta\big|^2\)}\ge C_{\alpha,p}\(\ic{|\eta|^p}\)^{2/p}.
\end{equation}
This puts into evidence that
\[
C_{-\,(\alpha+1),p}=C_{\alpha,p}\,.
\]
An optimizer for~\eqref{logarithmicSCKN} is transformed by $\phi\mapsto(\sigma\cdot\omega)\,\phi$ into an optimizer for~\eqref{logarithmicalt} and reciprocally because $(\sigma\cdot\omega)^2=1$.

The concept of symmetry translates as follows to logarithmic coordinates.
\begin{definition}\label{symmetric} A spinor $\phi$ on $\R\times\Sp$ is \emph{symmetric} if there is a scalar, complex-valued function $u$ on $\R$ and a constant spinor $\chi_0\in\C^2$ such that either
$$
\phi(s,\omega) = u(s) \, \chi_0
\quad\text{for a.e.}\ (s,\omega)\in\R\times\Sp \,,
$$
or
$$
\phi(s,\omega) = u(s) \, \sigma\cdot\omega \, \chi_0
\quad\text{for a.e.}\ (s,\omega)\in\R\times\Sp \,.
$$
We call these symmetric spinors of type $\mathcal H_0$ and $\mathcal H_{-2}$, respectively.
\end{definition}
Let $C^\star_{\alpha,p} = (4\,\pi)^{(2-p)/p}\,\mathcal C^\star_{\alpha,\beta}$ be the best constant in~\eqref{logarithmicSCKN} restricted to the space of symmetric functions $\phi$ as in Definition~\ref{symmetric}, for $\alpha\in\R\setminus\Lambda$. Computing $C^\star_{\alpha,p}$ and identifying the equality cases can be reduced to the case of a Gagliardo-Nirenberg interpolation inequality on~$\R$: see Appendix~\ref{App00}. Up to translation and multiplication by a constant, any optimizer among symmetric spinors satisfies the form of Definition~\ref{symmetric} with
\be{eq:minsymm}
u(s)=\big(\cosh(s/s_0)\big)^{-2/(p-2)}\quad\forall\,s\in\R\,,
\ee
for some $s_0>0$, which depends only on $\alpha$ and $p$. Moreover, symmetric spinors are of type $\mathcal H_0$ if $\alpha>-1/2$ and of type $\mathcal H_{-2}$ if $\alpha<-1/2$. Note that this is consistent with the symmetry between $(\alpha,p)$ and $(-\alpha-1,p)$, since the map $\phi\mapsto(\sigma\cdot\omega)\,\phi$ maps $\mathcal H_0$ to $\mathcal H_{-2}$.

\medskip\noindent The question raised above becomes:\\
\centerline{\emph{For which $\alpha$ and $p$ are the global optimizers for Inequality~\eqref{logarithmicSCKN} symmetric ?}}

\noindent Let us define the symmetry and symmetry breaking regions as subsets of $\mathbb R \times (2,6) \ni (\alpha,p) $ where the following holds: symmetry breaking means $C_{\alpha,p}<C^\star_{\alpha,p}$ while symmetry means $C_{\alpha,p}=C^\star_{\alpha,p}$ and all optimizers are then symmetric. In strong contrast with the scalar case (see Fig.~\ref{Fig:SCKN}), our main result goes as follows.
\begin{theorem}\label{Thm:main}
There are optimizers for~\eqref{logarithmicSCKN} for all $\alpha\in\R\setminus\Lambda$ and $p\in(2,6)$. Moreover, there are two functions $\alpha_\star$ and $\alpha^\star$ on $(2,6)$, taking values respectively in $(-1/2,0]$ and $[1/2,1)$, such that the following holds for all $\alpha\in[-1/2,\infty)$ and $2<p<6$:
\begin{enumerate}
\item[\rm(i)] symmetry holds if either $\alpha_\star(p)<\alpha<1/2$ or $1/2<\alpha<\alpha^\star(p)$,
\item[\rm(ii)] symmetry breaking holds if either $-\,1/2\le\alpha<\alpha_\star(p)$, or $\alpha>\alpha^\star(p)$.
\end{enumerate}
Symmetry and symmetry breaking regions are invariant under the reflection with respect to $\alpha=-\,1/2$, i.e., the transformation $(\alpha,p)\mapsto\big(-(\alpha+1),p\big)$, so that the range $\alpha<-1/2$ is also covered. The functions $\alpha_\star$ and $\alpha^\star$ are such that
\begin{multline*}
-\,\frac12\le\max\left\{\alpha_1(p),\alpha_2(p)\right\}\le\alpha_\star(p)\le\min\left\{0,\alpha_0(p)\right\}\\\mbox{and}\quad\min\left\{\alpha_3(p),\alpha_4(p)\right\}\le\alpha^\star(p)\le\alpha_5(p)
\end{multline*}
where
\begin{align*}
&\alpha_0(p):=\tfrac{4-\sqrt{-3\,p^2+20\,p-12}}{2\,(p-2)}\,,\quad&
&\alpha_1(p):=\tfrac{2\,p^2-7\,p-6-\sqrt{3\,(19\,p^2-20\,p+12)}}{4\,p\,(p-2)}\,,\\
&\alpha_2(p):=\tfrac{p^2-2\,p+24-2\,\sqrt{3\,(11\,p^2-4\,p+12)}}{2\,(p-2)\,(p+6)}\,,\quad&
&\alpha_3(p):=\tfrac{6-p}{2\,(p-2)}\,,\\
&\alpha_4(p):=\tfrac{3\,p-10+\sqrt{-3\,p^2+20\,p-12}}{4\,(p-2)}\,,\quad&
&\alpha_5(p):=\tfrac{2\,p^2+5\,p-30+\sqrt{3\,(19\,p^2-20\,p+12)}}{4\,(p-2)\,(p+3)}\,.
\end{align*}
If $\alpha \in(\alpha_\star(p),1/2)\cup(1/2,\alpha^\star(p))$, then $C_{\alpha,p}=C_{\alpha,p}^\star$ and all optimizers are symmetric.
\end{theorem}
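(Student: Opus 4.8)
The plan is to establish existence of optimizers first, then analyze symmetry via a two-sided strategy: a variational/concentration argument to prove symmetry in a region, and a linear-instability (second-variation) argument to prove symmetry breaking in the complementary region. Throughout I would work in the logarithmic formulation \eqref{logarithmicSCKN}, where scale invariance has become translation invariance in $s$, and I would freely use the reflection symmetry $(\alpha,p)\mapsto(-(\alpha+1),p)$ to reduce to $\alpha\ge-1/2$.

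First I would prove existence of optimizers for $\alpha\in\R\setminus\Lambda$ and $2<p<6$. The natural tool is concentration-compactness applied to a minimizing sequence for the quotient in \eqref{logarithmicSCKN}; translation invariance in $s$ is handled as usual by recentering, and the subcriticality $p<6$ rules out the vanishing and dichotomy scenarios, so one gets a nonzero weak limit which, by weak lower semicontinuity of the left-hand side and the (non-compact but translation-covariant) structure, is an optimizer. One must check that the quadratic form $\int\!\!\int(|\partial_s\phi|^2+|(\sigma\cdot L-\alpha+1/2)\phi|^2)$ is coercive on the relevant space, which is exactly where $\alpha\notin\Lambda$ enters — this is Proposition~\ref{validity}, giving $C_{\alpha,p}>0$ and hence a positive spectral gap in each angular sector $\mathcal H_k$ except the forbidden ones. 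I would also record that any optimizer, after the SU(2) action and translation, can be assumed to have a definite behavior in the angular variable.

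For the symmetry breaking part (ii) I would compute the second variation of the functional
$\mathcal J[\phi]:=\frac{\int\!\!\int(|\partial_s\phi|^2+|(\sigma\cdot L-\alpha+1/2)\phi|^2)}{(\int\!\!\int|\phi|^p)^{2/p}}$
at a symmetric optimizer $\phi_\star=u(s)\,\chi_0$ (or $u(s)\,\sigma\cdot\omega\,\chi_0$), with $u$ given by \eqref{eq:minsymm}, and test it against perturbations lying in the angular sectors $\mathcal H_k$ with $k\ne0$ (resp.\ $k\ne-2$). Because $\sigma\cdot L$ acts as a scalar $k$ on $\mathcal H_k$, the second variation restricted to such a perturbation $\delta\phi=v(s)\,Y_k(\omega)$ reduces to a one-dimensional Schrödinger-type quadratic form
$\int_\R(|v'|^2+(k-\alpha+1/2)^2|v|^2)\,ds-(p-1)\,\mu\int_\R u^{p-2}|v|^2\,ds$ (with $\mu$ a Lagrange multiplier fixed by $\phi_\star$), whose bottom eigenvalue is explicitly computable: the potential $u^{p-2}=(\cosh(s/s_0))^{-2}$ is a Pöschl–Teller potential, so the lowest eigenvalue of $-\partial_s^2-(\text{const})\,\mathrm{sech}^2(s/s_0)$ is known in closed form. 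Symmetry breaking occurs precisely when, for some admissible $k$, the constant $(k-\alpha+1/2)^2$ falls below this threshold; optimizing over $k$ (the dangerous values are $k=1$ and $k=-1$ for type $\mathcal H_0$, giving the two branches) and solving the resulting quadratic inequalities in $\alpha$ produces the bounds $\alpha\le\max\{\alpha_1(p),\alpha_2(p)\}$ and $\alpha\ge\alpha_5(p)$ that appear in the statement. This part is essentially algebra once the Pöschl–Teller eigenvalue is in hand.

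The hard part is the positive symmetry result (i): proving $C_{\alpha,p}=C^\star_{\alpha,p}$ with \emph{all} optimizers symmetric, in a genuine open region, \emph{without any symmetrization technique}. My approach would be spectral and perturbative around the symmetric solution, combined with a rigidity argument for the Euler–Lagrange equation $-\,\partial_s^2\phi+(\sigma\cdot L-\alpha+1/2)^2\phi=\mu\,|\phi|^{p-2}\phi$. Decompose an arbitrary optimizer into its angular sectors; in the non-symmetric sectors, one wants to show the contribution must vanish. The strategy is: (a) show the symmetric problem's optimizer is a \emph{strict} local minimum in the non-symmetric directions — this is the reverse of the second-variation computation above and gives the lower bounds involving $\alpha_0,\alpha_3,\alpha_4$; (b) promote this local statement to a global one. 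Step (b) is the real obstacle, and I expect it to require either a continuation/uniqueness argument along the parameter $p$ (knowing symmetry holds near $p=2$, where the inequality is close to a linear/Hardy-type statement, and tracking the threshold), or a carefully designed test-function/flow argument. Given the paper's abstract emphasizes "spectral methods" and quantitative bounds, I anticipate the authors combine the second-variation thresholds with an a priori estimate forcing any optimizer to lie in a neighborhood where the strict local minimality applies — most likely by first proving that optimizers of \eqref{logarithmicSCKN} satisfy enough regularity and decay to be compared directly with $\phi_\star$ through a Bochner-type or commutator identity using \eqref{commutation}. The interval endpoints $\alpha_\star,\alpha^\star$ then remain undetermined between the explicit inner and outer bounds precisely because the local-to-global step is lossy.
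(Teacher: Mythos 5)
Your plan gets the existence part right (concentration--compactness / Lions recentering, exactly as in Section~\ref{Sec:existence}) and the symmetry-breaking part broadly right: the paper's Section~\ref{Sec:symmetrybreaking} does compute the second variation at $\phi_\star$, reduces sector by sector to 1D P\"oschl--Teller operators, and derives $\alpha_1,\alpha_2,\alpha_5$ from the sign of the lowest eigenvalue. Two small corrections there: the relevant unstable sectors are $k=1$ and $k=-2$, not $k=1$ and $k=-1$ ($k=-1$ is not in the spectrum of $\sigma\cdot L$), and one must additionally track the spin quantum number $m$ inside $\mathcal H_k$ because the potential in the reduced 1D problem carries a factor $\delta_k^m=\int_{\Sp}|\Re\langle\chi_0,\chi_k^m\rangle|^2\,d\omega$ (the Hessian is real-linear but not complex-linear, so one cannot simply drop the real-part structure).

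The genuine gap is in the positive symmetry result, and it is exactly where you say you would need to ``promote the local statement to a global one.'' That local-to-global step is not what the paper does, and I do not see how the strategy you sketch (strict local minimality near $\phi_\star$ plus an a priori estimate forcing optimizers into a neighborhood of $\phi_\star$, or a continuation from $p=2$) could be made to work without essentially new ideas. The paper sidesteps the problem by two \emph{direct, global} arguments. First (Section~\ref{Sec:firstsymmetry}): for $\alpha=0$ one proves $\mathcal C_{0,\beta}=\mathcal C^\star_{0,\beta}$ by applying the scalar \eqref{CKN} inequality componentwise and then Minkowski's inequality in $\mathrm L^{p/2}$; this is then propagated in $\alpha$ by a scaling/monotonicity argument (Proposition~\ref{Prop:alonglines}, in the spirit of Maz'ya--Shaposhnikova and Dolbeault--Esteban--Loss--Tarantello) which compares $\mathcal G_{\alpha_t}[\phi_t]$ with $t^{1+2/p}\mathcal G_\alpha[\phi]$ along the family $\phi_t(s,\omega)=\phi(ts,\omega)$, $\alpha_t=\tfrac12-(\tfrac12-\alpha)t$, and shows that symmetry breaking is monotone along $\alpha$-lines. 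This yields $\alpha_\star(p)\le 0$. Second (Section~\ref{Sec:secondsymmetry}): adapting~\cite{MR2905786}, one bounds $\mathcal F_\alpha[\phi]$ from below by combining (a) the Keller--Lieb--Thirring bound for the lowest eigenvalue of $-\partial_s^2-|\phi(\cdot,\omega)|^{p-2}$ for each fixed $\omega$, (b) a \emph{new} Gagliardo--Nirenberg-type interpolation inequality for spinors on $\Sp$ for the operator $\mathcal K_\alpha=(\sigma\cdot L)^2+(1-2\alpha)\sigma\cdot L$ (Theorem~\ref{thm:BecknerGenSpinor}), and (c) Minkowski's inequality to glue the $s$- and $\omega$-integrals; the resulting chain $0=\mathcal F_\alpha[\phi]\ge\mathcal E[u]\ge0$ forces equality throughout and hence symmetry, in a parameter range that produces the $\alpha_0,\alpha_3,\alpha_4$ bounds. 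Both arguments are global from the start, so they do not require the missing local-to-global bridge that your proposal would need, and both rest on concrete inequalities (the scalar CKN result, Minkowski, Keller--Lieb--Thirring, and the spinorial Beckner/Bidaut--V\'eron--V\'eron inequality) rather than a Bochner or rigidity identity.
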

\noindent For $\alpha=\alpha_\star(p)$ or $\alpha=\alpha^\star(p)$, then we have $C_{\alpha,p}=C_{\alpha,p}^\star$ and there is a symmetric optimizer. Whether in this case \emph{all} optimizers are symmetric or not is an open question.
In Fig.~\ref{Fig:SCKNlog} below, we synthetize the results of Theorem~\ref{Thm:main}.
\begin{figure}[hb]
\includegraphics[width=14cm]{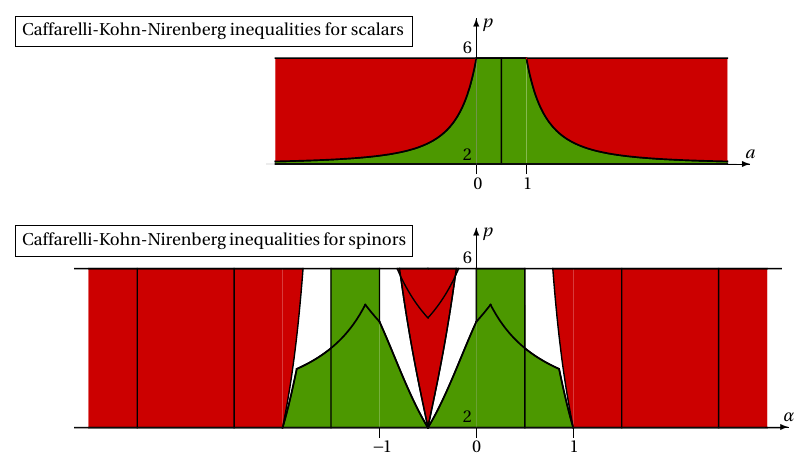}
\caption{\small\label{Fig:SCKNlog} Inequality~\eqref{logarithmicSCKN} for spinors and, for sake of comparison, its counterpart for scalars. The threshold between symmetry and symmetry breaking is determined by the functions $p\mapsto\alpha_\star(p)$ and $p\mapsto\alpha^\star(p)$ which take values in the white areas of the strip $2\le p\le6$. The curves corresponding to the \emph{ansatz} for symmetry breaking and those which determine the symmetry regions will be made clear in the proofs; also see Figs.~\ref{Fig:SCKNlog-Detail-1} and~\ref{Fig:SCKNlog-Detail-2}.}
\end{figure}

\begin{remark}\label{translation}
The equivalence between~\eqref{SCKN} and~\eqref{logarithmicSCKN} shows that Theorem~\ref{Cor:main} is equivalent to the first part of Theorem~\ref{Thm:main} via
$$
\overline\alpha_\star(\beta-\alpha) = \alpha_\star\(\frac{6}{1-2\,\alpha+2\,\beta}\) \,,
\quad
\overline\alpha^\star(\beta-\alpha) = \alpha^\star\(\frac{6}{1-2\,\alpha+2\,\beta}\) \,.
$$
This allows us to translate the bounds on $\alpha_\star$ and $\alpha^\star$ into bounds on $\overline\alpha_\star$ and $\overline\alpha^\star$ with the drawback that expressions are more complicated when expressed in terms of $\alpha$ and $\beta$.
\end{remark}

The reader familiar with the large literature on the sharp constant in~\eqref{CKN} will probably recognize the various strategies that we employ to prove symmetry or symmetry breaking. Looking closely at the details of how these strategies are implemented here, however, one finds substantial differences and, as a general rule, the present spinorial case is considerably more involved than the scalar case. A posteriori, this is not surprising, of course, given the more complicated shape of~\eqref{SCKN} compared to~\eqref{CKN}.

More specifically, in Section~\ref{Sec:firstsymmetry} we employ a monotonicity argument that is similar in spirit to an argument that appears in~\cite{MR2508844,MR2560127}. In contrast to the scalar case, when applying this argument for $\alpha\geq -1/2$ one has to distinguish the cases where $\alpha<1/2$ and $\alpha>1/2$. In Section~\ref{Sec:secondsymmetry} we employ an argument that is based on Gagliardo-Nirenberg interpolation inequalities on the sphere, inspired by~\cite{MR2905786}. The corresponding Gagliardo-Nirenberg inequality is new and might be interesting on its own. When adapting the strategy from~\cite{MR2905786} we overcome the difficulties from the spinorial nature through a judicious application of Minkowski's inequality. Finally, in Section~\ref{Sec:symmetrybreaking} we derive a symmetry breaking region by analyzing the stability of symmetric solutions in the spirit of~\cite{MR1973285}. Here again a subtlety appears, since the spinorial inequality~\eqref{SCKN} is intrinsically complex and the second variation operator is real-linear but not complex-linear. We manage to find a nontrivial symmetry breaking region, but in contrast to the scalar case, we are not able to determine the complete instability region\footnote{See the footnote in Remark~\ref{Rem:destabilization}.}.

One reason for these differences between the spinorial and the scalar case is the following. In logarithmic variables we find the angular operator $(\sigma\cdot L - \alpha +1/2)^2$; see Lemma~\ref{Lem:chvar}. Normalizing this operator so that it vanishes on constant spinors, we arrive at the effective angular operator $\mathcal K_\alpha =(\sigma\cdot L)^2 + (1-2\,\alpha)\,\sigma\cdot L$. Repeating the same in the scalar case we arrive first at the operator $L^2 +(\alpha -1/2)^2$ and then, after normalization, at the effective angular operator $\mathcal K^{\rm scalar} = L^2$, which \emph{is independent of $\alpha$}. Not only does $\mathcal K_\alpha$ depend on $\alpha$, but this dependence is also non-monotone, since $\sigma\cdot L$ is unbounded both from above and from below. Dealing with the $\alpha$-dependence of the effective angular operator is the cause of many of the difficulties that we face in this work and is one of the main new obstacles that we overcome in this paper.

Despite these differences, however, it is perhaps an interesting observation that the strategies that have previously been employed for the scalar~\eqref{CKN} problem are not entirely linked to the specifics of the problem. They are more robust than one might have imagined and may have a wider applicability.

Needless to say that the results obtained in this paper are not sharp in the sense that they do not cover the full parameter regime. Giving a complete description of the symmetry and symmetry breaking regions is an \emph{open problem}.

\medskip The paper is organized as follows. In Section~\ref{Sec:Hardy} we discuss the reformulation in logarithmic coordinates and consider, in particular, the limit case $p=2$, where Inequality~\eqref{SCKN} becomes a Hardy-like inequality for which the best constant can be computed explicitly. Existence of optimizers is proved if $\alpha<\beta<\alpha+1$ in Section~\ref{Sec:existence}. Sections~\ref{Sec:firstsymmetry} and~\ref{Sec:secondsymmetry} provide us with symmetry results for~\eqref{logarithmicSCKN}, while in Section~\ref{Sec:symmetrybreaking} we find regions where the symmetric solutions are linearly unstable. The proof of Theorem~\ref{Thm:main} is established in Section~\ref{Sec:proofmain}.

\medskip\noindent{\bf Notation.} We denote the canonical inner product of $\psi_1$, $\psi_2\in\C^2$ both by $\lrangle{\psi_1}{\psi_2}$ and by $\psi_1^*\,\psi_2$, according to typographical convenience. The corresponding norm is denoted by $|\psi| = \sqrt{\langle\psi,\psi\rangle}= \sqrt{\psi^*\psi}$.

\section{Changes of coordinates and preliminary estimates}\label{Sec:Hardy}

We begin with a computation that is crucial for much that follows.
\begin{proposition}\label{basiccomp}
For $\psi\in\mathcal D_{\alpha,\beta}$, setting $\psi(x)=|x|^{\alpha-1/2}\,\Phi(x)$, we have
\be{Id:basiccomp}
\ird{\frac{|\sigma\cdot\nabla\psi|^2}{|x|^{2\,\alpha}}}
= \ird{\frac{|\partial_r\Phi|^2}{|x|}}+\ird{\frac{\left|\(\sigma\cdot L-\alpha+\tfrac12\)\Phi\,\right|^2}{|x|^2}}\,.
\ee
\end{proposition}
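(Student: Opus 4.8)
The idea is to write $\sigma\cdot\nabla$ in polar coordinates, insert the ansatz $\psi=|x|^{\alpha-1/2}\,\Phi$, expand $|\sigma\cdot\nabla\psi|^2$, and check that the cross term produced by the expansion integrates to zero.

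Concretely, I would first record the polar factorization of the Pauli operator. The algebraic identity $(\sigma\cdot a)\,(\sigma\cdot b)=a\cdot b+i\,\sigma\cdot(a\wedge b)$ applied with $a=\omega=x/|x|$ and $b=-i\,\nabla$, together with $(\sigma\cdot\omega)^2=1$, $\omega\cdot(-i\,\nabla)=-i\,\partial_r$ and $\omega\wedge(-i\,\nabla)=L$, yields
\[
\sigma\cdot\nabla=(\sigma\cdot\omega)\bigl(\partial_r-\sigma\cdot L\bigr),
\]
where $\sigma\cdot L$ differentiates only in the angular variables and hence commutes with multiplication by radial functions. Substituting $\psi=|x|^{\alpha-1/2}\,\Phi$ and applying the Leibniz rule to $\partial_r$ gives, after collecting powers of $|x|$,
\[
\sigma\cdot\nabla\psi=(\sigma\cdot\omega)\Bigl(|x|^{\alpha-1/2}\,\partial_r\Phi-|x|^{\alpha-3/2}\,\bigl(\sigma\cdot L-\alpha+\tfrac12\bigr)\Phi\Bigr).
\]
Because $\sigma\cdot\omega$ is pointwise unitary it drops out of $|\sigma\cdot\nabla\psi|^2$; expanding the square of the bracket, dividing by $|x|^{2\alpha}$, and integrating over $\R^3$, the two diagonal terms give the two integrals on the right-hand side of~\eqref{Id:basiccomp}, and the remaining mixed term equals
\[
-\,2\ird{\frac{1}{|x|^2}\,\Re\,\lrangle{\partial_r\Phi}{\bigl(\sigma\cdot L-\alpha+\tfrac12\bigr)\Phi}}.
\]

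It then remains to see that this cross term vanishes. Since $\sigma\cdot L-\alpha+\tfrac12$ is self-adjoint on $\mathrm L^2(\Sp,\C^2;d\omega)$ and acts only on the angular variables, it commutes with $\partial_r$, so that for each fixed radius $r=|x|$
\[
2\,\Re\is{\lrangle{\partial_r\Phi}{\bigl(\sigma\cdot L-\alpha+\tfrac12\bigr)\Phi}}=\frac{d}{dr}\is{\lrangle{\Phi}{\bigl(\sigma\cdot L-\alpha+\tfrac12\bigr)\Phi}};
\]
when $dx$ is written in polar coordinates the weight $|x|^{-2}$ cancels the Jacobian, and the cross term collapses to a boundary term at $r=0$ and $r=+\infty$, which is zero. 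The step I expect to require the most care is to justify all of this for an arbitrary $\psi\in\mathcal D_{\alpha,\beta}$ rather than for smooth $\psi$: elements of $\mathcal D_{\alpha,\beta}$ need not decay pointwise, so neither the splitting of $|\sigma\cdot\nabla\psi|^2/|x|^{2\alpha}$ into individually integrable pieces nor the vanishing of the endpoint contributions is automatic. I would deal with this by first establishing~\eqref{Id:basiccomp} for $\psi\in C_c^\infty(\R^3\setminus\{0\};\C^2)$, where every manipulation above is plainly legitimate, and then passing to the general case by density together with continuity of both sides of~\eqref{Id:basiccomp} in the relevant norm. The algebra producing the two diagonal terms, by contrast, is entirely routine.
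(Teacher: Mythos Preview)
Your proof is correct and takes essentially the same route as the paper—both rest on the polar factorization $\sigma\cdot\nabla=(\sigma\cdot\omega)\bigl(\partial_r-\tfrac{1}{r}\,\sigma\cdot L\bigr)$ and the vanishing of the mixed radial--angular term, though you streamline matters by applying the factorization directly to $\psi$ rather than first expanding and then invoking it for $\Phi$. One small slip: your displayed factorization is missing the $1/r$ in front of $\sigma\cdot L$ (since $\omega\wedge(-i\nabla)=\tfrac{1}{r}L$ with $L:=x\wedge(-i\nabla)$), but your subsequent formula for $\sigma\cdot\nabla\psi$ is correct regardless.
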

\bigskip\noindent Here we use the notation $r=|x|$ and $\partial_r\psi=\frac xr\cdot\nabla\psi$. On $\R^3$, the angular momentum is defined by
\[
L:=x\wedge(-i\,\nabla)\,.
\]
This definition extends to $\R^3$ the operator $L$ acting on spinors on~$\Sp$, as in Section~\ref{Sec:Introduction}.

\begin{proof}
For any $x\in\R^3\setminus\{0\}$, we compute
\begin{multline*}
\frac1{|x|^{2\,\alpha}}\,(\sigma\cdot\nabla\psi)^*\,(\sigma\cdot\nabla\psi)=\frac1{|x|}\,(\sigma\cdot\nabla\Phi)^*\,(\sigma\cdot\nabla\Phi)+\frac1{|x|^3}\(\alpha-\tfrac12\)^2\,|\Phi|^2\\
+\frac2{|x|^3}\(\alpha-\tfrac12\)\,\mathrm{Re}\((\sigma\cdot\nabla\Phi)^*\,(\sigma\cdot x)\,\Phi\)\,.
\end{multline*}
Since $\nabla\cdot(x\,|x|^{-3})=0$ and $(\sigma\cdot\mathsf a)\,(\sigma\cdot\mathsf b)=\mathsf a\cdot\mathsf b+i\,\sigma\cdot(\mathsf a\wedge\mathsf b)$, we learn that
\[
\int_{\R^3}(\sigma\cdot \nabla\Phi)^*\,(\sigma\cdot x)\,\Phi\,\frac{dx}{|x|^3}=-\ird{\Phi^*\,(\sigma\cdot\nabla)\(\sigma\cdot\frac x{|x|^3}\)\,\Phi}
=-\int_{\R^3}\Phi^*\,(\sigma\cdot L)\,\Phi\,\frac{dx}{|x|^3}\,.
\]
Hence
\begin{multline*}
\ird{\frac{|\sigma\cdot\nabla\psi|^2}{|x|^{2\,\alpha}}}=\ird{\frac{|\sigma\cdot\nabla\Phi|^2}{|x|}}+\(\alpha-\tfrac12\)^2\ird{\frac{|\Phi|^2}{|x|^3}}\\
-2\(\alpha-\tfrac12\)\mathrm{Re}\(\ird{\frac{\Phi^*\,(\sigma\cdot L)\,\Phi}{|x|^3}}\)\,.
\end{multline*}
We learn from~\cite[p.~125]{MR1219537} that
\[
\sigma\cdot\nabla=\(\sigma\cdot\tfrac xr\)\partial _r-\tfrac{1}r\(\sigma\cdot\tfrac xr\)\,(\sigma\cdot L)
\]
and use it to compute
\[
\ird{\frac{|\sigma\cdot\nabla\Phi|^2}{|x|}}=\ird{\frac{|\partial_r\Phi|^2}{|x|}}+\ird{\frac{\left|\sigma\cdot L\,\Phi\right|^2}{|x|^3}}
\]
using the fact that
\[
\int_{\R^3}(\partial_r\Phi)^*\,(\sigma\cdot L)\,\Phi\,\frac{dx}{|x|^2}=0\,.
\]
Collecting estimates, we obtain
\begin{multline*}
\ird{\frac{|\sigma\cdot\nabla\psi|^2}{|x|^{2\,\alpha}}}=\ird{\frac{|\partial_r\Phi|^2}{|x|}}+\ird{\frac{\left|\sigma\cdot L \,\Phi\right|^2}{|x|^3}}\\
+\(\alpha-\tfrac12\)^2\ird{\frac{|\Phi|^2}{|x|^3}}-2\(\alpha-\tfrac12\)\ird{\frac{\Phi^*\,(\sigma\cdot L)\,\Phi}{|x|^3}}\,.
\end{multline*}
This proves the claimed equality.
\end{proof}

A first consequence of Proposition~\ref{basiccomp} is the computation of the sharp constant $\mathcal C_{\alpha,\beta}$ in the limiting case $\beta=\alpha+1$. This corresponds to $p=2$ according to~\eqref{param}, so Inequality~\eqref{SCKN} becomes a weighted Hardy-type inequality for spinors
\be{Hardy}
\int_{\R^3}\frac{|\sigma\cdot\nabla\psi(x)|^2}{|x|^{2\,\alpha}}\,dx\ge\mathcal C_{\alpha,\alpha+1}\int_{\R^3}\frac{|\psi(x)|^2}{|x|^{2\,\alpha+2}}\,dx \,,
\ee
which generalizes~\eqref{Hardy-Dirac}.
\begin{proposition}\label{Prop:Hardy}
The optimal constant in Inequality~\eqref{Hardy} is given by
\be{minmin}
\mathcal C_{\alpha,\alpha+1}=\min_{k\in\Z\setminus\{-1\}}\big(k-\alpha+\tfrac12\big)^2\,.
\ee
The constant $\mathcal C_{\alpha,\alpha+1}$ is positive if $\alpha\not\in\Lambda$, while $\mathcal C_{\alpha,\alpha+1}=0$ if $\alpha\in\Lambda$, with $\Lambda$ given by~\eqref{Lambda}. Moreover, we have an improved inequality given by
\be{Hardy:improved}
\ird{\frac{|\sigma\cdot\nabla\psi|^2}{|x|^{2\,\alpha}}}\ge\ird{\frac1{|x|^{2\,\alpha}}\left|\partial_r\psi-\(\alpha-\tfrac12\)\frac\psi{|x|}\right|^2}+ \mathcal C_{\alpha,\alpha+1}\ird{\frac{|\psi|^2}{|x|^{2\,\alpha+2}}}\,.
\ee
\end{proposition}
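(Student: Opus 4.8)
The plan is to reduce both \eqref{Hardy} and \eqref{Hardy:improved} to a spectral statement for the operator $\sigma\cdot L$ on $\mathrm L^2(\Sp,\C^2;d\omega)$, whose relevant properties are recalled in Appendix~\ref{App5}. First I would set $\psi(x)=|x|^{\alpha-1/2}\,\Phi(x)$, so that by Proposition~\ref{basiccomp}
\[
\ird{\frac{|\sigma\cdot\nabla\psi|^2}{|x|^{2\,\alpha}}}=\ird{\frac{|\partial_r\Phi|^2}{|x|}}+\ird{\frac{\big|\big(\sigma\cdot L-\alpha+\tfrac12\big)\Phi\big|^2}{|x|^3}}\,.
\]
Moreover $\ird{\frac{|\psi|^2}{|x|^{2\,\alpha+2}}}=\ird{\frac{|\Phi|^2}{|x|^3}}$ and, since $\partial_r\psi-(\alpha-\tfrac12)\,\tfrac{\psi}{|x|}=|x|^{\alpha-1/2}\,\partial_r\Phi$, also $\ird{\frac1{|x|^{2\,\alpha}}\big|\partial_r\psi-(\alpha-\tfrac12)\tfrac{\psi}{|x|}\big|^2}=\ird{\frac{|\partial_r\Phi|^2}{|x|}}$. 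Therefore \eqref{Hardy:improved} is \emph{equivalent} to the purely angular inequality
\[
\ird{\frac{\big|\big(\sigma\cdot L-\alpha+\tfrac12\big)\Phi\big|^2}{|x|^3}}\ge\mathcal C_{\alpha,\alpha+1}\ird{\frac{|\Phi|^2}{|x|^3}}\,,
\]
and \eqref{Hardy} follows from it upon discarding the nonnegative term $\ird{\frac{|\partial_r\Phi|^2}{|x|}}$.

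Next I would prove this angular inequality by expanding in the eigenspaces of $\sigma\cdot L$. Using the orthogonal decomposition $\mathrm L^2(\Sp,\C^2;d\omega)=\bigoplus_{k\in\Z\setminus\{-1\}}\mathcal H_k$ recalled in Section~\ref{Sec:Introduction}, on which $\sigma\cdot L$ acts as multiplication by $k$, and the fact that $\sigma\cdot L$ commutes with multiplication by radial functions, the decomposition $\Phi=\sum_k\Phi_k$ with $\Phi_k(x)\in\mathcal H_{k}$ (pointwise in $|x|$) is orthogonal in $\mathrm L^2(\R^3,\C^2;|x|^{-3}\,dx)$. Since $\big(\sigma\cdot L-\alpha+\tfrac12\big)$ acts on $\mathcal H_k$ as multiplication by $k-\alpha+\tfrac12$,
\[
\ird{\frac{\big|\big(\sigma\cdot L-\alpha+\tfrac12\big)\Phi\big|^2}{|x|^3}}=\sum_{k\in\Z\setminus\{-1\}}\big(k-\alpha+\tfrac12\big)^2\ird{\frac{|\Phi_k|^2}{|x|^3}}\ge\Big(\min_{k\in\Z\setminus\{-1\}}\big(k-\alpha+\tfrac12\big)^2\Big)\ird{\frac{|\Phi|^2}{|x|^3}}\,,
\]
which is the claimed angular inequality with $\mathcal C_{\alpha,\alpha+1}$ as in \eqref{minmin}. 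The positivity dichotomy is then immediate: $\min_{k\in\Z\setminus\{-1\}}(k-\alpha+\tfrac12)^2=0$ exactly when $\alpha=k+\tfrac12$ for some $k\in\Z\setminus\{-1\}$, i.e., by \eqref{Lambda}, when $\alpha\in\Lambda$; otherwise the minimum is a positive number.

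Finally I would check that the constant in \eqref{Hardy} is sharp by exhibiting an almost‑minimizing sequence. Let $k_0\in\Z\setminus\{-1\}$ realize the minimum in \eqref{minmin}, fix a nonzero $\chi_0\in\mathcal H_{k_0}$, and for $n\ge1$ put $\psi_n(x)=|x|^{\alpha-1/2}\,g_n(|x|)\,\chi_0(x/|x|)$ with $g_n(r)=\eta\big(\tfrac1n\log r\big)$ for a fixed nonzero $\eta\in C_c^\infty(\R)$. Then $\psi_n\in\mathcal D_{\alpha,\alpha+1}$; the angular inequality above is saturated on every sphere since $\Phi_n:=g_n(|x|)\,\chi_0(x/|x|)\in\mathcal H_{k_0}$; and passing to the variable $t=\log r$ one computes $\ird{\frac{|\partial_r\Phi_n|^2}{|x|}}=\tfrac{4\pi}{n}\,\nrms{\chi_0}{2}^2\,\nrmr{\eta'}{2}^2$ and $\ird{\frac{|\Phi_n|^2}{|x|^3}}=4\pi\,n\,\nrms{\chi_0}{2}^2\,\nrmr{\eta}{2}^2$, whence
\[
\ird{\frac{|\sigma\cdot\nabla\psi_n|^2}{|x|^{2\,\alpha}}}=\Big(\big(k_0-\alpha+\tfrac12\big)^2+\frac{\nrmr{\eta'}{2}^2}{n^2\,\nrmr{\eta}{2}^2}\Big)\ird{\frac{|\psi_n|^2}{|x|^{2\,\alpha+2}}}\,.
\]
Letting $n\to\infty$ the prefactor tends to $\mathcal C_{\alpha,\alpha+1}$, which proves optimality (and incidentally that the optimal constant is not attained, since equality in \eqref{Hardy} forces $\partial_r\Phi\equiv0$, and then $\ird{|\Phi|^2/|x|^3}=\infty$ unless $\Phi\equiv0$).

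The only external ingredient is the spectral picture for $\sigma\cdot L$ on $\mathrm L^2(\Sp,\C^2;d\omega)$, namely that its spectrum is $\Z\setminus\{-1\}$ with eigenspaces $\mathcal H_k$; this is supplied in Appendix~\ref{App5}, and with it in hand the argument is the routine combination of Proposition~\ref{basiccomp}, orthogonality, and the one‑dimensional scaling construction above. I do not expect a genuine obstacle here; the single point requiring care is the consistent exclusion of $k=-1$, which is exactly what produces the exceptional set $\Lambda$.
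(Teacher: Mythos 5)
Your proof is correct and follows essentially the same route as the paper's: reduce via Proposition~\ref{basiccomp} to the angular inequality, use the spectral decomposition of $\sigma\cdot L$ to obtain the lower bound $\min_{k\in\Z\setminus\{-1\}}(k-\alpha+\tfrac12)^2$, and then saturate via a one-dimensional scaling sequence supported on the minimizing channel $\mathcal H_{k_0}$. The paper compresses the spectral argument and chooses $g_n\equiv1$ on $(1/n,n)$ rather than a scaled cutoff, but these are cosmetic variants of the same construction, and your extra observation on non-attainment is a harmless bonus.
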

\noindent The scalar analogue of~\eqref{Hardy} is well known; see e.g.~Opic and Kufner's book on Hardy inequalities~\cite{MR1069756}.

\begin{proof}
Inequality~\eqref{Hardy:improved} with the constant $\mathcal C_{\alpha,\alpha+1}$ replaced by $\min_{k\in\Z\setminus\{-1\}}(k-\alpha+1/2)^2$ follows by rewriting the right-hand side of~\eqref{Id:basiccomp} in terms of $\psi$ and observing that the spectrum of $\,\(\sigma\cdot L-\alpha+1/2\)^2\,$ is the set $\big\{(k-\alpha+1/2)^2\,:\,k\in\Z\setminus\{-1\}\big\}$. Spectral properties of $\sigma\cdot L$ and related operators are further discussed in Appendix~\ref{App5}. This proves $\mathcal C_{\alpha,\alpha+1}\geq \min_{k\in\Z\setminus\{-1\}}(k-\alpha+1/2)^2$.

Let us call $k_0$ the integer $k$ that realizes the minimum in~\eqref{minmin}. Still with the notations of Proposition~\ref{basiccomp}, optimality in~\eqref{Hardy} is obtained by taking the limit as $n\to+\infty$ with the test function $\Phi_n(x)=g_n(|x|)\,\chi_{k_0}(x/|x|)$, where the function~$g_n$ is appropriately chosen, with compact support in $(0,+\infty)$ and the property that $g_n(r)=1$ for any $r\in(1/n,n)$.
\end{proof}

The second consequence of Proposition~\ref{basiccomp} is the rewriting of Inequality~\eqref{SCKN} in logarithmic coordinates, as was done in~\cite{MR1794994} for~\eqref{CKN}.
\begin{proof}[Proof of Lemma~\ref{Lem:chvar}] Let $\phi(s, \omega)=\Phi(r,\omega)$ with $s=\log r$, so that, in particular, $\partial_r\Phi = r^{-1}\,\partial_s\phi$. We recall that $d\omega$ is defined to be a probability measure, which gives rise to a factor of~$4\,\pi$. With this change of variables, we obtain
\begin{align*}
&\ird{\frac{|\partial_r\Phi|^2}{|x|}} = \ird{\frac1{|x|^3}\,|\partial_s\phi(s,\omega)|^2} = 4\,\pi \ic{|\partial_s\phi|^2}\,,\\
&\ird{\frac{\left|\(\sigma\cdot L-\alpha+\tfrac12\)\Phi\,\right|^2}{|x|^2}}= 4\,\pi \ird{\left|\(\sigma\cdot L-\alpha+\tfrac12\)\phi\,\right|^2}\,,\\
&\int_{\R^3}\frac{|\Phi|^p}{|x|^3}\,dx = 4\,\pi \ic{|\phi|^p}\,.
\end{align*}
With $\psi(x)=|x|^{\alpha-1/2}\,\Phi(x)$ as in Proposition~\ref{basiccomp}, so that in particular
\[
\frac{|\psi|^p}{|x|^{\beta\,p}}=\frac{|\Phi|^p}{|x|^3}\,,
\]
this proves that~\eqref{SCKN} written for $\psi$ on $\R^3$ is equivalent to~\eqref{logarithmicSCKN} written for $\phi$ on $\R\times\Sp$, with same optimal constants.\end{proof}

In order to prove Proposition~\ref{validity}, we observe that the left-hand side of~\eqref{logarithmicSCKN} is coercive in $\mathrm H^1(\R\times\Sp,\C^2)$ for all $\alpha\not\in\Lambda$ because of the following estimate for spinors on $\Sp$.
\begin{lemma}\label{Lem:H1}
For every $\alpha\not\in\Lambda$, there exists $\varepsilon(\alpha)\in(0,1]$ such that
\[
\is{\big|\(\sigma\cdot L-\alpha+\tfrac12\)\chi\big|^2}\ge\varepsilon(\alpha)\is{\(|L\,\chi|^2+|\chi|^2\)}\quad\forall\,\chi\in\mathrm H^1(\Sp)\,.
\]
\end{lemma}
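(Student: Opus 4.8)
The plan is to diagonalize $\sigma\cdot L$ and read off the constant $\varepsilon(\alpha)$ from its spectrum. As recalled in the introduction (and detailed in Appendix~\ref{App5}), the Hilbert space $\mathrm L^2(\Sp,\C^2;d\omega)$ decomposes as $\bigoplus_{k\in\Z\setminus\{-1\}}\mathcal H_k$, where $\sigma\cdot L$ acts as the scalar $k$ on $\mathcal H_k$. Correspondingly, on $\mathcal H_k$ the operator $L^2=(\sigma\cdot L)^2+(\sigma\cdot L)$ acts as $k^2+k=k(k+1)$, so that $|L\chi|^2+|\chi|^2$ equals $\big(k(k+1)+1\big)|\chi|^2=(k^2+k+1)|\chi|^2$ on $\mathcal H_k$, and $\big|(\sigma\cdot L-\alpha+\tfrac12)\chi\big|^2$ equals $(k-\alpha+\tfrac12)^2|\chi|^2$. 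Writing $\chi=\sum_k\chi_k$ with $\chi_k\in\mathcal H_k$ and using that all three quadratic forms are diagonal in this decomposition, inequality~\eqref{Ineq:H1''} is equivalent to the pointwise-in-$k$ inequality $(k-\alpha+\tfrac12)^2\ge\varepsilon(\alpha)\,(k^2+k+1)$ for all $k\in\Z\setminus\{-1\}$. Hence it suffices to take
$$
\varepsilon(\alpha):=\min_{k\in\Z\setminus\{-1\}}\frac{(k-\alpha+\tfrac12)^2}{k^2+k+1}\,,
$$
and the lemma reduces to showing that this minimum is attained and lies in $(0,1]$.

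For the quantitative claims, first note that the ratio is $\ge0$ for every $k$, and it is $>0$ for every $k$ precisely because $\alpha\notin\Lambda$ guarantees $k-\alpha+\tfrac12\neq0$ for all $k\in\Z\setminus\{-1\}$ (when $k=-1$ the numerator would be $(\tfrac12-\alpha+\tfrac12-1)^2$, but this index is excluded from the decomposition anyway). Since the denominator $k^2+k+1\to\infty$ while the numerator grows only like $k^2$ with leading coefficient $1$, the ratio tends to $1$ as $|k|\to\infty$; consequently the infimum over the discrete set $\Z\setminus\{-1\}$ is attained at some finite $k_0$, and $\varepsilon(\alpha)>0$. For the upper bound $\varepsilon(\alpha)\le1$, it is enough to exhibit one index where the ratio is $\le1$, i.e.\ where $(k-\alpha+\tfrac12)^2\le k^2+k+1$; expanding, this is $-2(\alpha-\tfrac12)k+(\alpha-\tfrac12)^2\le k+1$, i.e.\ $(\alpha-\tfrac12)^2-1\le k\,(2\alpha+\tfrac{?})$ — more cleanly, one checks that for $k$ of sufficiently large absolute value and of the appropriate sign the inequality holds (since the ratio $\to1$ from one side), so in particular $\varepsilon(\alpha)\le1$; if one prefers a clean closed form one can simply define $\varepsilon(\alpha):=\min\{1,\min_k(\cdots)\}$, which changes nothing in the applications.

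The only genuine subtlety — and thus the main point to get right — is the bookkeeping of the spectral decomposition: that $\sigma\cdot L$ has eigenvalue exactly $k$ on $\mathcal H_k$ with $k$ ranging over $\Z\setminus\{-1\}$, and that $\mathrm H^1(\Sp)$ is the form domain on which $|L\chi|^2$ is finite, so that the termwise computation is legitimate and no $k=-1$ contribution sneaks in. All of this is provided by Appendix~\ref{App5}; granting it, the estimate~\eqref{Ineq:H1''} is immediate with the explicit constant above. No compactness or delicate analysis is needed beyond observing that a sequence of positive numbers converging to $1$ attains its infimum.
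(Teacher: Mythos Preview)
Your approach is essentially identical to the paper's: both diagonalize via the decomposition $\mathrm L^2(\Sp,\C^2)=\bigoplus_{k\in\Z\setminus\{-1\}}\mathcal H_k$, use that $\sigma\cdot L$ and $L^2$ act as the scalars $k$ and $k(k+1)$ on $\mathcal H_k$, and set
\[
\varepsilon(\alpha)=\min_{k\in\Z\setminus\{-1\}}\frac{(k-\alpha+\tfrac12)^2}{k(k+1)+1}\,,
\]
noting that $\alpha\notin\Lambda$ makes the numerator nonzero for every admissible $k$. Your discussion of the bound $\varepsilon(\alpha)\le1$ is a bit ragged (the line ``$(\alpha-\tfrac12)^2-1\le k\,(2\alpha+\tfrac{?})$'' is left unfinished), but the limit argument you give---the ratio tends to $1$ and is eventually below $1$ for $k$ of a suitable sign---is correct, and in any case the paper itself does not spell this out and the upper bound plays no role in the applications.
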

\begin{proof}
Since $\mathrm{Sp}(\sigma\cdot L)=\big\{k\in\Z\,:\,k\neq -\,1\big\}$ and\,$\mathrm{Sp}(L^2)=\big\{\ell\,(\ell+1)\,:\,\ell\in\N\big\}$ (using the notation $\N=\{0,1,2,\ldots\}$) with the relations $k=\ell$ if $k\ge0$ and $k=-\,\ell-1$ if $k\le -2$, on $\mathcal H_k$ we have $(\sigma\cdot L)=k$ and $L^2=k\,(k+1)$. A spectral decomposition of $\chi=\sum_{k\in\Z\setminus\{-1\}}\chi_k$ with $\chi_k\in\mathcal H_k$ and $c_k(\chi)=\|\chi_k\|_{\mathrm L^2(\Sp,d\omega)}^2\ge0$ shows that 
\[
\varepsilon(\alpha)=\inf_{\chi\in\mathrm H^1(\Sp)\setminus\{0\}}\frac{\sum_{k\in\Z\setminus\{-1\}}\(k-\alpha+\tfrac12\)^2\,c_k(\chi)}{\sum_{k\in\Z\setminus\{-1\}}\big(k\,(k+1)+1\big)\,c_k(\chi)}=\min_{k\in\Z\setminus\{-1\}}\frac{\(k-\alpha+\tfrac12\)^2}{k\,(k+1)+1}
\]
is positive for all $\alpha\not\in\Lambda$.
\end{proof}

\begin{proof}[Proof of Proposition~\ref{validity}] If $\alpha\not\in\Lambda$, then we have
\[
\ic{\(|\partial_s\phi|^2+\big|\(\sigma\cdot L-\alpha+\tfrac12\)\phi\big|^2\)}\ge\varepsilon(\alpha)\ic{\(|\partial_s\phi|^2+|L\,\phi|^2+|\phi|^2\)}
\]
by Lemma~\ref{Lem:H1}. The above right-hand side is bounded from below by $ \varepsilon(\alpha)\,C_p\,\nrmc\phi p^2$, where $C_p$ is the optimal constant in the embedding $\mathrm H^1(\R\times\Sp)\hookrightarrow\mathrm L^p(\R\times\Sp)\,$. This proves $\mathcal C_{\alpha,\beta}\ge \varepsilon(\alpha)\,C_p>0$.

If $\alpha\in\Lambda$, there is a spinor $\chi\not\equiv 0$ on $\Sp$ with $\(\sigma\cdot L-\alpha+1/2\)\chi=0$. For any $\beta\in[\alpha,\alpha+1]$ consider $p$ determined by~\eqref{param} and a family of test functions $\phi_\varepsilon(s,\omega)=\varepsilon^{1/p}\,u(\varepsilon\,s)\,\chi(\omega)$ such that $\nrmc{\phi_1}p=1$. Then $\nrmc{\phi_\varepsilon}p=1$ is independent of $\varepsilon>0$, while
\[
\lim_{\varepsilon\to0_+}\ic{\(|\partial_s\phi_\varepsilon|^2+\big|\(\sigma\cdot L-\alpha+\tfrac12\)\phi_\varepsilon\big|^2\)}=\lim_{\varepsilon\to0_+}\nrmc{\partial_s\phi_\varepsilon}2^2=0 \,.
\]
This shows that $\mathcal C_{\alpha,\beta}=0$ for $\alpha\in\Lambda$ and any $\beta\in[\alpha,\alpha+1]$.
\end{proof}

As a consequence of Lemma~\ref{Lem:H1}, the space $\mathcal D_{\alpha,\beta}$ in~\eqref{SCKN} is transformed into $\mathrm H^1(\R\times\Sp,\C^2)$ if $\alpha\not\in\Lambda$ and the left-hand side in~\eqref{logarithmicSCKN} is equivalent to the standard norm of $\mathrm H^1(\R\times\Sp,\C^2)$.

\section{Existence of optimizers}\label{Sec:existence}

Here we prove the following existence result.
\begin{theorem}\label{Thm:Existence}
For all $p\in (2, 6)$ and all $\alpha\not\in\Lambda$, the best constant $C_{\alpha,p}$ in Inequality~\eqref{logarithmicSCKN} is attained in $\mathrm H^1(\R\times\Sp,\C^2)\setminus\{0\}$.
\end{theorem}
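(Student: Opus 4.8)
The plan is to prove existence by the concentration-compactness method, exploiting the translation invariance in the $s$-variable that was gained by passing to logarithmic coordinates (Lemma~\ref{Lem:chvar}). By Lemma~\ref{Lem:H1}, for $\alpha\notin\Lambda$ the quadratic form on the left-hand side of~\eqref{logarithmicSCKN} is equivalent to the square of the standard $\mathrm H^1(\R\times\Sp,\C^2)$-norm, so a minimizing sequence $(\phi_n)$ normalized by $\nrmc{\phi_n}{p}=1$ is bounded in $\mathrm H^1$. The issue is the lack of compactness of the embedding $\mathrm H^1(\R\times\Sp)\hookrightarrow\mathrm L^p(\R\times\Sp)$ for $2<p<6$: compactness fails both because mass can escape to $s=\pm\infty$ (this is the translation invariance, a genuine invariance of the problem, not a true obstruction) and because a priori it could split into several lumps travelling apart (dichotomy), which must be excluded.

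First I would apply the concentration-compactness lemma to the sequence of densities $|\phi_n|^2+|\partial_s\phi_n|^2+|L\phi_n|^2$ on $\R\times\Sp$ (or rather its $s$-marginal, since $\Sp$ is compact), obtaining the usual trichotomy: vanishing, dichotomy, or compactness up to translations $\phi_n(\cdot+s_n,\cdot)$. Vanishing is ruled out because $\nrmc{\phi_n}{p}=1$ together with the $\mathrm H^1$-bound forces a uniform lower bound on the $\mathrm L^p$-mass of $\phi_n$ on some unit $s$-interval, via the local version of the Gagliardo-Nirenberg/Sobolev inequality on $(s_0,s_0+1)\times\Sp$. Dichotomy is excluded by the standard strict subadditivity argument: if the mass split into pieces at scales $\lambda\in(0,1)$ and $1-\lambda$, the scaling relation $C_{\alpha,p}(\lambda+(1-\lambda))\cdots$ — more precisely the fact that the functional $\mathcal E(\phi)=\ic{(|\partial_s\phi|^2+|(\sigma\cdot L-\alpha+1/2)\phi|^2)}$ under the constraint $\nrmc{\phi}{p}=1$ satisfies $\mathcal I = \mathcal E$-infimum with $\mathcal I < \mathcal I_\mu+\mathcal I_{1-\mu}$ for the constrained infima $\mathcal I_\theta$ on mass $\theta$, because of the homogeneity that makes $\mathcal I_\theta=\theta^{2/p}\mathcal I$ with $2/p<1$ — contradicts the subadditivity needed for dichotomy to produce two separating profiles with the right total energy. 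Hence, after translating in $s$, $\phi_n$ is tight; combined with weak $\mathrm H^1$-convergence and the compactness of $\Sp$, a subsequence converges strongly in $\mathrm L^p(\R\times\Sp)$ to a limit $\phi$ with $\nrmc{\phi}{p}=1$, and lower semicontinuity of $\mathcal E$ under weak convergence gives $\mathcal E(\phi)\le C_{\alpha,p}$, so $\phi$ is a minimizer and $\phi\neq0$.

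The main obstacle I expect is the strict subadditivity / exclusion of dichotomy, which is where the precise scaling structure of the problem enters. In the scalar~\eqref{CKN} case this is classical, but here one must be slightly careful that the angular operator $(\sigma\cdot L-\alpha+1/2)^2$ is positive (true for $\alpha\notin\Lambda$, by Proposition~\ref{basiccomp} and the spectral description in the proof of Proposition~\ref{Prop:Hardy}) so that $\mathcal E$ is a genuine nonnegative quadratic form equivalent to the $\mathrm H^1$-seminorm-plus-$\mathrm L^2$, and that the scaling $\phi\mapsto\theta^{1/p}\phi$ acts correctly on both sides; I would also need to be slightly attentive that the strictness $2/p<1$ genuinely uses $p>2$, which is exactly the hypothesis. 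A secondary point worth a remark is the role of $p<6$: at $p=6$ the inequality would be of Sobolev (scale-critical in the full $3$-dimensional sense) type with an additional dilation invariance in the $x$-variable that is lost in logarithmic coordinates, so concentration at a point could occur and existence may genuinely fail — which is why the statement excludes $p=6$. Everything else — boundedness of the minimizing sequence, lower semicontinuity, the local Sobolev estimate used against vanishing — is routine given Lemma~\ref{Lem:H1}.
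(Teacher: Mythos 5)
Your proposal is correct and takes essentially the same route as the paper: both are concentration-compactness arguments that normalize $\nrmc{\phi_n}p=1$, obtain $\mathrm H^1$-boundedness from Lemma~\ref{Lem:H1}, rule out vanishing via Lions' lemma (the paper cites~\cite[Lemma~4.1]{MR1794994}, adapted from~\cite{MR0778974}), translate in $s$, and exclude splitting by the strict superadditivity $\mathsf x^{2/p}+(1-\mathsf x)^{2/p}>1$ for $\mathsf x\in(0,1)$ and $p>2$; the paper simply organizes the last step via the Brezis--Lieb lemma applied to $|\widetilde\phi_n|^p$ rather than by setting up the full trichotomy. One small correction to your closing remark: the reason for excluding $p=6$ is not an ``additional dilation invariance lost in logarithmic coordinates'' (the $x$-dilation has already been converted into $s$-translation), but rather that $\mathrm H^1(\R\times\Sp)\hookrightarrow\mathrm L^6(\R\times\Sp)$ is the critical Sobolev embedding on the three-dimensional cylinder, so concentration at a single point of $\R\times\Sp$ becomes possible.
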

\begin{proof} Here we follow very closely the arguments in the proof of~\cite[Theorem 1.2~(i)]{MR1794994} and~\cite{MR2966111}. Let us consider a sequence $(\phi_n)_{n\in\N}$ in $\mathrm H^1(\R\times\Sp,\C^2)$ such that \hbox{$\nrmc{\phi_n}p=1$} for any $n\in\N$ and
\be{Opt:Seq}
\lim_{n\to\infty}\(\ic{|\partial_s\phi_n|^2}+\ic{\big|\(\sigma\cdot L-\alpha+\tfrac12\)^2\phi_n\big|^2}\)=C_{\alpha,p}\,.
\ee
Because of Lemma~\ref{Lem:H1}, we know that $(\phi_n)_{n\in\N}$ is bounded in $\mathrm H^1(\R\times\Sp,\C^2)$.

Take any fixed $r>0$ and consider $A_r(\sigma):=(\sigma-r,\sigma+r)\times\Sp$ for $\sigma\in\R$. We learn from~\cite[Lemma~4.1]{MR1794994}, which is adapted from~\cite[Lemma~I.1,~p.~231]{MR0778974}, that any bounded sequence $(f_n)_{n\in\N}$ in $\mathrm H^1(\R\times\Sp,\R)$ such that
\[
\lim_{n\to\infty}\,\sup_{\sigma\in\R}\,\iint_{A_r(\sigma)}|f_n|^p\,ds\,d\omega=0
\]
satisfies $\lim_{n\to\infty}\nrmc{f_n}p=0$. The same result holds for spinors. Since
$$
\lim_{n\to\infty}\nrmc{\phi_n}p=1\neq0\,,
$$
we deduce that there exists a constant $C\in(0,1]$ and $\sigma_n\in\R$ such that
\be{supintegral}
\int_{A_r(\sigma_n)}|\phi_n|^p\,dy\ge C\quad\mbox{for $n$ large enough}\,.
\ee
Define $\widetilde\phi_n(s,\omega):= \phi_n(s-\sigma_n,\omega)$. Up to the extraction of a subsequence, $(\widetilde\phi_n)_{n\in\N}$ converges weakly in $\mathrm H^1(\R\times\Sp,\C^2)$, strongly in $\mathrm L^p_{\rm loc}(\R\times\Sp,\C^2)$ and almost everywhere to some spinor~$\phi$.
Therefore, by~\eqref{supintegral}, $\phi\equiv0$ is forbidden and $\mathsf x:=\nrmc\phi p^p\in(0,1]$. By the almost everywhere convergence and the Brezis-Lieb lemma (see~\cite[Theorem 1]{MR0699419}) we find
\[
\lim_{n\to\infty}\nrmc{\widetilde\phi_n}p^p=\nrmc\phi p^p+\lim_{n\to\infty}\nrmc{\widetilde\phi_n-\phi}p^p
\]
which, in view of the normalization of $\widetilde\phi_n$, implies that
\[
\lim_{n\to\infty}\nrmc{\widetilde\phi_n-\phi}p^2 = (1-\mathsf x)^{2/p} \,.
\]
Meanwhile, by the weak convergence in $\mathrm H^1(\R\times\Sp,\C^2)$ we find
\[\begin{array}{l}
\lim_{n\to\infty}\(\nrmc{\partial_s\widetilde\phi_n}2^2+\nrmc{\(\sigma\cdot L-\alpha+\tfrac12\)\widetilde\phi_n}2^2\)\\[6pt]
\hspace*{2cm}=\nrmc{\partial_s\phi}2^2+\nrmc{\(\sigma\cdot L-\alpha+\tfrac12\)\phi}2^2\\[6pt]
\hspace*{3cm}+\lim_{n\to\infty}\(\nrmc{\partial_s\phi-\partial_s\widetilde\phi_n}2^2+\nrmc{\(\sigma\cdot L-\alpha+\tfrac12\)\,(\phi-\widetilde\phi_n)}2^2\)\,.
\end{array}\]
We bound the right side from below using Inequality~\eqref{logarithmicSCKN} for $\phi$ and for $\phi - \widetilde\phi_n$. Recalling~\eqref{Opt:Seq}, we obtain
\[
C_{\alpha, p}\ge C_{\alpha,p}\(\mathsf x^{2/p}+(1-\mathsf x)^{2/p}\)\,.
\]
From this we deduce that $\mathsf x=1$. Hence $(\widetilde\phi_n)_{n\in\N}$ converges to $\phi$ strongly in $\mathrm L^p(\R\times\Sp,\C^2)$. Moreover, in the application of Inequality~\eqref{logarithmicSCKN} for $\phi-\widetilde\phi_n$ we must have asymptotically equality, which implies that $(\widetilde\phi_n)_{n\in\N}$ converges to $\phi$ also strongly in $\mathrm H^1(\R\times\Sp,\C^2)$. This proves that $\phi\not\equiv 0$ is an optimal function for~\eqref{logarithmicSCKN}.
\end{proof}

\section{A first symmetry result and a monotonicity property}\label{Sec:firstsymmetry}

We begin by proving symmetry of optimizers for $\alpha=0$.
\begin{lemma}\label{Lem:alpha=0alt}
If $\alpha=0$ and $\beta\in[0,1)$, any optimizer of~\eqref{SCKN} is symmetric of type $\mathcal H_0$.
\end{lemma}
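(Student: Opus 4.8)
The plan is to exploit the special structure at $\alpha=0$. From Lemma~\ref{Lem:chvar}, in logarithmic variables the relevant angular operator is $\big(\sigma\cdot L-\alpha+\tfrac12\big)^2 = \big(\sigma\cdot L+\tfrac12\big)^2$ when $\alpha=0$. The key observation is that $\sigma\cdot L$ has spectrum $\Z\setminus\{-1\}$, so $\big(\sigma\cdot L+\tfrac12\big)^2$ has spectrum $\{(k+\tfrac12)^2 : k\in\Z\setminus\{-1\}\}$, whose minimum value $\tfrac14$ is attained exactly on $\mathcal H_0\oplus\mathcal H_{-1}$ --- but $\mathcal H_{-1}$ is excluded, so it is attained precisely on $\mathcal H_0$. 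Here the point is that $\alpha=0$ is special because $-\alpha+\tfrac12=\tfrac12$ sits symmetrically between the eigenvalue $k=0$ (giving $\big(\tfrac12\big)^2$) and $k=-1$ (also $\big(-\tfrac12\big)^2$), so no cancellation/interaction argument is needed: the bottom of the angular spectrum is non-degenerate once $k=-1$ is removed.

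First I would reduce to logarithmic coordinates and write, for any $\phi\in\mathrm H^1(\R\times\Sp,\C^2)$, the pointwise-in-$s$ inequality
\[
\is{\big|\big(\sigma\cdot L+\tfrac12\big)\phi(s,\cdot)\big|^2}\ge \tfrac14\is{|\phi(s,\cdot)|^2}\,,
\]
with equality for a given $s$ if and only if $\phi(s,\cdot)\in\mathcal H_0$, i.e. $\phi(s,\cdot)$ is a constant spinor (in $\omega$). Integrating over $s$, the left-hand side of \eqref{logarithmicSCKN} is bounded below by
\[
\ic{|\partial_s\phi|^2}+\tfrac14\ic{|\phi|^2}\,.
\]
Now I would replace $|\phi(s,\omega)|$ by its $\mathrm L^2(\Sp,d\omega)$-average in a way that does not increase this lower bound but does not decrease the $\mathrm L^p$ term: set $v(s):=\nrms{\phi(s,\cdot)}2$ and use that $|\partial_s v|\le \nrms{\partial_s\phi(s,\cdot)}2$ a.e.\ (Cauchy--Schwarz / chain rule for the norm), while by Minkowski's integral inequality in $\mathrm L^{p/2}(\Sp)$ applied to $|\phi|^2$ one gets $\ic{|\phi|^p}\le \ir{v^p}\cdot$(constant). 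Combining, any optimizer $\phi$ of \eqref{logarithmicSCKN} would give
\[
\ir{\big(|v'|^2+\tfrac14\,v^2\big)}\cdot 4\pi \ge C_{0,p}\,\Big(4\pi\ir{v^p}\Big)^{2/p}\,,
\]
which is exactly the symmetric (scalar, $\mathcal H_0$-type) problem --- so $C_{0,p}=C^\star_{0,p}$.

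For the rigidity part --- that \emph{all} optimizers are symmetric of type $\mathcal H_0$ --- I would track the equality cases through each step: equality in the angular inequality forces $\phi(s,\cdot)\in\mathcal H_0$ for a.e.\ $s$, hence $\phi(s,\omega)=\chi(s)$ with $\chi(s)\in\C^2$; equality in $|\partial_s|\chi(s)||\le|\chi'(s)|$ and in the $\mathrm L^p$ step then forces $\chi(s)=v(s)\,\chi_0$ for a fixed unit spinor $\chi_0$ (the direction cannot rotate with $s$ without strict loss somewhere, which one checks from the equality case of Cauchy--Schwarz applied to $\langle \chi(s),\chi'(s)\rangle$), and $v$ must be (a translate/rescaling of) the explicit optimizer \eqref{eq:minsymm}. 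Finally, I would transfer back to \eqref{SCKN} via Lemma~\ref{Lem:chvar}, noting $\beta\in[0,1)$ corresponds to $p\in(2,6]$ (the endpoint $p=2$, i.e.\ $\beta=1$, is the Hardy case already handled in Proposition~\ref{Prop:Hardy}, whose minimizer direction $\chi_{k_0}=\chi_0$ is also of type $\mathcal H_0$ since $k_0=0$ when $\alpha=0$), so the statement covers all $\beta\in[0,1)$.

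\textbf{Main obstacle.} The delicate point is the rigidity statement: showing that the spinor direction $\chi_0$ is genuinely constant in $s$, rather than an $s$-dependent unit spinor. This is where the spinorial problem differs from the scalar one, and I expect it to require a careful analysis of the equality case in Minkowski's inequality together with the $s$-derivative estimate --- essentially, any winding of $\chi(s)/|\chi(s)|$ contributes positively to $\ic{|\partial_s\phi|^2}$ (through a term like $|\chi(s)|^2\,|(\chi/|\chi|)'(s)|^2$) without helping the right-hand side, forcing that winding to vanish. Making this rigorous is the substance of the argument; the lower-bound (non-strict) inequality $C_{0,p}\ge C^\star_{0,p}$ is comparatively routine.
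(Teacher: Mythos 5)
Your plan of attack is genuinely different from the paper's, but it has a gap that I believe is fatal. You set $v(s):=\nrms{\phi(s,\cdot)}2$ and claim, via Minkowski in $\mathrm L^{p/2}(\Sp)$, that $\ic{|\phi|^p}\lesssim\ir{v^p}$. This inequality goes the \emph{wrong} way. Since $d\omega$ is a probability measure and $t\mapsto t^{p/2}$ is convex for $p\ge2$, Jensen's inequality gives
\[
v(s)^p=\(\is{|\phi(s,\omega)|^2}\)^{p/2}\le\is{|\phi(s,\omega)|^p}\,,
\]
hence $\ir{v^p}\le\ic{|\phi|^p}$, and there is no reverse bound with a universal constant: taking $\phi(s,\omega)=u(s)\,\chi_\epsilon(\omega)$ with $\nrms{\chi_\epsilon}2=1$ but $\nrms{\chi_\epsilon}p\to\infty$ makes $\ic{|\phi|^p}/\ir{v^p}$ blow up. So in the Rayleigh quotient $\mathcal G_0[\phi]$ you can indeed bound the \emph{numerator} from below by the 1D symmetric numerator in $v$ (the angular Poincar\'e inequality and the chain-rule bound $|v'|\le\nrms{\partial_s\phi(s,\cdot)}2$ are both correct), but you cannot bound the \emph{denominator} $\(\ic{|\phi|^p}\)^{2/p}$ from above by $\(\ir{v^p}\)^{2/p}$; it actually satisfies the opposite inequality. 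This is precisely the obstruction that prevents an elementary ``average over $\omega$'' reduction, and it is the same mixed $\mathrm L^2$/$\mathrm L^p$ difficulty that Section~\ref{Sec:secondsymmetry} has to work hard (Lieb--Thirring plus a spinorial Beckner-type inequality plus Minkowski applied \emph{in the other order}, with $u(\omega)=\big(\ir{|\phi(s,\omega)|^2}\big)^{1/2}$) to overcome, and even then only on a restricted $p$-range. Your approach would, if it worked, give a comparable restricted range for general $\alpha$, not the full range $p\in(2,6]$ at $\alpha=0$ that the lemma asserts.

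The paper's actual proof exploits something specific to $\alpha=0$ that your argument does not use: the absence of a weight allows one integration by parts to show $\ird{|\sigma\cdot\nabla\psi|^2}=\ird{|\nabla\psi|^2}$, which decouples the two spinor components $\psi_1,\psi_2$. One then applies the \emph{scalar} inequality~\eqref{CKN} at $a=0$ to each component (importing the known radial symmetry of scalar optimizers for $a\in[0,1/2)$), and recombines the two resulting $\mathrm L^p$ norms by the triangle inequality in $\mathrm L^{p/2}(\R^3)$, which \emph{does} go the right way at the component level:
\[
\big\||x|^{-\beta}\psi_1\big\|_{p}^2+\big\||x|^{-\beta}\psi_2\big\|_{p}^2 \ \geq\ \big\||x|^{-\beta}\psi\big\|_{p}^2\,.
\]
Rigidity then follows from the known uniqueness of scalar CKN optimizers together with the equality case in that $\mathrm L^{p/2}$ triangle inequality, which forces $|\psi_1|$ and $|\psi_2|$ proportional and of the same shape, hence $\psi=v_\star\,\chi_0$. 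I would suggest redoing the proof along these lines rather than via the angular decomposition; the latter simply does not have enough leverage on the $\mathrm L^p$ side. (Two minor remarks: $\beta\in[0,1)$ corresponds to $p\in(2,6]$, so the Hardy endpoint $p=2$ you discuss lies outside the lemma's scope; and the case $\beta=0$, i.e.\ $p=6$, needs a separate treatment of the equality cases because scalar optimizers are then only unique up to translations as well as dilations.)
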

\begin{proof}
We notice that, for any $\psi=\begin{psmallmatrix}\psi_1\\\psi_2\end{psmallmatrix}\in\mathrm H^1(\R^3,\C^2)$, we have
$$
\int_{\R^3} |\sigma\cdot\nabla\psi|^2\,dx = \int_{\R^3} |\nabla\psi|^2\,dx \,,
$$
where we use the notation $|\nabla\psi|^2 = \sum_{j=1}^3 \sum_{k=1}^2|\partial_{\!j}\psi_k|^2$. We now use the scalar inequality~\eqref{CKN} with $b=\beta$ and note that $C_{0,\beta,3}=\mathcal C^\star_{0,\beta}$ because optimizers of~\eqref{CKN} are radial according to~\cite{MR1223899, MR1731336, MR2560127} if $a\in[0,1/2)$ and $b\in[a,a+1)$, eventually up to translation if $(a,b)=(0,0)$. In this way we obtain for $k=1$, $2,$
$$
\int_{\R^3} |\nabla\psi_k|^2\,dx \geq \mathcal C^\star_{0,\beta} \left( \int_{\R^3} \frac{|\psi_k(x)|^p}{|x|^{\beta\,p}}\,dx \right)^{2/p}.
$$
Thus, we have shown that
$$
\int_{\R^3} |\sigma\cdot\nabla\psi|^2\,dx
\geq \mathcal C^\star_{0,\beta} \left( \big\||x|^{-\beta}\,\psi_1\big\|_{\mathrm L^p(\R^3)}^2 + \big\||x|^{-\beta}\,\psi_2\big\|_{\mathrm L^p(\R^3)}^2 \right)\,.
$$
Since, by the triangle inequality in $\mathrm L^{p/2}(\R^3, \C^2)$,
\begin{equation*}
\begin{aligned}
\big\||x|^{-\beta}\,\psi_1\big\|_{\mathrm L^p(\R^3)}^2 + \big\||x|^{-\beta}\,\psi_2\big\|_{\mathrm L^p(\R^3)}^2
& = \big\||x|^{-2\,\beta}\,|\psi_1|^2\big\|_{\mathrm L^{p/2}(\R^3)} + \big\||x|^{-2\,\beta}\,|\psi_2|^2 \big\|_{\mathrm L^{p/2}(\R^3)} \\
& \geq \big\||x|^{-2\,\beta}\,\big(|\psi_1|^2 + |\psi_2|^2\big)\big\|_{\mathrm L^{p/2}(\R^3)} = \big\||x|^{-\beta}\,\psi \big\|_{\mathrm L^p(\R^3)}^2 \,,
\end{aligned}
\end{equation*}
we have shown Inequality~\eqref{SCKN} with constant $\mathcal C_{0,\beta}=\mathcal C^\star_{0,\beta}$.

Now let us show the stronger statement that every optimizer $\psi$ for the inequality is symmetric. For clarity we first consider the case $\beta>0$. The scalar inequality~\eqref{CKN} needs to be saturated for both scalar functions $\psi_1$ and $\psi_2$, so there are $c_k\in\C$ and $a_k>0$ such that
$$
\psi_k(x) = c_k\,v_\star(x/a_k)\quad\mbox{with}\quad\,k=1\,,\;2\,.
$$
Here $v_\star$ is an explicit, radial, positive function, and we can assume that $a_1=1$ without loss of generality. We used here the characterization of cases of equality for~\eqref{CKN} for $a=0$. Equality in the triangle inequality in $\mathrm L^{p/2}(\R^3, \C^2)$ implies that either $|x|^{-2\,\beta}\,|\psi_2|^2=0$ or that there is a $c\geq0$ such that $|x|^{-2\,\beta}\,|\psi_1|^2=c\,|x|^{-2\,\beta}\,|\psi_2|^2$. Clearly, this is the same as either having $c_2=0$ or $|c_1|\,v_\star(x/a_1) = \sqrt c\,|c_2|\,v_\star(x/a_2)$ for all $x$. By the properties of $v_\star$ we see that the second option is equivalent to $a_1=a_2$ and $|c_1|=\sqrt c\,|c_2|$. Thus, in either case we have
$$
\psi(x) = v_\star(x) \, \chi\quad\forall\,x\in\R^3\,,
$$
where $\chi := \begin{psmallmatrix} c_1 \\ c_2 \end{psmallmatrix}\in\C^2$ is a constant spinor. This proves the symmetry of any optimizer when $\beta>0$. The argument in the case $\beta=0$ is similar, but one needs to take translations into account as well. We omit the details.
\end{proof}
\begin{remark}\label{Rem:ComplexCKN}
In the previous proof we assumed implicitly that the scalar inequality~\eqref{CKN} holds for complex functions with the same constant as for real functions. This is well known and can be proved with essentially the same argument as in the above proof, where the real inequality is applied to $\Re u$ and $\Im u$. Alternatively, one can repeat the proof with four real functions $\Re\,\psi_1$, $\Im\,\psi_1$, $\Re\,\psi_2$, $\Im\,\psi_2$, based only on the real version of Inequality~\eqref{CKN}.
\end{remark}

The result of Lemma~\ref{Lem:alpha=0alt} can be rephrased into an equivalent result in logarithmic variables: for any $p\in(2,6]$, we have $C_{0,p}=C^\star_{0,p}$ and if $p\in(2,6)$ optimizers of~\eqref{logarithmicSCKN} are given by~\eqref{eq:minsymm} up to translation in $s$ and multiplication by a constant spinor in $\C^2$. Among symmetric functions, when $\alpha\geq-1/2$ Inequality~\eqref{logarithmicSCKN} is reduced to the one-dimensional Gagliardo-Nirenberg inequality
\be{OneD}
\nrmr{u'}2^2+\lambda\,\nrmr u2^2\ge K_{\lambda,p}\,\nrmr up^2
\ee
with $\lambda=\lambda(\alpha)=(\alpha-1/2)^2$, $K_{\lambda,p}=C^\star_{\alpha,p}$ and $p$ given by~\eqref{param}. See Appendix~\ref{App00} for details. The rescaling $s\mapsto\sqrt\lambda\,s$ shows that $K_{\lambda,p}=\lambda^{(p+2)/(2\,p)}\,K_{1,p}$. For any $t\ge1$, let
\[
\alpha_t:=\tfrac12-\big(\tfrac12-\alpha\big)\,t
\]
so that $\lambda(\alpha_t)=\lambda(\alpha)\,t^2$ and, as a consequence,
\be{star:t}
C^\star_{\alpha_t,p}=C^\star_{\alpha,p}\,t^{1+\frac2p}\,.
\ee
We refer to Proposition~\ref{symmetricproblem} in Appendix~\ref{App00} for more details. It turns out that we can also use a scaling in the $s=\log|x|$ direction in~\eqref{logarithmicSCKN} even for non-symmetric functions. Here is how one can argue, as in~\cite{MR2508844} or in~\cite[Lemma~4.1]{MR2560127} in the scalar case. Let us define the functional
\be{global-functional-cylinder}
\mathcal G_\alpha[\phi]:=\frac{\nrmc{\partial_s\phi}2^2+\big\|\big(\sigma\cdot L+\tfrac12-\alpha\big)\,\phi\|_{\mathrm L^2(\R\times\Sp)}^2}{\nrmc{\phi}p^2}\,.
\ee
\begin{proposition}\label{Prop:alonglines} Assume that either $(\alpha,p)\in(-1/2,1/2)\times(2,6)$ or $(\alpha,p)\in(1/2,3/2)\times(2,6)$. If one of the two conditions is satisfied:
\begin{enumerate}
\item[\rm(i)] either $C_{\alpha,p}<C^\star_{\alpha,p}$
\item[\rm(ii)] or there is a non-symmetric function $\phi\in\mathrm H^1(\R^3,\C^2)$ such that $\mathcal G_{\alpha}[\phi]=C_{\alpha,p}=C^\star_{\alpha,p}$
\end{enumerate}
then we have
\[
C_{\alpha',p}<C^\star_{\alpha',p}\quad\forall\,\alpha'\in[-1/2,\alpha)\quad\mbox{if}\quad\alpha<1/2\quad\mbox{and}\quad
\quad\forall\alpha'\in(\alpha,3/2)\quad\mbox{if}\quad\alpha>1/2\,.
\]
\end{proposition}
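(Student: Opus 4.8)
The plan is to eliminate the scale invariance in the variable $s$ and thereby concentrate the entire $\alpha$-dependence of $\mathcal G_\alpha$ into a single elementary, monotone factor; this is the spinorial counterpart of the arguments of~\cite{MR2508844} and~\cite[Lemma~4.1]{MR2560127}. \emph{Step~1 (reduction by $s$-scaling).} For $\phi\in\mathrm H^1(\R\times\Sp,\C^2)$ set $A(\phi):=\nrmc{\partial_s\phi}2^2$, $G_\alpha(\phi):=\nrmc{\big(\sigma\cdot L+\tfrac12-\alpha\big)\phi}2^2$ and $E(\phi):=\nrmc\phi p^2$, so that $\mathcal G_\alpha[\phi]=\big(A(\phi)+G_\alpha(\phi)\big)/E(\phi)$. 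For $\tau>0$ let $\phi_\tau(s,\omega):=\phi(s/\tau,\omega)$; a direct computation gives $A(\phi_\tau)=\tau^{-1}A(\phi)$, $G_\alpha(\phi_\tau)=\tau\,G_\alpha(\phi)$, $E(\phi_\tau)=\tau^{2/p}E(\phi)$, and $\tau\mapsto\mathcal G_\alpha[\phi_\tau]=E(\phi)^{-1}\big(\tau^{-1-2/p}A(\phi)+\tau^{1-2/p}G_\alpha(\phi)\big)$ has (since $p>2$) a unique critical point, which is its global minimum. An elementary minimization yields
\[
\inf_{\tau>0}\mathcal G_\alpha[\phi_\tau]=\kappa_p\,\frac{A(\phi)^{\theta}\,G_\alpha(\phi)^{1-\theta}}{E(\phi)}\,,\qquad\theta:=\tfrac{p-2}{2\,p}\in(0,1)\,,
\]
for some explicit $\kappa_p>0$ depending only on $p$, whence $C_{\alpha,p}=\kappa_p\inf_\phi A(\phi)^\theta G_\alpha(\phi)^{1-\theta}/E(\phi)$. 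On the two strips under consideration $\alpha>-1/2$, so symmetric spinors are of type $\mathcal H_0$: there $\sigma\cdot L\,\phi=0$, hence $G_\alpha(\phi)=(\tfrac12-\alpha)^2\,\nrmc\phi2^2$, and the restricted infimum equals $C^\star_{\alpha,p}=\big|\tfrac12-\alpha\big|^{(p+2)/p}\,K_{1,p}$, in accordance with~\eqref{star:t}.

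\emph{Step~2 (a trial function).} Choose $\phi$ with $\mathcal G_\alpha[\phi]=C_{\alpha,p}$: in case~(ii) it is given, in case~(i) it exists by Theorem~\ref{Thm:Existence} (note $\alpha\notin\Lambda$ on both strips; a minimizing sequence would do just as well). Being a minimizer of $\mathcal G_\alpha$, $\phi$ also minimizes $\tau\mapsto\mathcal G_\alpha[\phi_\tau]$, so $C_{\alpha,p}=\kappa_p A(\phi)^\theta G_\alpha(\phi)^{1-\theta}/E(\phi)$; using $\phi$ as a trial function for $\mathcal G_{\alpha'}$ and optimizing its scale again gives
\[
\frac{C_{\alpha',p}}{C_{\alpha,p}}\le\left(\frac{G_{\alpha'}(\phi)}{G_\alpha(\phi)}\right)^{1-\theta}\,,\qquad\text{whereas}\qquad\frac{C^\star_{\alpha',p}}{C^\star_{\alpha,p}}=\left(\frac{(\tfrac12-\alpha')^2}{(\tfrac12-\alpha)^2}\right)^{1-\theta}\,.
\]
Here all denominators are positive: $A(\phi),E(\phi)>0$ for $\phi\neq0$, while $G_\alpha(\phi),G_{\alpha'}(\phi)>0$ because $\sigma\cdot L+\tfrac12-\alpha$ and $\sigma\cdot L+\tfrac12-\alpha'$ have trivial kernel for the parameter values considered.

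\emph{Step~3 (the monotone factor and conclusion).} Decompose $\phi=\sum_{k\in\Z\setminus\{-1\}}\phi^{(k)}$ into eigenspaces of $\sigma\cdot L$ (Appendix~\ref{App5}) and put $\nu:=\tfrac12-\alpha$, $\nu':=\tfrac12-\alpha'$, so $G_\alpha(\phi)=\sum_k(k+\nu)^2\,\nrmc{\phi^{(k)}}2^2$. The inequality $G_{\alpha'}(\phi)/\nu'^2\le G_\alpha(\phi)/\nu^2$ is then equivalent to
\[
\sum_{k}g_k(\nu')\,\nrmc{\phi^{(k)}}2^2\le\sum_{k}g_k(\nu)\,\nrmc{\phi^{(k)}}2^2\,,\qquad g_k(t):=\Big(1+\frac kt\Big)^2\,,
\]
and the point is that $g_k'(t)=-2\,k\,(t+k)/t^3$ has the sign of $-t$ for every $k\in\Z\setminus\{-1,0\}$ and every $t\in(-1,0)\cup(0,1]$ — indeed $|k|\ge2>|t|$, so $t+k$ has the sign of $k$ and $k(t+k)>0$ — while $g_0\equiv1$. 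Thus each $g_k$ is non-increasing on $(0,1]$ and non-decreasing on $(-1,0)$, strictly so if $k\neq0$. If $\alpha<1/2$ then $\nu,\nu'\in(0,1]$ with $\nu<\nu'$ (because $\alpha'<\alpha$), and if $\alpha>1/2$ then $\nu,\nu'\in(-1,0)$ with $\nu'<\nu$ (because $\alpha'>\alpha$); in either case $g_k(\nu')\le g_k(\nu)$ for all $k$, so the displayed inequality holds. Combining with Step~2 yields $C_{\alpha',p}/C^\star_{\alpha',p}\le C_{\alpha,p}/C^\star_{\alpha,p}$. In case~(i) the right-hand side is $<1$ and we conclude. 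In case~(ii) it equals $1$, and we need the inequality to be strict; this holds as soon as $\phi^{(k)}\neq0$ for some $k\neq0$, since then $g_k(\nu')<g_k(\nu)$. If instead $\phi=\phi^{(0)}$, then $\phi(s,\cdot)\in\mathcal H_0$ for a.e.\ $s$, and the argument of Lemma~\ref{Lem:alpha=0alt} — Minkowski's inequality in $\mathrm L^{p/2}$ together with the rigidity of the equality cases in~\eqref{OneD} — forces $\phi$ to be symmetric, contradicting the hypothesis of~(ii). Hence $C_{\alpha',p}<C^\star_{\alpha',p}$ in all cases.

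\emph{Main obstacle.} The delicate point is the \emph{uniform} monotonicity of the $g_k$'s over all $k\in\Z\setminus\{-1\}$ in Step~3. Since $\sigma\cdot L$ is unbounded below, the eigenvalues $k\le-2$ occur, and $t\mapsto g_k(t)$ is not monotone on all of $(0,\infty)$; it is monotone on the relevant interval precisely when $|\nu|\le1$, i.e.\ when $\alpha\in(-1/2,1/2)$ or $\alpha\in(1/2,3/2)$. This is exactly why the statement must be split into the cases $\alpha<1/2$ and $\alpha>1/2$, and it has no analogue in the scalar problem, where the corresponding functions $t\mapsto1+\ell(\ell+1)/t$ are monotone for all $t>0$.
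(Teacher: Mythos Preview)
Your proof is correct and follows essentially the same strategy as the paper's: rescale in $s$, decompose spectrally in $\sigma\cdot L$, and use a monotonicity in the resulting coefficients. The packaging is slightly different—the paper keeps the additive form and computes directly
\[
\mathcal G_{\alpha_t}[\phi_t]-t^{1+2/p}\,\mathcal G_\alpha[\phi]=t^{2/p-1}(1-t)\sum_k h(k,t)\,\|\phi_k\|_2^2,\qquad h(k,t)=(1+t)k^2+(1-2\alpha)kt,
\]
whereas you first optimize out the scale to reduce to the multiplicative form $\kappa_p\,A^\theta G_\alpha^{1-\theta}/E$ and then compare $G_{\alpha'}/\nu'^2$ with $G_\alpha/\nu^2$. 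A short computation shows your monotonicity condition $g_k(\nu')\le g_k(\nu)$ is algebraically equivalent to the paper's $h(k,t)\ge 0$ (with $\nu'=t\,\nu$), so the two arguments coincide at the decisive step. Your version has the small advantage of separating the scale optimization from the $\alpha$-monotonicity cleanly.

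Two minor remarks. First, the justification ``$|k|\ge 2>|t|$'' in Step~3 is literally false for $k=1$; what you actually need (and what holds) is $|t|<1\le|k|$ for every $k\in\Z\setminus\{-1,0\}$, so $t+k$ has the sign of $k$. Second, your handling of case~(ii) is more explicit than the paper's: the paper simply records in a remark before the proposition that ``non-symmetric'' is taken to mean $\|\sigma\cdot L\,\phi\|_2\neq 0$, which tacitly uses the same Minkowski/rigidity argument you spell out to exclude a minimizer with $\phi=\phi^{(0)}$ but $\phi$ not of the form $u(s)\chi_0$.
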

\noindent Here a non-symmetric $\phi$ means that $\phi$ is not symmetric of type $\mathcal H_0$ and consequently $\nrmc{\sigma\cdot L \,\phi}2\neq0$. Proposition~\ref{Prop:alonglines} combined with Lemma~\ref{Lem:alpha=0alt} extends for instance the range of symmetry to any $\alpha\in[0,1/2)$. However it has further consequences as we shall see in Corollary~\ref{Cor:green}.
\begin{proof} Assume that $\alpha\in(-1/2,1/2)$. For any $\phi\in\mathrm H^1(\R^3,\C^2)$, let $\phi_t(s,\omega):=\phi(t\,s,\omega)$ for any real constant $t>1$ and $\alpha_t:=\tfrac12-\big(\tfrac12-\alpha\big)\,t$, such that $\tfrac12-\alpha_t=\big(\tfrac12-\alpha\big)\,t$. Changing variables in~\eqref{global-functional-cylinder}, we obtain
\[
\mathcal G_{\alpha_t}[\phi_t]=t^{\frac2p-1}\,\frac{t^2\,\nrmc{\partial_s\phi}2^2+\big\|\big(\sigma\cdot L+\big(\tfrac12-\alpha\big)\,t\big)\,\phi\|_{\mathrm L^2(\R\times\Sp)}^2}{\nrmc{\phi}p^2}
\]
and, as a consequence,
\begin{multline*}
\nrmc{\phi}p^2\(\mathcal G_{\alpha_t}[\phi_t]-t^{1+\frac2p}\,\mathcal G_\alpha[\phi]\)\\
=t^{\frac2p-1}\,(1-t)\((1+t)\,\nrmc{ \sigma\cdot L \,\phi}2^2+t\,(1-2\,\alpha)\is{\lrangle\phi{\sigma\cdot L \,\phi}}\)\,.
\end{multline*}
Let $\phi=\sum_{k\in\Z\setminus\{-1\}}\phi_k$ be the spectral decomposition (see Appendix~\ref{App5}) of $\phi$ on the eigen\-spaces of $\sigma\cdot L$ such that $\sigma\cdot L\,\phi_k=k\,\phi_k$. We have
\[
\mathcal R:=(1+t)\,\nrmc{\sigma\cdot L \,\phi}2^2+t\,(1-2\,\alpha)\is{\lrangle\phi{ \sigma\cdot L \,\phi}}=\sum_{k\in\Z\setminus\{-1\}}\nrmc{\phi_k}2^2\,h(k,t)
\]
with $h(k,t):=(1+t)\,k^2+(1-2\,\alpha)\,k\,t$. The assumption $\alpha\in(-1/2,1/2)$ implies that $h(k,t)$ is non-negative if $(k,t)\in (\Z\setminus\{-1\})\times(1,+\infty)$ and vanishes if and only if $k=0$. We deduce that $\mathcal R\ge0$ and therefore
\[
\mathcal G_{\alpha_t}[\phi_t]-t^{1+\frac2p}\,\mathcal G_\alpha[\phi]=t^{\frac2p-1}\,(1-t)\,\mathcal R\le0\quad\forall\,t>1\,.
\]
In case (i) let $\phi\in\mathrm H^1(\R^3,\C^2)$ be such that $C_{\alpha,p}\le\mathcal G_{\alpha}[\phi]<C^\star_{\alpha,p}$. It is then clear that $\mathcal R$ is positive because otherwise we have $\phi(s,\cdot)\in\mathcal H_0$ for a.e.~$s\in\R$, which would imply that~$\phi$ is symmetric and consequently $\mathcal G_{\alpha}[\phi]\ge C^\star_{\alpha,p}$, contradicting the choice of $\phi$. In case (ii), we apply the argument with the given non-symmetric function $\phi$ and not that this non-symmetry implies $\mathcal R>0$. Thus, in either case we find a function $\phi$ such that
\[
\mathcal G_{\alpha_t}[\phi_t]-t^{1+\frac2p}\,\mathcal G_\alpha[\phi]<0\quad\forall\,t>1\,.
\]
Now for any $\alpha'$ such that $-\,1/2\le\alpha'<\alpha<1/2$, let $t=\frac{1-2\,\alpha'}{1-2\,\alpha}>1$, i.e., $\alpha'=\alpha_t$. Altogether we obtain
\[
C_{\alpha',p}\le\mathcal G_{\alpha_t}[\phi_t]<t^{1+\frac2p}\,\mathcal G_{\alpha}[\phi]\le t^{1+\frac2p}\,C^\star_{\alpha,p}=C^\star_{\alpha',p} \,,
\]
where the equality follows from~\eqref{star:t}. This completes the proof if $\alpha\in(-1/2,1/2)$.

The case $\alpha\in(1/2,3/2)$ is similar, except that $\alpha_t:=\tfrac12-\big(\tfrac12-\alpha\big)\,t>\alpha$ for any $t>1$.
\end{proof}

As a consequence of Lemma~\ref{Lem:alpha=0alt} and Proposition~\ref{Prop:alonglines}, symmetry in~\eqref{SCKN} (and therefore also in~\eqref{logarithmicSCKN}) holds for any $\alpha\in[0,1/2)$. We can state a slightly better result as follows, in the spirit of~\cite[Theorem~1.2]{MR2560127}.
\begin{corollary}\label{Cor:green} There is a function $\alpha_\star:(2,6)\to[-1/2,0]$ such that, for any $p\in(2,6)$,
\begin{enumerate}
\item[\rm(i)] $C_{\alpha,p}<C^\star_{\alpha,p}$ if $-\,1/2\le\alpha<\alpha_\star(p)$ and $C_{\alpha,p}=C^\star_{\alpha,p}$ if $\alpha_\star(p)\le\alpha<1/2$,
\item[\rm(ii)] $C_{\alpha,p}<C^\star_{\alpha,p}$ if $-\,\big(1+\alpha_\star(p)\big)<\alpha\le-\,1/2$ and $C_{\alpha,p}=C^\star_{\alpha,p}$ if $\,-\,3/2<\alpha\le-\,\big(1+\alpha_\star(p)\big)$.
\end{enumerate}
Moreover, any optimizer is symmetric if $\alpha\in\big(-3/2,-\,\big(1+\alpha_\star(p)\big)\big)\cup\big(\alpha_\star(p),1/2\big)$.
\\[4pt]
Similarly, there is a function $\alpha^\star:(2,6)\to[1/2,3/2]$ such that, for any $p\in(2,6)$,
\begin{enumerate}
\item[\rm(i)] $C_{\alpha,p}<C^\star_{\alpha,p}$ if $\alpha^\star(p)<\alpha<3/2$ and $C_{\alpha,p}=C^\star_{\alpha,p}$ if $\,1/2<\alpha\le\alpha^\star(p)$,
\item[\rm(ii)] $C_{\alpha,p}<C^\star_{\alpha,p}$ if $\,-\,5/2<\alpha<-\,\big(1+\alpha^\star(p)\big)$ and $C_{\alpha,p}=C^\star_{\alpha,p}$ if $\,-\,\big(1+\alpha^\star(p)\big)\le\alpha<-\,3/2$.
\end{enumerate}
Moreover, any optimizer is symmetric if $\alpha\in\big(-\,\big(1+\alpha^\star(p)\big),-3/2\big)\cup\big(1/2,\alpha^\star(p)\big)$.
\end{corollary}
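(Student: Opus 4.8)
The plan is to read off each of $\alpha_\star$ and $\alpha^\star$ as the endpoint of the symmetry-breaking set on the appropriate strip of $\alpha$-values, using Lemma~\ref{Lem:alpha=0alt} as a seed, Proposition~\ref{Prop:alonglines} to force an interval structure, a soft semicontinuity argument for the endpoints, and the reflection $(\alpha,p)\mapsto(-(\alpha+1),p)$ to pass from $\alpha\ge-1/2$ to $\alpha\le-1/2$. Fix $p\in(2,6)$ and set $\mathcal A_p:=\{\alpha\in[-1/2,1/2):C_{\alpha,p}<C^\star_{\alpha,p}\}$. The logarithmic reformulation of Lemma~\ref{Lem:alpha=0alt} gives $C_{0,p}=C^\star_{0,p}$, so $0\notin\mathcal A_p$. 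By Proposition~\ref{Prop:alonglines}(i), if $\alpha''\in\mathcal A_p$ then $C_{\alpha',p}<C^\star_{\alpha',p}$ for every $\alpha'\in[-1/2,\alpha'')$, so $\mathcal A_p$ is downward closed in $[-1/2,1/2)$ and, omitting $0$, is contained in $[-1/2,0)$. Hence $\mathcal A_p$ is an interval with left endpoint $-1/2$, and I put $\alpha_\star(p):=\sup\mathcal A_p\in[-1/2,0]$ (with $\sup\emptyset:=-1/2$).

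Next I would show that $\mathcal A_p$ is relatively open, which closes its right endpoint. Recall that $C^\star_{\alpha,p}=|\alpha-\tfrac12|^{(p+2)/p}\,K_{1,p}$ is continuous in $\alpha$ and positive for $\alpha\neq1/2$, and that for fixed $\phi\in\mathrm H^1(\R\times\Sp,\C^2)$ the Rayleigh quotient $\mathcal G_\alpha[\phi]$ in~\eqref{global-functional-cylinder} is a quadratic polynomial in $\alpha$. If $C_{\alpha_0,p}<C^\star_{\alpha_0,p}$, choose $\phi_0$ with $\mathcal G_{\alpha_0}[\phi_0]<C^\star_{\alpha_0,p}$; then $C_{\alpha,p}\le\mathcal G_\alpha[\phi_0]<C^\star_{\alpha,p}$ for $\alpha$ near $\alpha_0$. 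Therefore $\mathcal A_p=[-1/2,\alpha_\star(p))$, which is exactly item (i) for $\alpha_\star$ on $[-1/2,1/2)$; in particular $C_{\alpha_\star(p),p}=C^\star_{\alpha_\star(p),p}$. For the ``moreover'' clause: if some $\alpha\in(\alpha_\star(p),1/2)$ had a non-symmetric optimizer $\phi$, then $\mathcal G_\alpha[\phi]=C_{\alpha,p}=C^\star_{\alpha,p}$, and case (ii) of Proposition~\ref{Prop:alonglines} would force $C_{\alpha',p}<C^\star_{\alpha',p}$ for all $\alpha'\in[-1/2,\alpha)$, contradicting $\mathcal A_p=[-1/2,\alpha_\star(p))$ once $\alpha_\star(p)<\alpha'<\alpha$; so all optimizers there are symmetric.

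The $\alpha^\star$-part runs in the same way on the strip $(1/2,3/2)$, now via the second conclusion of Proposition~\ref{Prop:alonglines} (propagation toward $3/2$). First, $\mathcal B_p:=\{\alpha\in(1/2,3/2):C_{\alpha,p}<C^\star_{\alpha,p}\}$ is nonempty: since $3/2\in\Lambda$ we have $C_{3/2,p}=0$ by Proposition~\ref{validity} while $C^\star_{3/2,p}=K_{1,p}>0$, and choosing $\phi$ with $\mathcal G_{3/2}[\phi]$ small and using continuity of $\alpha\mapsto\mathcal G_\alpha[\phi]$ gives $C_{\alpha,p}<C^\star_{\alpha,p}$ on a left-neighborhood of $3/2$. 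As in the left strip, $\mathcal B_p$ is relatively open and now upward closed, hence $\mathcal B_p=(\alpha^\star(p),3/2)$ with $\alpha^\star(p):=\inf\mathcal B_p\in[1/2,3/2)$, and all optimizers on $(1/2,\alpha^\star(p))$ are symmetric by the same argument; the strict bound $\alpha^\star(p)<3/2$ is exactly what is needed so that the reflected interval in item (ii) avoids the point $-5/2\in\Lambda$, where the claimed equality would be false.

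Finally I would transport items (i) into items (ii) and the two symmetry clauses through the reflection $(\alpha,p)\mapsto(-(\alpha+1),p)$: one has $C_{-(\alpha+1),p}=C_{\alpha,p}$ and $C^\star_{-(\alpha+1),p}=C^\star_{\alpha,p}$, the involution $\phi\mapsto(\sigma\cdot\omega)\,\phi$ is a bijection between optimizers for the two parameter values that carries symmetric spinors of type $\mathcal H_0$ onto those of type $\mathcal H_{-2}$ (hence non-symmetric onto non-symmetric), and the intervals $[-1/2,1/2)$ and $(1/2,3/2)$ map onto $(-3/2,-1/2]$ and $(-5/2,-3/2)$ with $\alpha_\star(p)$ and $\alpha^\star(p)$ sent to $-(1+\alpha_\star(p))$ and $-(1+\alpha^\star(p))$. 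Once Proposition~\ref{Prop:alonglines} is available the argument is essentially bookkeeping; the points I expect to require the most care are the soft semicontinuity step that closes the right endpoint in item (i) (so that $C_{\alpha_\star(p),p}=C^\star_{\alpha_\star(p),p}$), and the use of $3/2\in\Lambda$ to keep $\alpha^\star(p)$ strictly below $3/2$.
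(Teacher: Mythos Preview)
Your proposal is correct and follows essentially the same approach as the paper: define $\alpha_\star(p)$ as the boundary of the symmetry set on $(-1/2,1/2)$ using Lemma~\ref{Lem:alpha=0alt} as a seed, invoke Proposition~\ref{Prop:alonglines} for the interval structure and the ``moreover'' clause, and use the reflection $\alpha\mapsto -(\alpha+1)$ for part~(ii). You are simply more explicit than the paper on two points the paper leaves implicit: the semicontinuity argument showing the symmetry-breaking set is open (so that $C_{\alpha_\star(p),p}=C^\star_{\alpha_\star(p),p}$), and the use of $3/2\in\Lambda$ to guarantee $\alpha^\star(p)<3/2$, which is indeed needed for item~(ii) to hold at the left endpoint.
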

\begin{proof} It is easy to see from~Proposition~\ref{Prop:alonglines} that for a given $p\in(2,6)$, $\alpha_\star(p)$ can be defined as the infimum of the set $\big\{\alpha\in(-1/2,1/2)\,:\,C_{\alpha,p}=C^\star_{\alpha,p}\big\}\supset[0,1/2)$ where the inclusion follows from Lemma~\ref{Lem:alpha=0alt}. The existence of a non-symmetric optimizer for some $\alpha\in\big(\alpha_\star(p),1/2\big)$ would contradict Proposition~\ref{Prop:alonglines}, (ii). The results of (ii) for $\alpha\in(-1,-1/2)$ follow from (i) using~\eqref{logarithmicalt}. A similar discussion holds if $\alpha\in(-5/2,-3/2)\cup(1/2,3/2)$.\end{proof}

\section{A second symmetry result}\label{Sec:secondsymmetry}

The main result of this section in Theorem~\ref{Thm:symmetry1}, which gives a region where optimizers of~\eqref{logarithmicSCKN} are symmetric. An important ingredient in our proof is a Gagliardo-Nirenberg interpolation inequality for spinors on the sphere, which is interesting by itself and which is proved in the first subsection.

\subsection{A Gagliardo-Nirenberg interpolation inequality for spinors on the sphere}

Let us consider the operator
$$
\mathcal K_\alpha:=(\sigma\cdot L)^2+(1-2\,\alpha)\,\sigma\cdot L
$$
acting on angular spinors on $\Sp$ and define
\[
\mathsf m(\alpha):=\min\big\{1-\alpha,1+2\,\alpha\big\}
\]
such that $\mathsf m(\alpha)=1+2\,\alpha$ if $\alpha\in(-1/2,0]$ and $\mathsf m(\alpha)=1-\alpha$ if $\alpha\in[0,1)$, and
\be{q}
\mathsf q(\alpha):=4-\frac6\alpha\quad\mbox{if}\quad\alpha\in(-1/2,0)\,,\quad\mathsf q(0):=+\infty\quad\mbox{and}\quad\mathsf q(\alpha):=2+\frac2\alpha\quad\mbox{if}\quad\alpha\in(0,1)\,.
\ee
\begin{theorem}\label{thm:BecknerGenSpinor} Let $\alpha\in(-1/2,1)$ and $q\in(2,\infty)$. Then there is a constant $\mathcal B_{\alpha,q}>0$ such that for all $\Phi\in\mathrm H^1(\Sp,\C^2)$, we have
\be{Ineq:GenPoincare}
\is{\lrangle\Phi{\mathcal K_\alpha\,\Phi}}\ge \frac{\mathcal B_{\alpha,q}}{q-2}\(\nrms\Phi q^2-\nrms\Phi 2^2\)\,.
\ee
Moreover, if $q\leq \mathsf q(\alpha)$, then the inequality holds with $\mathcal B_{\alpha,q} = 2 \, \mathsf m(\alpha)$ and equality holds if and only if $\,\Phi\in\mathcal H_0$, that is, if and only if $\,\Phi$ is a constant spinor.
\end{theorem}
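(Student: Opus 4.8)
The plan is to reduce the spinorial inequality~\eqref{Ineq:GenPoincare} to the corresponding \emph{scalar} Gagliardo--Nirenberg inequality on $\Sp$, for which sharp versions are available (Beckner-type inequalities; see the references invoked in the companion appendix), and then to track the best constant through the reduction. The point is that $\mathcal K_\alpha$ acts block-diagonally on the decomposition $\mathrm L^2(\Sp,\C^2;d\omega)=\bigoplus_k\mathcal H_k$, with eigenvalue $k^2+(1-2\,\alpha)\,k$ on $\mathcal H_k$. Normalizing so that the bottom eigenspace $\mathcal H_0$ gives $0$, the relevant spectral gap is $\min\{k^2+(1-2\,\alpha)\,k : k\in\Z\setminus\{-1,0\}\}$, which for $\alpha\in(-1/2,1)$ equals $\min\{1-2\,\alpha+1,\ 1+2\,\alpha-1,\ \ldots\}$; the two competing values come from $k=1$ (giving $2-2\,\alpha$, halved to $1-\alpha$) and $k=-2$ (giving $4-2(1-2\,\alpha)=2+4\,\alpha$, halved to $1+2\,\alpha$), which is exactly $2\,\mathsf m(\alpha)$. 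So the first step is: establish the clean lower bound $\is{\lrangle\Phi{\mathcal K_\alpha\,\Phi}}\ge 2\,\mathsf m(\alpha)\,\big(\nrms\Phi 2^2-\nrms{\Pi_0\Phi}2^2\big)$, where $\Pi_0$ is the orthogonal projection onto $\mathcal H_0$, with equality iff $\Phi\in\mathcal H_0\oplus(\text{the eigenspace realizing the minimum})$.

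Second, I would exploit the structure of $\mathcal H_0$ and $\mathcal H_1$ (resp.\ $\mathcal H_0$ and $\mathcal H_{-2}$). Since $\mathcal H_0$ consists of constant spinors, write $\Phi=\chi_0+\Phi^\perp$ with $\chi_0\in\C^2$ constant and $\Phi^\perp\perp\mathcal H_0$. The idea, borrowed in spirit from~\cite{MR2905786}, is to pass to \emph{pointwise} scalar quantities via Minkowski's inequality. Concretely, one wants a pointwise (in $\omega$) inequality bounding $|\Phi(\omega)|^2$ in terms of $|\chi_0|^2$ and a ``fluctuation'' term, and then apply the scalar GN/Beckner inequality on $\Sp$ to the scalar function $|\Phi|$ or to each component. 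The cleanest route: for each fixed unit spinor, $\lrangle{\chi_0}{\Phi(\omega)}$ is a scalar $\mathrm H^1(\Sp)$ function, and the Dirichlet form $\is{\lrangle\Phi{\mathcal K_\alpha\Phi}}$ controls $\sum_j\is{|\nabla\langle e_j,\Phi\rangle|^2}$-type quantities up to the $\alpha$-dependent twist. Here is where Minkowski's inequality enters: to convert the sum over a spinor basis of $\mathrm L^{q/2}$-type norms into a single $\mathrm L^{q/2}$ norm of $|\Phi|^2$, exactly as in the proof of Lemma~\ref{Lem:alpha=0alt} but now on $\Sp$ rather than on $\R^3$. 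Applying the scalar inequality $\is{|\nabla f|^2}\ge\frac{\mathcal B^{\rm scal}_q}{q-2}\big(\nrms f q^2-\nrms f 2^2\big)$ to each scalar component and summing, then using Minkowski to recombine, yields~\eqref{Ineq:GenPoincare}.

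Third, for the sharp-constant claim when $q\le\mathsf q(\alpha)$: the threshold $\mathsf q(\alpha)$ is precisely the exponent up to which the scalar inequality on $\Sp$ holds with constant equal to its spectral gap (the scalar gap on $\Sp$ is $d=2$, and one matches $2\,\mathsf m(\alpha)$ against the normalized scalar constant after rescaling by the factor coming from the $\alpha$-twist — the two formulas $4-6/\alpha$ and $2+2/\alpha$ are exactly the solutions of the matching equations for $\alpha<0$ and $\alpha>0$). So the plan is: for $q\le\mathsf q(\alpha)$, combine the spectral lower bound from the first step (constant $2\,\mathsf m(\alpha)$ on the orthogonal complement of $\mathcal H_0$) with the fact that for such $q$ the scalar $\mathrm L^q$-vs-$\mathrm L^2$ deficit on the complement of constants is itself controlled by the $\mathrm L^2$ deficit times the gap — i.e.\ the "linear" (spectral) inequality is already sharp and implies the nonlinear one. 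Equality analysis: equality forces equality in Minkowski (so all components proportional, i.e.\ $\Phi(\omega)=g(\omega)\,\chi$ for a fixed $\chi\in\C^2$ and scalar $g$) and equality in the scalar inequality at exponent $q\le\mathsf q(\alpha)$, which (by the known rigidity for the scalar Beckner inequality below its critical exponent) forces $g$ constant; hence $\Phi\in\mathcal H_0$.

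\medskip\noindent\textbf{Main obstacle.} The delicate point is the second step: the operator $\mathcal K_\alpha$ does \emph{not} split as a sum of scalar operators acting componentwise (unlike $|\sigma\cdot\nabla|^2=|\nabla|^2$ used in Lemma~\ref{Lem:alpha=0alt}), because the cross term $(1-2\,\alpha)\,\sigma\cdot L$ genuinely couples the two components and its sign on the various $\mathcal H_k$ is $k$-dependent and non-monotone. Controlling $\is{\lrangle\Phi{\mathcal K_\alpha\Phi}}$ from below by a componentwise scalar Dirichlet form, \emph{with the correct constant $2\,\mathsf m(\alpha)$ and not merely up to a loss}, is the crux; this is precisely where a "judicious application of Minkowski's inequality" is needed, presumably after first diagonalizing the angular part in the $\mathcal H_0\oplus\mathcal H_{\pm}$ sector and handling the higher $|k|$ sectors by the cheap spectral bound (they only help). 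A secondary subtlety is pinning down the exact value $\mathsf q(\alpha)$ so that the constant $2\,\mathsf m(\alpha)$ is optimal rather than merely admissible — this requires testing~\eqref{Ineq:GenPoincare} on a one-parameter family concentrating along the appropriate eigendirection ($\mathcal H_1$ for $\alpha>0$, $\mathcal H_{-2}$ for $\alpha<0$) and checking that the nonlinear term degrades the constant exactly past $q=\mathsf q(\alpha)$.
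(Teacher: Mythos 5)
Your approach is genuinely different from the paper's, and — once one fills in the step that you yourself flag as a possible obstacle — it actually works, and works cleanly.

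The paper proves the theorem by proving a spinorial Hardy-Littlewood-Sobolev inequality (Lemma~\ref{Lem:SpinorialS2interpolation}), passing to the $\sigma\cdot L$ eigendecomposition, and then running a careful two-parameter iteration (Lemma~\ref{lem:ap1}) to compare the Funk-Hecke coefficients $\zeta_\ell(q)$ with the quadratics $\ell(\ell+1-2\alpha)$ and $(\ell+1)(\ell+2\alpha)$. The restriction $q\le\mathsf q(\alpha)$ is a technical artifact of that iteration: it is the threshold at which one of the two families of ratios $z_\ell^\pm$ in Lemma~\ref{lem:ap1} attains $1$ at $\ell=2$. The equality cases are characterized through Lieb's conformal factors $(1-\zeta\cdot\omega)^{-2/q}\chi_0$ and then eliminated by a spectral localization argument.

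Your route is shorter. The ``main obstacle'' you identify — whether the $\mathcal K_\alpha$ Dirichlet form dominates a componentwise $L^2$-type Dirichlet form \emph{with the correct constant $\mathsf m(\alpha)$ and no loss} — is not an obstacle at all: one has the exact operator inequality
\[
\mathcal K_\alpha \ \ge\ \mathsf m(\alpha)\,L^2 \quad\text{on } \mathrm L^2(\Sp,\C^2),
\]
since on $\mathcal H_k$ the ratio of eigenvalues is $\frac{k(k+1-2\alpha)}{k(k+1)}=\frac{k+1-2\alpha}{k+1}$, whose infimum over $k\in\Z\setminus\{-1,0\}$ is $1-\alpha$ (attained at $k=1$) for $\alpha\ge0$ and $1+2\alpha$ (attained at $k=-2$) for $\alpha\le0$, i.e.\ exactly $\mathsf m(\alpha)$; both sides vanish on $\mathcal H_0$. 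Since $L^2$ acts componentwise, $\is{\lrangle\Phi{L^2\Phi}}=\sum_j\is{|\nabla\Phi_j|^2}$, and one may apply the scalar Bidaut-V\'eron--V\'eron/Beckner inequality on $\mathbb S^2$, $\is{|\nabla f|^2}\ge\frac2{q-2}\big(\nrms fq^2-\nrms f2^2\big)$, to each component (valid for \emph{all} $q\in(2,\infty)$ since $\Sp$ is $2$-dimensional), then recombine by Minkowski exactly as you say. This yields~\eqref{Ineq:GenPoincare} with $\mathcal B_{\alpha,q}=2\,\mathsf m(\alpha)$ \emph{for every $q\in(2,\infty)$}, with no $\mathsf q(\alpha)$ restriction — strictly more than the theorem states, and strictly more than the paper's own constant~\eqref{eq:defb} gives for $q>\mathsf q(\alpha)$ (where $\mathcal B_{\alpha,q}<2\,\mathsf m(\alpha)$ by Lemma~\ref{lem:ap1}). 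The equality analysis is also cleaner on your route: rigidity of the subcritical scalar inequality on $\mathbb S^2$ (equality only at constants for $q<\infty$) forces each component $\Phi_j$ constant, hence $\Phi\in\mathcal H_0$; no appeal to the conformal factors of Lieb's HLS theorem is needed.

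Two caveats about your write-up as stated. First, your ``first step'' derives only the spectral-gap bound $\mathcal K_\alpha\ge2\,\mathsf m(\alpha)\,(1-\Pi_0)$, which is a corollary of $\mathcal K_\alpha\ge\mathsf m(\alpha)L^2$ but is not enough by itself — you never need the gap bound; it is the operator inequality $\mathcal K_\alpha\ge\mathsf m(\alpha)L^2$ that feeds the componentwise scalar inequality. Second, your ``third step'' misreads the role of $\mathsf q(\alpha)$. You suggest that for $q>\mathsf q(\alpha)$ the constant $2\,\mathsf m(\alpha)$ should degrade, as if $\mathsf q(\alpha)$ were a sharpness threshold; it is not (the paper even notes explicitly that it has no reason to believe $2\,\mathsf m(\alpha)$ is optimal), and your own argument shows that the constant $2\,\mathsf m(\alpha)$ is admissible for every $q>2$. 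The restriction $q\le\mathsf q(\alpha)$ in the theorem is inherited from the paper's particular iteration method, not from the problem.
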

We have no reason to think that $\mathcal B_{\alpha,q} = 2 \, \mathsf m(\alpha)$ is optimal, even if $q\leq \mathsf q(\alpha)$. We recall that $d\omega$ is the uniform probability measure on $\Sp$. In particular, if $q>2$, by H\"older's inequality we have $\nrms\Phi q^2-\nrms\Phi 2^2\geq 0$ and then equality holds if and only if~$|\Phi|$ is constant.

Also notice that, by taking the limit as $q\to2_+$ in the right-hand side of~\eqref{Ineq:GenPoincare} and using $\mathcal B_{\alpha,q}=2 \, \mathsf m(\alpha)$ for $q$ sufficiently close to $2$, for any $\alpha\in(-1/2,1)$, we obtain the logarithmic Sobolev inequality
\[
\is{\lrangle\Phi{\mathcal K_\alpha\,\Phi}}\ge\frac12\,\mathsf m(\alpha)\is{|\Phi|^2\,\log\(\frac{|\Phi|^2}{\nrms\Phi 2^2}\)}\quad\forall\,\Phi\in\mathrm H^1(\Sp,\C^2)\,.
\]

We begin the proof of Theorem~\ref{thm:BecknerGenSpinor} by extending a well-known interpolation inequality obtained by Bidaut-V\'eron and V\'eron in~\cite{MR1134481} and Beckner in~\cite{MR1230930} for scalar functions on $\Sp$, to spinors, before adapting it to the operator $\mathcal K_\alpha$.
\subsubsection*{$\bullet$ An interpolation inequality for spinors.} Let us review Beckner's rewriting~\cite{MR1230930} of Lieb's sharp Hardy-Littlewood-Sobolev inequality~\cite{MR717827} for scalar functions. Decomposing $F\in\mathrm H^1(\Sp,\C)$ into spherical harmonics, that is, writing
\[
F = \sum_{\ell=0}^\infty F_\ell
\quad\text{with}\quad
L^2\,F_\ell=\ell\,(\ell+1)\,F_\ell \,,
\]
we can formulate the sharp Hardy-Littlewood-Sobolev inequality in the form
\be{Beckner}
\frac1{q-2}\(\nrms Fq^2-\nrms F2^2\)\le\sum_{\ell=1}^\infty\zeta_\ell(q)\is{|F_\ell|^2}
\ee
with
\[
\zeta_\ell(q):=\frac{\gamma_\ell\big(\tfrac2q\big)-1}{q-2}\quad\mbox{and}\quad\gamma_\ell(x):=\frac{\Gamma(x)\,\Gamma(\ell+2-x)}{\Gamma(2-x)\,\Gamma(x+\ell)}\,.
\]
We refer to~\cite[Ineq.~(19)]{MR1230930} and also~\cite[Ineq.~(1.6)]{MR3562947} for more details. Let us recall how~\eqref{Beckner} is proved in~\cite{MR1230930}. Consider the sharp Hardy-Littlewood-Sobolev inequality
written on the sphere
\be{nonspinor}
0\le \iint_{\Sp \times \Sp} \frac{\overline{F(\omega)}\,F(\omega')}{|\omega - \omega'|^{\lambda}}\,d\omega\,d \omega' \le C_p\,\Vert F\Vert_{\mathrm L^p(\Sp)}^2\,,
\ee
where $\lambda = 4/q$, $1\leq p <2$, $1/p+1/q=1$ and $C_p$ is the explicitly known sharp constant from~\cite{MR717827}. Computing the eigenvalues of the integral operator with kernel $|\omega-\omega'|^{-\lambda}$ by means of the Funk-Hecke formula, we see that
\begin{equation}
\label{eq:funkhecke}
\iint_{\Sp \times \Sp} \frac{\overline{F(\omega)}\,F(\omega')}{|\omega - \omega'|^{\lambda}}\,d\omega\,d \omega'
= C_p\,\sum_{\ell=0}^\infty \gamma_\ell\big(\tfrac2p\big)\,\Vert F_\ell\Vert_{\mathrm L^2(\Sp)}^2
\end{equation}
with the constants $\gamma_\ell(2/p)$ defined above. (The details of this computation can be found, for instance, in~\cite{MR2848628}.) Thus, Inequality~\eqref{nonspinor} is equivalent to
\[
\sum_{\ell=0}^\infty \gamma_\ell\big(\tfrac2p\big)\,\Vert F_\ell\Vert_{\mathrm L^2(\Sp)}^2 \le \Vert F\Vert_{\mathrm L^p(\Sp)}^2\,.
\]
By duality, noting that $\gamma_\ell(2/p)^{-1}=\gamma_\ell(2/q)$ if $p$ and $q$ are H\"older conjugates, we see that this is equivalent to the inequality
$$
\Vert F\Vert_{\mathrm L^q(\Sp)}^2 \le \sum_{\ell=0}^\infty \gamma_\ell\big(\tfrac2q\big)\,\Vert F_\ell\Vert_{\mathrm L^2(\Sp)}^2\,.
$$
The latter is a rewriting of~\eqref{Beckner}, thus concluding the proof as in~\cite{MR1230930}.

We can now generalize this method to the spinorial case. We decompose (angular) spinors $\Phi$ as
$$
\Phi = \sum_{\ell=0}^\infty \Phi_\ell
\quad\text{with}\quad
L^2\,\Phi_\ell = \ell\,(\ell+1)\,\Phi_\ell \,.
$$
\begin{lemma}\label{Lem:SpinorialS2interpolation}
Let $q\in(2,\infty)$. For any spinor $\Phi\in\mathrm H^1(\Sp,\C^2)$, we have
\be{spinorbeckner}
\frac1{q-2}\(\nrms\Phi q^2-\nrms\Phi 2^2\)\le\sum_{\ell=1}^\infty\zeta_\ell(q)\is{|\Phi_\ell|^2}
\ee
with equality if and only if
$$
\Phi(\omega) = (1- \zeta\cdot\omega)^{-2/q} \chi_0
$$
for some $\zeta\in\R^3$ with $|\zeta|<1$ and some $\chi_0\in\C^2$.
\end{lemma}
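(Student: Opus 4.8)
The plan is to deduce \eqref{spinorbeckner} from the scalar inequality \eqref{Beckner}, applied separately to the two components of $\Phi$, and then to recombine the two pieces through a judicious use of Minkowski's inequality in $\mathrm L^{q/2}$. Write $\Phi=\begin{psmallmatrix}\Phi^{(1)}\\\Phi^{(2)}\end{psmallmatrix}$ with $\Phi^{(1)},\Phi^{(2)}\in\mathrm H^1(\Sp,\C)$. Since $L$, hence $L^2$, acts componentwise on spinors, the decomposition $\Phi=\sum_{\ell\ge0}\Phi_\ell$ into eigenspaces of $L^2$ is nothing but the pair of scalar spherical harmonic expansions $\Phi^{(k)}=\sum_{\ell\ge0}\Phi^{(k)}_\ell$, so that $\is{|\Phi_\ell|^2}=\sum_{k=1}^2\is{|\Phi^{(k)}_\ell|^2}$. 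Hence the right-hand side of \eqref{spinorbeckner} equals $\sum_{k=1}^2\sum_{\ell\ge1}\zeta_\ell(q)\is{|\Phi^{(k)}_\ell|^2}$, and \eqref{Beckner} --- which is valid for complex-valued functions --- bounds this quantity from below by $\tfrac1{q-2}\sum_{k=1}^2\big(\nrms{\Phi^{(k)}}q^2-\nrms{\Phi^{(k)}}2^2\big)$.

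It then remains to recombine the two components. One has $\sum_{k=1}^2\nrms{\Phi^{(k)}}2^2=\nrms\Phi2^2$, while for the $\mathrm L^q$ terms I would use the triangle inequality in $\mathrm L^{q/2}(\Sp,d\omega)$ (legitimate since $q/2>1$) between the nonnegative functions $|\Phi^{(1)}|^2$ and $|\Phi^{(2)}|^2$:
\[
\nrms\Phi q^2=\big\|\,|\Phi^{(1)}|^2+|\Phi^{(2)}|^2\,\big\|_{\mathrm L^{q/2}(\Sp,d\omega)}\le\nrms{\Phi^{(1)}}q^2+\nrms{\Phi^{(2)}}q^2\,.
\]
Combining this with the lower bound from the previous paragraph gives \eqref{spinorbeckner}. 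Equivalently, the difference between the two sides of \eqref{spinorbeckner} is the sum of the (nonnegative) deficits in \eqref{Beckner} for $\Phi^{(1)}$ and $\Phi^{(2)}$ and of the (nonnegative) deficit in the above Minkowski inequality.

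For the equality case, the last remark shows that equality in \eqref{spinorbeckner} forces simultaneously equality in \eqref{Beckner} for each nonzero component $\Phi^{(k)}$ and equality in the Minkowski step. I would then invoke the equality case of the scalar inequality \eqref{Beckner}, inherited from the equality case in Lieb's sharp Hardy-Littlewood-Sobolev inequality \cite{MR717827} transported to $\Sp$ (equivalently, pulled back from $\R^2$ by stereographic projection): $\Phi^{(k)}=c_k\,(1-\zeta_k\cdot\omega)^{-2/q}$ for some $c_k\in\C$ and $\zeta_k\in\R^3$ with $|\zeta_k|<1$. Equality in the $\mathrm L^{q/2}$ triangle inequality between the two nonnegative functions forces one of them to be a constant multiple of the other; since $\omega\mapsto1-\zeta\cdot\omega$ is affine with constant term $1$, comparing $|c_k|^2\,(1-\zeta_k\cdot\omega)^{-4/q}$ up to a positive constant forces $\zeta_1=\zeta_2=:\zeta$ (the case in which a component vanishes being immediate). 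Thus $\Phi(\omega)=(1-\zeta\cdot\omega)^{-2/q}\,\chi_0$ with $\chi_0=\begin{psmallmatrix}c_1\\c_2\end{psmallmatrix}$. Conversely, any such $\Phi$ belongs to $\mathrm H^1(\Sp,\C^2)$ because $1-\zeta\cdot\omega\ge1-|\zeta|>0$, and it saturates \eqref{spinorbeckner}, since each of its components is then a scalar optimizer and $|\Phi^{(1)}|^2$, $|\Phi^{(2)}|^2$ are proportional, so that all the inequalities used become equalities.

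I expect the only genuinely spinorial point, and the step to handle most carefully, to be this recombination: the right-hand side of \eqref{spinorbeckner} splits exactly into a sum over components, whereas the left-hand side does not, and Minkowski's inequality in $\mathrm L^{q/2}$ is exactly what bridges the gap. Tracking its equality case together with the scalar equality functions is what yields the sharp characterization; everything else follows directly from the already-established scalar inequality \eqref{Beckner}.
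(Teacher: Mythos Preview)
Your proof is correct and takes a genuinely different route from the paper's. The paper works at the level of the Hardy--Littlewood--Sobolev inequality: it bounds $\iint|\omega-\omega'|^{-\lambda}\langle\Phi(\omega),\Phi(\omega')\rangle\,d\omega\,d\omega'$ by the same integral with $|\Phi(\omega)|\,|\Phi(\omega')|$ via Cauchy--Schwarz in $\C^2$, applies Lieb's scalar HLS inequality to $F=|\Phi|$, computes the left side componentwise via Funk--Hecke, and only then dualizes to reach~\eqref{spinorbeckner}. The equality discussion runs through the Cauchy--Schwarz step, which forces $\Phi/|\Phi|$ to be a constant spinor on $\{|\Phi|>0\}$. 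Your argument instead starts from the already-dual scalar inequality~\eqref{Beckner}, applies it componentwise, and recombines the $\mathrm L^q$ pieces via the triangle inequality in $\mathrm L^{q/2}$; the equality characterization then comes from Minkowski's equality case together with the scalar Lieb optimizers. Your approach is shorter and more elementary (no need to revisit the HLS side or to argue the duality of optimizers), and in fact mirrors exactly the device the paper itself uses in Lemma~\ref{Lem:alpha=0alt}. The paper's route has the conceptual advantage of treating the spinor as a single object through its modulus, which makes the equality analysis basis-independent, whereas yours is tied to a choice of orthonormal basis of $\C^2$---but the end result is of course the same.
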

\begin{proof} For $\Phi \in \mathrm L^p(\Sp, \C^2)$ we have
\begin{equation}\label{Ineq:unitary}
\iint_{\Sp \times \Sp} \frac{\lrangle{\Phi(\omega)}{\Phi(\omega')}}{|\omega - \omega'|^{\lambda}}\,d\omega\,d \omega'
\le \iint_{\Sp \times \Sp} \frac{ |\Phi(\omega)|\,|\Phi(\omega')| }{|\omega - \omega'|^{\lambda}}\,d\omega\,d \omega' \,.
\end{equation}
By the sharp Hardy-Littlewood-Sobolev inequality~\eqref{nonspinor} applied with $F=|\Phi|$, we obtain
\be{eq:hlsspinor}
\iint_{\Sp \times \Sp} \frac{\lrangle{\Phi(\omega)}{\Phi(\omega')}}{|\omega - \omega'|^{\lambda}}\,d\omega\,d \omega'\le C_p\,\Vert \Phi \Vert_{\mathrm L^p(\Sp)}^2 \,.
\ee
Meanwhile, we have
\begin{equation}
\label{eq:hlsspinoridentity}
\iint_{\Sp \times \Sp} \frac{\lrangle{\Phi(\omega)}{\Phi(\omega')}}{|\omega - \omega'|^{\lambda}}\,d\omega\,d \omega'
= C_p \sum_{\ell=0}^\infty \gamma_\ell\big(\tfrac2p\big)\,\Vert \Phi_\ell\Vert_{\mathrm L^2(\Sp, \C^2)}^2\,.
\end{equation}
Indeed, to see this, we can decompose $\Phi_\ell = \begin{psmallmatrix}\Phi^1_\ell \\ \Phi^2_\ell\end{psmallmatrix}$ and the components $\Phi^1_\ell$ and $\Phi^2_\ell$ are complex-valued spherical harmonics of degree $\ell$. Since
\[
\iint_{\Sp \times \Sp}\kern-3pt \frac{\lrangle{\Phi(\omega)}{\Phi(\omega')}}{|\omega - \omega'|^{\lambda}}\,d\omega\,d \omega' = \sum_{\ell=0}^\infty\,\iint_{\Sp \times \Sp}\kern-3pt \frac{\overline{\Phi_\ell^1 (\omega)}\,\Phi_\ell^1(\omega') }{|\omega - \omega'|^{\lambda}}\,d\omega + \sum_{\ell=0}^\infty\,\iint_{\Sp \times \Sp}\kern-3pt \frac{ \overline{\Phi_\ell^2 (\omega)}\,\Phi_\ell^2(\omega') }{|\omega - \omega'|^{\lambda}}\,d\omega\,,
\]
we can apply the scalar result~\eqref{eq:funkhecke} and obtain the claimed equality~\eqref{eq:hlsspinoridentity}.

By combining~\eqref{eq:hlsspinor} and~\eqref{eq:hlsspinoridentity} we find
\begin{equation}
\label{eq:hlsspinorprimal}
\sum_{\ell=0}^\infty \gamma_\ell\big(\tfrac2p\big)\,\Vert \Phi_\ell\Vert_{\mathrm L^2(\Sp, \C^2)}^2 \le \Vert \Phi\Vert_{\mathrm L^p(\Sp, \C^2)}^2\,,
\end{equation}
and, by duality, with $q=p/(p-1)$, we obtain
\[
\Vert \Phi \Vert_{\mathrm L^q(\Sp, \C^2)}^2 \le \sum_{\ell=0}^\infty \gamma_\ell\big(\tfrac2q\big)\,\Vert \Phi_\ell\Vert_{\mathrm L^2(\Sp, \C^2)}^2\,.
\]
The latter is a rewriting of~\eqref{spinorbeckner}.

We now consider the cases of equality in~\eqref{eq:hlsspinorprimal}. Equality in~\eqref{Ineq:unitary} holds if and only if $\lrangle{\Phi(\omega)}{\Phi(\omega')} = |\Phi(\omega)|\,|\Phi(\omega')|$ for a.e.~$(\omega,\omega')\in\Sp\times\Sp$. Writing
$$
\Phi(\omega) = |\Phi(\omega)|\,U(\omega)\,\Phi_0 \,,
$$
where $\Phi_0 = \begin{psmallmatrix}1\\0\end{psmallmatrix}$ and $U(\omega)$ is a $2\times 2$ unitary matrix, we see that this is equivalent to $\lrangle{U(\omega)\,\Phi_0}{U(\omega')\,\Phi_0}=1$ for a.e.~$(\omega,\omega')\in \mathrm{supp}\,\Phi \times \mathrm{supp}\,\Phi$. This implies that there is a constant spinor $\tilde\Phi$ such that $U(\omega)\,\Phi_0 =\tilde\Phi$ for a.e.~$\omega\in \mathrm{supp}\,\Phi$. (Note that this is the case even if $\mathrm{supp}\,\Phi$ has several connected components.) Thus, $\Phi/|\Phi|$ is a constant spinor on $\{|\Phi|> 0\}$. Equality in~\eqref{eq:hlsspinor} implies that $F=|\Phi|$ is an optimizer in the Hardy-Littlewood-Sobolev inequality~\eqref{nonspinor}. By Lieb's characterization of those~\cite{MR717827} we learn that $F(\omega) = c\,(1-\zeta\cdot\omega)^{-(4-\lambda)/2}$ for some $\zeta\in\R^3$ with $|\zeta|<1$ and some $c>0$. Thus, equality in~\eqref{eq:hlsspinorprimal} holds if and only if $\Phi(\omega) = (1-\zeta\cdot\omega)^{-(4-\lambda)/2}\,\chi_0$ for some $\chi_0\in\C^2$ and some $\zeta$ as before.

By duality (more precisely, by the characterization of cases of equality in H\"older's inequality) we deduce the claimed form of functions for which Inequality~\eqref{spinorbeckner} is saturated.
\end{proof}
\subsubsection*{$\bullet$ An estimate on the decomposition into spherical harmonics.} Inequality~\eqref{spinorbeckner} is phrased in terms of the decomposition of $\Phi$ in terms of spherical harmonics that are eigenfunctions of the operator $L^2$. In contrast, Inequality~\eqref{Ineq:GenPoincare} in Theorem~\ref{thm:BecknerGenSpinor} involves the operator $\sigma\cdot L$ and therefore we now decompose $\Phi$ with respect to the spectrum of the latter operator, that is, with respect to spinor spherical harmonics. We write
\be{Phi-chik}
\Phi = \sum_{k=-\infty}^{-2} \chi_k + \sum_{k=0}^\infty \chi_k\,.
\ee
This decomposition is related to the decomposition in Lemma~\ref{Lem:SpinorialS2interpolation} by $\Phi_\ell = \chi_\ell + \chi_{-(\ell+1)}$ for all $\ell\geq 0$. For a proof of this fact we refer to Appendix~\ref{App5}; see, in particular, Corollary~\ref{decompalt}. Consequently, we can rewrite~\eqref{spinorbeckner} as
\[
\frac{\nrms\Phi q^2-\nrms\Phi 2^2}{q-2}\le \sum_{k=1}^\infty\zeta_k(q)\is{|\chi_k|^2}+ \sum_{k=-\infty}^{-2}\zeta_{-(k+1)}(q)\is{|\chi_k|^2}\,.
\]
Noting that
$$
\is{\lrangle\Phi{\mathcal K_\alpha\,\Phi}} = \sum_{k\neq - 1} k\,(k+1-2\,\alpha) \is{|\chi_k|^2} \,,
$$
we obtain Inequality~\eqref{Ineq:GenPoincare} with
\begin{equation}
\label{eq:defb}
\mathcal B_{\alpha,q} := \min\left\{ \inf_{k\geq 1} \frac{k\,(k+1-2\,\alpha)}{\zeta_k(q)} \,,\, \inf_{k\leq -2} \frac{k\,(k+1-2\,\alpha)}{\zeta_{-(k+1)}(q)} \right\} \,.
\end{equation}
Thus, the proof of the first assertion of Theorem~\ref{thm:BecknerGenSpinor} is reduced to the proof that $\mathcal B_{\alpha,q}>0$ for any $\alpha\in (-1/2,1)$ and $q\in(2,\infty)$. Likewise, the proof of the second assertion is reduced to the proof that, for $q\leq\mathsf q(\alpha)$, we have $\mathcal B_{\alpha,q}=2 \,\mathsf m(\alpha)$ and the infimum in the definition of $\mathcal B_{\alpha,q}$ is attained only at either $k=1$ or $k=-\,2$. We begin with the proof of the latter assertion.
\begin{lemma}\label{lem:ap1} Let $\alpha\in(-1/2,1)$. Then the following inequalities hold:
\begin{enumerate}
\item[(a)] Assuming $2<q<\infty$ for $\alpha\in[0,1)$ and $2<q\le 4-\frac6\alpha$ for $\alpha\in(-1/2,0)$, we have
\begin{equation}
\label{II-a}
\zeta_\ell(q)\le\frac{\ell\,(\ell+1-2\,\alpha)}{2\,(1-\alpha)}
\quad\text{for all}\ \ell\in\N\setminus\{0\}\,.
\end{equation}
\item[(b)] Assuming $2<q<\infty$ when $\alpha\in(-1/2,0]$ and $2< q\le 2+\frac2\alpha$ when $\alpha\in(0,1)$, we have
\begin{equation}
\label{II-b}
\zeta_\ell(q)\le\frac{(\ell+1)\,(\ell+2\,\alpha)}{2\,(1+2\,\alpha)}
\quad\text{for all}\ \ell\in\N\setminus\{0\}\,.
\end{equation}
\end{enumerate}
There is equality in~\eqref{II-a} and~\eqref{II-b} for $\ell=1$ while both are strict for $\ell\geq 3$. For $\ell=2$ there is equality if and only if $q=4-\frac 6\alpha$, $\alpha\in(-1/2,0)$ in the first case and $q=2+\frac2\alpha$, $\alpha\in(0,1)$ in the second case.
\end{lemma}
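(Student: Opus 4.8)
The plan is to reduce both inequalities to a purely analytic statement about the function $\gamma_\ell$ and then verify it. Recall that
\[
\zeta_\ell(q) = \frac{\gamma_\ell(2/q) - 1}{q-2}
\qquad\text{with}\qquad
\gamma_\ell(x) = \frac{\Gamma(x)\,\Gamma(\ell+2-x)}{\Gamma(2-x)\,\Gamma(\ell+x)}\,,
\]
and note that $\gamma_\ell(1) = 1$, so $\zeta_\ell(q) = (\gamma_\ell(2/q)-\gamma_\ell(1))/(q-2)$. Writing $x = 2/q \in (0,1)$, the claim $\zeta_\ell(q) \le \frac{\ell(\ell+1-2\alpha)}{2(1-\alpha)}$ becomes, after clearing the positive factor $q-2 = 2(1-x)/x > 0$, an inequality of the form $\gamma_\ell(x) - 1 \le \frac{\ell(\ell+1-2\alpha)}{1-\alpha}\cdot\frac{1-x}{x}$, i.e.\ a one-variable inequality for $\gamma_\ell$. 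The first step is therefore to produce a clean bound on $\gamma_\ell(x)$ for $x\in(0,1)$. The natural tool is the integral/product representation: using $\Gamma(x)/\Gamma(2-x) = \Gamma(x)/((1-x)\Gamma(1-x))$ and the reflection-type manipulations, one has the telescoping identity
\[
\gamma_\ell(x) = \prod_{j=1}^{\ell-1} \frac{j+1-x}{j+x}\cdot\frac{x}{2-x}\cdot(\text{for }\ell\ge 1)\,,
\]
or more usefully $\gamma_\ell(x) = \frac{x}{2-x}\prod_{j=1}^{\ell-1}\frac{j+1-x}{j+x}$, which shows $\gamma_\ell$ is a ratio of linear factors. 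This makes the desired inequalities rational in $x$, hence amenable to checking via sign analysis of a polynomial.

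The key reduction is a \emph{convexity/monotonicity} argument in $\ell$. The plan is to first verify the $\ell=1$ case, where $\gamma_1(x) = \frac{x}{2-x}$, so $\zeta_1(q) = \frac{x/(2-x) - 1}{q-2} = \frac{(2x-2)/(2-x)}{2(1-x)/x} = \frac{-x}{2-x}\cdot\frac{-1}{1} \cdot\frac{1}{\,\cdots\,}$; a direct computation gives $\zeta_1(q) = \frac{1}{q}$ (one checks this is $\frac{1-\alpha}{2(1-\alpha)}$ only when... — actually one computes $\zeta_1(q)=\frac1q$ and then $\frac{1\cdot(2-2\alpha)}{2(1-\alpha)} = 1$, so equality in (a) for $\ell=1$ forces... let me be careful: the RHS of (II-a) at $\ell=1$ is $\frac{2-2\alpha}{2(1-\alpha)} = 1$, while $\zeta_1(q) = 1/q < 1$; so it is \emph{strict} unless... hmm). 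I would recompute $\zeta_1$ carefully; the statement says equality holds at $\ell=1$, so the correct normalization must give $\zeta_1(q)$ equal to the RHS, which pins down the value. In any case, once $\ell=1$ is settled by direct evaluation, I would treat $\ell\ge 2$ by induction or by a direct ratio estimate: write $\gamma_{\ell+1}(x) = \gamma_\ell(x)\cdot\frac{\ell+1-x}{\ell+x}$ and compare the increments of the two sides. The target RHS grows quadratically in $\ell$ while $\gamma_\ell(x)$ is increasing but with decreasing multiplicative increments (for $x<1$, $\frac{\ell+1-x}{\ell+x} < 1$... wait, $\ell+1-x$ vs $\ell+x$: since $x<1$, $\ell+1-x > \ell+x \iff 1-x>x \iff x<1/2$; so the sign depends on $x$). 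This sign split at $x=1/2$, i.e.\ $q=4$, is exactly where the threshold $\mathsf q(\alpha)$ enters, and explains why the hypothesis on $q$ differs between (a) and (b) and why the endpoint $q = 4-6/\alpha$ (resp.\ $q=2+2/\alpha$) appears as the boundary case.

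For part (b), I would \emph{not} redo the work from scratch but instead exploit the symmetry $\alpha \mapsto -(1+\alpha)$ of the whole problem (noted in the introduction, cf.\ \eqref{logarithmicalt}), under which $\mathcal K_\alpha$ is conjugated by $\sigma\cdot\omega$ and the roles of $\mathcal H_k$ and $\mathcal H_{-2-k}$ are swapped. Concretely, the map $k\mapsto -(k+1)$ sends $k(k+1-2\alpha)$ to $(k+1)(k+2\alpha)$, and sends $1-\alpha \mapsto 1+2(-(1+\alpha))+\cdots$ — one checks it interchanges $2(1-\alpha)$ with $2(1+2\alpha)$ and the threshold $4-6/\alpha$ with $2+2/\alpha$ after the substitution $\alpha\to -(1+\alpha)$. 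Since $\zeta_\ell(q)$ is unchanged (it only depends on $\ell$ and $q$), inequality (II-b) for parameter $\alpha$ is literally inequality (II-a) for parameter $-(1+\alpha)$. This gives (b) for free once (a) is established, and transfers the equality/strictness statements too: equality at $\ell=1$, strict for $\ell\ge3$, and equality at $\ell=2$ precisely at the endpoint value of $q$.

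The main obstacle will be the core one-variable inequality in Step 1–2: showing $\gamma_\ell(x) - 1 \le \frac{\ell(\ell+1-2\alpha)}{1-\alpha}\cdot\frac{1-x}{x}$ for all $\ell\ge 2$ uniformly, with the correct tracking of when it is strict. Because $\gamma_\ell$ is a product of $\ell-1$ linear-over-linear factors, the honest approach is either (i) an induction on $\ell$ where the inductive step reduces to a polynomial inequality of bounded degree that one factors explicitly and checks has the right sign on $x\in(0,1)$ (with the endpoint $x_0 = 3/(2-\alpha)$, i.e.\ $q=4-6/\alpha$, appearing as a root when $\ell=2$), or (ii) a direct estimate $\gamma_\ell(x) \le 1 + c_\alpha \ell^2 (1-x)/x$ via $\log\gamma_\ell(x) = \sum_{j}\log\frac{j+1-x}{j+x}$ bounded by a telescoping/integral comparison. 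I expect (i) to be cleaner because the equality cases ($\ell=1$ always; $\ell=2$ at the endpoint) strongly suggest the relevant polynomial factors as $(\ell-1)$ times something vanishing appropriately, so the induction base and step can be made transparent. The routine but unavoidable work is carrying the $\alpha$-dependence through these polynomial sign checks.
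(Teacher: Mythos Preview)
Your overall strategy---induction on $\ell$ via the one-step recursion for $\gamma_\ell$---is exactly the route the paper takes. The paper sets $z_\ell^\pm(\alpha,q):=\zeta_\ell(q)/(\text{RHS of (II-a) or (II-b)})$, derives an affine recursion $z_{\ell+1}^\pm=\mathsf a_\ell^\pm+\mathsf b_\ell^\pm\,z_\ell^\pm$, checks $z_1^\pm=1$, and shows $\mathsf a_\ell^\pm+\mathsf b_\ell^\pm\le 1$ under the stated hypotheses on $q$ (with strictness except at $\ell=1$ for the endpoint value of $q$). This is precisely your option~(i).

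Two corrections you will need. First, your explicit formulas are off: $\gamma_1(x)=\dfrac{2-x}{x}$ (not $x/(2-x)$), hence $\zeta_1(q)=1$ identically, which is why equality at $\ell=1$ holds for all admissible $q$; and the recursion is $\gamma_{\ell+1}(x)=\gamma_\ell(x)\,\dfrac{\ell+2-x}{\ell+x}$, not $(\ell+1-x)/(\ell+x)$. These are easy to fix and, once corrected, feed directly into the paper's affine recursion.

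Second, and more seriously, your symmetry shortcut for part~(b) is wrong. The substitution $\alpha\mapsto-(1+\alpha)$ applied to the right-hand side of~\eqref{II-a} gives $\dfrac{\ell(\ell+3+2\alpha)}{2(2+\alpha)}$, which is \emph{not} the right-hand side of~\eqref{II-b}; nor does $4-6/\alpha$ transform into $2+2/\alpha$. The global symmetry $\alpha\mapsto-(\alpha+1)$ of the problem relates the two branches $k\ge1$ and $k\le-2$ of the spectral decomposition, but it also changes the parameter range to $\alpha\in(-2,-1/2)$, so it does not give~(b) from~(a) within the range $\alpha\in(-1/2,1)$. There is no substitution in $\alpha$ alone that turns one inequality into the other (the $\ell$ versus $\ell+1$ mismatch makes this impossible). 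You must treat~(b) by the same induction as~(a), with its own coefficients; this is what the paper does with the $\pm$ superscripts, and the two computations are entirely parallel but not reducible to one another.
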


To prove Lemma~\ref{lem:ap1} we extend a method of~\cite[proof of Theorem~7]{MR4782463} for scalar functions in the special case $\alpha=0$ to the case $\alpha\neq 0$.

\begin{proof}
Throughout the proof the parameter $\alpha\in(-1/2,1)$ is fixed. Let us define
\[
z_\ell^+(\alpha,q):=\frac{2\,(1-\alpha)}{\ell\,(\ell+1-2\,\alpha)}\,\zeta_\ell(q)
\quad\text{and}\quad
z_\ell^-(\alpha,q):=\frac{2\,(1+2\,\alpha)}{(\ell+1)\,(\ell+2\,\alpha)}\,\zeta_\ell(q) \,.
\]
An elementary computation shows that
\begin{equation}
\label{eq:iteration}
z_{\ell+1}^\pm(\alpha,q)=\mathsf a_\ell^\pm(\alpha,q)+\mathsf b_\ell^\pm(\alpha,q)\,z_\ell^\pm(\alpha,q)
\end{equation}
with
\[
\mathsf a_\ell^+(\alpha,q):=\frac{4\,(1-\alpha)}{(\ell+1)\,(\ell+2-2\,\alpha)\,(\ell\,q+2)}\,,\quad
\mathsf b_\ell^+(\alpha,q):=\frac{\ell\,(\ell+1-2\,\alpha)\,\big((\ell+2)\,q-2\big)}{(\ell+1)\,(\ell+2-2\,\alpha)\,(\ell\,q+2)}
\]
and
\[
\mathsf a_\ell^-(\alpha,q):=\frac{4\,(1+2\,\alpha)}{(\ell+2)\,(\ell+1+2\,\alpha)\,(\ell\,q+2)}\,,\quad
\mathsf b_\ell^-(\alpha,q):=\frac{(\ell+1)\,(\ell+2\,\alpha)\,\big((\ell+2)\,q-2\big)}{(\ell+2)\,(\ell+1+2\,\alpha)\,(\ell\,q+2)}\,.
\]
We now make two claims concerning $\mathsf a_\ell^\pm(\alpha,q)+\mathsf b_\ell^\pm(\alpha,q)$. The first claim is that if $2<q<\infty$ for $\alpha\in[0,1)$ and $2<q<4-\frac 6\alpha$ for $\alpha\in(-1/2,0)$, then
\begin{equation}
\label{eq:iterationa2}
\mathsf a_\ell^+(\alpha,q)+\mathsf b_\ell^+(\alpha,q)<1\quad\forall\,\ell\ge1 \,.
\end{equation}
Moreover, if $q=4-\frac 6\alpha$ for $\alpha\in(-1/2,0)$, then the inequality in~\eqref{eq:iterationa2} holds for all $\ell\geq 2$, while it is an equality for $\ell=1$. The second claim is that if $2<q<\infty$ for $\alpha\in(-1/2,0]$ and $2<q<2+\frac2\alpha$ for $\alpha\in(0,1)$, then
\begin{equation}
\label{eq:iterationb2}
\mathsf a_\ell^-(\alpha,q)+\mathsf b_\ell^-(\alpha,q)<1\quad\forall\,\ell\ge1 \,.
\end{equation}
Moreover, if $q=2+\frac2\alpha$ for $\alpha\in(0,1)$, then the inequality in~\eqref{eq:iterationb2} holds for all $\ell\geq 2$, while it is an equality for $\ell=1$.

Before proving these two claims, let us use them to complete the proof of the lemma. We observe that $z_1^\pm(\alpha,q)=1$. By induction, this together with~\eqref{eq:iteration},~\eqref{eq:iterationa2} and~\eqref{eq:iterationb2} implies that $z_\ell^\pm(\alpha,q)<1$ for any $\ell\ge2$. This completes the proof of~\eqref{II-a} and~\eqref{II-b}, even with strict inequality for $\ell\geq 2$, except for the cases $q=4-\frac 6\alpha$, $\alpha\in(-1/2,0)$ for~\eqref{II-a} and $q=2+\frac 2\alpha$, $\alpha\in(-1/2,0)$ for~\eqref{II-b}. In these exceptional cases, the equality in~\eqref{eq:iterationa2} and~\eqref{eq:iterationb2} for $\ell=1$ implies that $z_2^\pm(\alpha,q)=1$. The strict inequalities for $\ell\geq 2$ then allow us to repeat the induction argument and arrive again at~\eqref{II-a} and~\eqref{II-b}, now with strict inequality for $\ell\geq 3$.

Thus, we have reduced the proof of the lemma to the proof of~\eqref{eq:iterationa2} and~\eqref{eq:iterationb2}. We set
$$
F_\ell^\pm(\alpha,q) := \mathsf a_\ell^\pm(\alpha,q)+\mathsf b_\ell^\pm(\alpha,q) \,.
$$
It is convenient to consider these as functions of $q$ in the interval $(-2/\ell,\infty)$. We note that $F_\ell^\pm(\alpha,\cdot)$ is a quotient of affine functions. A computation, plus some straightforward estimates show that $F_\ell^\pm(\alpha,\cdot)$ is an increasing function in the interval $(-2/\ell,\infty)$. Moreover, as $q\to\infty$, it converges to its horizontal asymptote $y_\ell^\pm(\alpha)$, given by
$$
y_\ell^+(\alpha) :=\frac{(\ell+1-2\,\alpha)\,(\ell+2)}{(\ell+1)\,(\ell+2-2\,\alpha)}
\quad\text{and}\quad
y_\ell^-(\alpha) := \frac{(\ell+1)\,(\ell+2\,\alpha)}{(\ell+1+2\,\alpha)\,\ell}
\,.
$$
To complete the proof of~\eqref{eq:iterationa2} and~\eqref{eq:iterationb2}, we now consider these two cases separately.
\\[4pt]
\emph{Proof of Inequality~\eqref{eq:iterationa2}.}
A computation shows that $y^+_\ell(\alpha)\leq 1$ if $\alpha\in[0,1]$. Thus, in this case we have $F_\ell^+(\alpha,q)<y^+_\ell(\alpha)\leq 1$ for all $q>-\frac2\ell$, which immediately implies the assertion.

For $\alpha<0$ we observe that $F_\ell^+(\alpha,q_\ell^+(\alpha))=1$ for $q_\ell^+(\alpha):= 4-2\,\frac{\ell+2}{\alpha}$. Thus, by monotonicity, we have $F_\ell^+(\alpha,q)<1$ for all $-\,\frac2\ell<q<q_\ell^+(\alpha)$. In particular, we have $F_\ell^+(\alpha,q)<1$ for all $\ell\geq 1$ when $0\leq q<4-\frac 6\alpha=\min_{\ell\geq 1} q_\ell^+(\alpha)$. We also see that for $q=4-\frac 6\alpha$, this inequality holds for all $\ell\geq 2$ with equality for $\ell=1$. This proves~\eqref{eq:iterationa2}.
\\[4pt]
\emph{Proof of Inequality~\eqref{eq:iterationb2}.}
A computation shows that $y^-_\ell(\alpha)\leq 1$ if $\alpha\in(-1/2,0]$. Thus, in this case we have $F_\ell^-(\alpha,q)<y^-_\ell(\alpha)\leq 1$ for all $q>-\frac2\ell$, which immediately implies the assertion.

For $\alpha>0$ we observe that $F_\ell^-(\alpha,q_\ell^-(\alpha))=1$ for $q_\ell^-(\alpha):= 2\,\big( \frac\ell\alpha + \frac{1+2\ell}{2+\ell}\big)$. Thus, by monotonicity, we have $F_\ell^-(\alpha,q)<1$ for all $-\,\frac2\ell<q<q_\ell^-(\alpha)$. In particular, we have $F_\ell^-(\alpha,q)<1$ for all $\ell\geq 1$ when $0\leq q<2+\frac 2\alpha=\min_{\ell\geq 1} q_\ell^-(\alpha)$. We also see that for $q=2+\frac2\alpha$, this inequality holds for all $\ell\geq 2$ with equality for $\ell=1$. This proves~\eqref{eq:iterationb2}.
\end{proof}

\begin{remark}\label{ap1rem}
For later purposes we record that clearly $\lim_{\ell\to+\infty}\big(\mathsf a_\ell^\pm(\alpha,q),\mathsf b_\ell^\pm(\alpha,q)\big)=(0,1)$. A more precise analysis shows that $\mathsf a_\ell^\pm(\alpha,q)+\mathsf b_\ell^\pm(\alpha,q)<1$ for all $\ell\in\N$ large enough (depending on $\alpha$ and $q$). Proceeding as in the above prove, we deduce that $z_\ell^\pm(\alpha,q)$ achieves a maximum for some finite $\ell\ge2$. In particular, we have that
$$
\sup_{\ell\geq 1} \frac{2\,(1-\alpha)}{\ell\,(\ell+1-2\,\alpha)}\,\zeta_\ell(q) + \sup_{\ell\geq 1} \frac{2\,(1+2\,\alpha)}{(\ell+1)\,(\ell+2\,\alpha)}\,\zeta_\ell(q) <\infty \,.
$$
\end{remark}

\subsubsection*{$\bullet$ Proof of Theorem~\ref{thm:BecknerGenSpinor}.}
We recall the $\mathcal B_{\alpha,q}$ was defined in~\eqref{eq:defb}. We can rewrite this as
$$
\mathcal B_{\alpha,q} = \min\left\{ \inf_{\ell\geq 1} \frac{\ell(\ell+1-2\,\alpha)}{\zeta_\ell(q)} \,,\, \inf_{\ell\geq 1} \frac{(\ell +1)\,(\ell+2\,\alpha)}{\zeta_\ell(q)} \right\} \,.
$$
It follows from Remark~\ref{ap1rem} that $\mathcal B_{\alpha,q}>0$, which, according to the discussion before Lemma~\ref{lem:ap1}, implies the first assertion of Theorem~\ref{thm:BecknerGenSpinor}.

Moreover, recalling the definition of $\mathsf q(\alpha)$ in~\eqref{q}, we deduce from Lemma~\ref{lem:ap1} that
$$
\mathcal B_{\alpha,q} = \min\Big\{ 2\,(1-\alpha)\,,\, 2\,(1+2\,\alpha)\Big\} = 2 \, \mathsf m(\alpha)\quad\mbox{if}\quad q\leq\mathsf q(\alpha)\,.
$$
This proves the second assertion of Theorem~\ref{thm:BecknerGenSpinor}.

Still assuming $q\leq q(\alpha)$, let us discuss the cases of equality in~\eqref{Ineq:GenPoincare}. Clearly, both sides of the inequality vanish for constant spinors. Conversely, assume that equality holds in the inequality for some spinor $\Phi$ decomposed according to~\eqref{Phi-chik}. By Lemma~\ref{lem:ap1}, we deduce that $\chi_k=0$ for all $k\neq -\,3$, $-\,2$, $0$, $1$, $2$. (The cases $k=-\,3$, $2$ are only relevant when $q=\mathsf q(\alpha)$.)

We note that both components of angular spinors in $\mathcal H_{-3}\oplus\mathcal H_{-2} \oplus\mathcal H_0\oplus \mathcal H_{1}\oplus\mathcal H_{2}$ are polynomials in $\omega_1$, $\omega_2$, $\omega_3$ of degree $\leq 2$. Meanwhile, from Lemma~\ref{Lem:SpinorialS2interpolation} we know that $\Phi(\omega) = (1-\zeta\cdot\omega)^{-2/q}\chi_0$ for some $\zeta\in\R^3$ with $|\zeta|<1$ and $\chi_0\in\C^2$. We deduce that $\zeta=0$ (unless \hbox{$\chi_0=0$}). Thus, $\Phi$ is a constant spinor, as claimed. This completes the proof of Theorem~\ref{thm:BecknerGenSpinor}.
\qed

\subsection{A symmetry range}\label{Sec:symmetryrange}

In this subsection, our goal is to characterize a set of parameters $(\alpha, p)\in(-1/2, 1]\times(2, 6)$ such that $C^\star_{\alpha, p}= C_{\alpha,p}$. We adapt the method of~\cite{MR2905786}, but significant changes have to be implemented in order to deal with spinors.
\begin{theorem}\label{Thm:symmetry1} If one of the following conditions is satisfied,
\begin{subequations}
\begin{align}
&\label{SymZone1}
\alpha\in\big(-\tfrac12,0\big] \quad\mbox{and}\quad 2<p\le2\,\frac{3+4\,(\alpha+1)^2}{3+4\,\alpha^2}\,,\\
&\label{SymZone2}
\alpha\in\big(0,\tfrac12\big)\cup\big(\tfrac12,1\big)\quad\mbox{and}\quad 2<p\le\min\left\{2\,\frac{3+2\,\alpha}{1+2\,\alpha}, \; 2\,\frac{7-10\,\alpha+4\,\alpha^2}{3-6\,\alpha+ 4\,\alpha^2}\right\}\,,
\end{align}
\end{subequations}
then $C_{\alpha,p}=C^\star_{\alpha,p}$ and any optimizer for Inequality~\eqref{logarithmicSCKN} is symmetric of type $\mathcal H_0$.
\end{theorem}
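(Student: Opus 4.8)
The plan is to adapt the ``reduction to a one‑dimensional Gagliardo--Nirenberg inequality'' strategy of~\cite{MR2905786}, feeding in the spinorial spherical inequality of Theorem~\ref{thm:BecknerGenSpinor} in place of its scalar ancestor. Fix $(\alpha,p)$ in the stated range and let $\phi$ be an optimizer for~\eqref{logarithmicSCKN} (it exists by Theorem~\ref{Thm:Existence}); normalize it so that $\nrmc\phi p=1$, whence the left‑hand side of~\eqref{logarithmicSCKN} equals $C_{\alpha,p}$. Since $\big(\sigma\cdot L-\alpha+\tfrac12\big)^2=\mathcal K_\alpha+\big(\alpha-\tfrac12\big)^2$, writing $\lambda:=\big(\alpha-\tfrac12\big)^2$ this left‑hand side is
\[
\ic{|\partial_s\phi|^2}+\ic{\lrangle\phi{\mathcal K_\alpha\,\phi}}+\lambda\ic{|\phi|^2}\,.
\]
An elementary comparison with the definition~\eqref{q} of $\mathsf q(\alpha)$ shows that both~\eqref{SymZone1} and~\eqref{SymZone2} force $p\le\mathsf q(\alpha)$, so that Theorem~\ref{thm:BecknerGenSpinor} applies slicewise in~$s$ with $q=p$ and $\mathcal B_{\alpha,p}=2\,\mathsf m(\alpha)$. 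Writing $\rho_2(s):=\nrms{\phi(s,\cdot)}2$ and $\rho_p(s):=\nrms{\phi(s,\cdot)}p$, this gives, for a.e.\ $s\in\R$,
\[
\is{\lrangle{\phi(s,\cdot)}{\mathcal K_\alpha\,\phi(s,\cdot)}}\ge\frac{2\,\mathsf m(\alpha)}{p-2}\big(\rho_p(s)^2-\rho_2(s)^2\big)\,,
\]
while the Cauchy--Schwarz (diamagnetic‑type) inequality gives $\nrms{\partial_s\phi(s,\cdot)}2\ge|\rho_2'(s)|$ and, trivially, $\nrms{\phi(s,\cdot)}2=\rho_2(s)$.

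Integrating these slicewise bounds over~$s$ reduces $C_{\alpha,p}$ to a one‑dimensional quantity of the shape
\[
\nrmr{\rho_2'}2^2+\Big(\lambda-\tfrac{2\,\mathsf m(\alpha)}{p-2}\Big)\nrmr{\rho_2}2^2+\tfrac{2\,\mathsf m(\alpha)}{p-2}\ir{\rho_p^2}\,,
\]
and the task is to show that this is at least $C^\star_{\alpha,p}\,\nrmc\phi p^2=C^\star_{\alpha,p}$. Here the \emph{judicious application of Minkowski's inequality} enters: since $p/2\ge1$, the triangle inequality in $\mathrm L^{p/2}(\R)$ applied to the pointwise decomposition $\rho_p^2=\rho_2^2+\big(\rho_p^2-\rho_2^2\big)$, together with the slicewise lower bound for the angular energy above, bounds $\nrmc\phi p^2=\|\rho_p^2\|_{\mathrm L^{p/2}(\R)}$ from above in terms of $\nrmr{\rho_2}p^2$ and the total angular energy $\ic{\lrangle\phi{\mathcal K_\alpha\,\phi}}$. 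Feeding this back and invoking the one‑dimensional inequality~\eqref{OneD} — whose sharp constant, by the scaling~\eqref{star:t}, is exactly $K_{\lambda,p}=C^\star_{\alpha,p}$ — closes the estimate. The role of the hypotheses~\eqref{SymZone1} and~\eqref{SymZone2} is precisely to guarantee that every constant in this chain balances; they are the explicit algebraic conditions on $(\alpha,p)$ under which the $\mathcal K_\alpha$‑inequality with $\mathcal B_{\alpha,p}=2\,\mathsf m(\alpha)$ is available and the one‑dimensional reduction goes through quantitatively, the appearance of two bounds in~\eqref{SymZone2} stemming from treating the $k\ge1$ and $k\le-2$ spectral sectors of $\mathcal K_\alpha$ separately in the Minkowski step.

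Finally, since symmetric competitors of type~$\mathcal H_0$ are admissible one always has $C^\star_{\alpha,p}\le C_{\alpha,p}$, so the chain above must be an equality for the optimizer~$\phi$. Tracing the equality cases pins $\phi$ down: equality in Theorem~\ref{thm:BecknerGenSpinor} in the regime $q\le\mathsf q(\alpha)$ forces $\phi(s,\cdot)\in\mathcal H_0$ for a.e.~$s$, i.e.\ $\phi(s,\omega)=\chi(s)$ with $\chi(s)\in\C^2$ a constant spinor independent of~$\omega$; equality in the diamagnetic inequality then forces $\chi'(s)$ to be a real multiple of $\chi(s)$, so $\chi(s)=u(s)\,\chi_0$ for a fixed $\chi_0\in\C^2$; and equality in~\eqref{OneD} forces $u$ to be, up to translation and a multiplicative constant, the profile~\eqref{eq:minsymm}. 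Hence every optimizer is symmetric of type~$\mathcal H_0$ and $C_{\alpha,p}=C^\star_{\alpha,p}$.

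The main obstacle is the quantitative heart of the middle step: carrying out the Minkowski bookkeeping carefully enough to identify the admissible range, and then checking that the several equality conditions — in the spinorial spherical inequality, in the diamagnetic estimate, and in the Minkowski/triangle step — are simultaneously compatible only with type‑$\mathcal H_0$ profiles. Both difficulties are genuinely new relative to~\cite{MR2905786}: the angular operator $\mathcal K_\alpha$ depends on~$\alpha$ in a non‑monotone way, and the problem is intrinsically complex‑valued, so the equality analysis must also exclude any type‑$\mathcal H_{-2}$ component and any nontrivial mixing of spherical modes.
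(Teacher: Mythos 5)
Your outline shares two ingredients with the paper's proof---the spinorial spherical inequality of Theorem~\ref{thm:BecknerGenSpinor}, applied slicewise in $s$, and a Minkowski step to reconcile the $s$- and $\omega$-integrals---but the overall architecture is different and, as described, it does not close. Let me point out where it breaks.

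First, you never invoke the Keller/Lieb--Thirring estimate (Lemma~\ref{Lem:Keller}). In the actual proof, that lemma is applied for a.e.\ fixed $\omega\in\Sp$ to the one-dimensional Schr\"odinger operator $-\partial_s^2-|\phi(\cdot,\omega)|^{p-2}$, with $\gamma=\tfrac12\tfrac{p+2}{p-2}$, and is the device that packages the $\partial_s$-term and the $|\phi|^p$-term \emph{together} into an expression involving only $u(\omega)=(\int_\R|\phi(s,\omega)|^2\,ds)^{1/2}$. This step also produces the constant $D$ whose comparison with $(\alpha-\tfrac12)^2$, via~\eqref{phi-phistar}, is exactly what carries the information $C_{\alpha,p}\le C^\star_{\alpha,p}$. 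You replace all of this by the much cruder diamagnetic bound $\|\partial_s\phi(s,\cdot)\|_{\mathrm L^2(\Sp)}\ge|\rho_2'(s)|$ and a direct appeal to the one-dimensional inequality~\eqref{OneD}. That is a different, and lossier, reduction.

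Second, and this is the fatal step: after you apply Theorem~\ref{thm:BecknerGenSpinor} slicewise in $s$ with $q=p$ and integrate, the angular energy controls only
\[
\int_\R\bigl(\rho_p(s)^2-\rho_2(s)^2\bigr)\,ds\,,
\]
i.e.\ the $\mathrm L^1(\R)$ norm of the nonnegative function $s\mapsto\rho_p(s)^2-\rho_2(s)^2$. Your Minkowski step, on the other hand, decomposes
\[
\nrmc\phi p^2=\bigl\|\rho_p^2\bigr\|_{\mathrm L^{p/2}(\R)}\le\bigl\|\rho_2^2\bigr\|_{\mathrm L^{p/2}(\R)}+\bigl\|\rho_p^2-\rho_2^2\bigr\|_{\mathrm L^{p/2}(\R)}
\]
and then needs to bound $\|\rho_p^2-\rho_2^2\|_{\mathrm L^{p/2}(\R)}$ by the (integrated) angular energy. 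But an $\mathrm L^1(\R)$ bound does not control an $\mathrm L^{p/2}(\R)$ bound when $p>2$: the two norms are incomparable on the line. The same obstruction shows up if you try instead to absorb $\int_\R\rho_p^2\,ds$ into the 1D functional---one is then left needing an inequality like $\|\rho_p\|_{\mathrm L^2(\R)}^2\gtrsim\|\rho_p\|_{\mathrm L^p(\R)}^2$, which also fails. There is no ``careful bookkeeping'' that repairs this; the exponents simply do not match.

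Third, the choice $q=p$ is not the right one. In the paper the exponent is $q=\tfrac{2(p+2)}{6-p}$; it is fixed by the H\"older step in the third step of the proof (matching $q=\tfrac{2\gamma}{\gamma-1}$ for the Lieb--Thirring exponent $\gamma$), and it is only for this $q$ that the condition $(\alpha-\tfrac12)^2\le\mathcal B_{\alpha,q}/(q-2)$ unwinds to the algebraic restrictions~\eqref{SymZone1}--\eqref{SymZone2} (with the extra constraint $q\le\mathsf q(\alpha)$ giving the first branch in~\eqref{SymZone2}). With $q=p$, the conditions that would emerge from your chain would not be those of the statement.

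In short: the paper first applies Lieb--Thirring in $s$ for fixed $\omega$, then Theorem~\ref{thm:BecknerGenSpinor} in $\omega$ for fixed $s$, and then \emph{both} Minkowski \emph{and} H\"older (with $q=\tfrac{2(p+2)}{6-p}$) to merge the two; the constants then match exactly under~\eqref{eq:symm2proofass}. Your ordering inverts this, drops Lieb--Thirring, and tries to close with a single Minkowski step that needs $\mathrm L^{p/2}$-control where only $\mathrm L^1$-control is available. The equality discussion in your last paragraph (equality in the spherical inequality forcing $\phi(s,\cdot)\in\mathcal H_0$, then rigidity of the one-dimensional profile) is on the right track and matches the paper's Step~5, but it presupposes that the quantitative chain is tight, which the gap above prevents you from establishing.
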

\smallskip\noindent See Fig.~\ref{Fig:SCKNlog-Detail-1} for a plot of Conditions~\eqref{SymZone1} and~\eqref{SymZone2}.

\smallskip\noindent\emph{Proof.} We split the proof in several steps.
\subsubsection*{$\rhd$ First step} The following result was suggested by Keller in~\cite{MR0121101} and proved independently by Lieb and Thirring in~\cite{Lieb-Thirring76}. The underlying one-dimensional interpolation inequality was established by Nagy~\cite{MR4277}. As it is stated below, it is taken from~\cite{MR2905786}. Let us define
\be{CLT}
c_{\rm LT}(\gamma):=\frac{\pi^{-1/2}}{\gamma-1/2}\,\frac{\Gamma(\gamma+1)}{\Gamma(\gamma+1/2)}\(\frac{\gamma-1/2}{\gamma+1/2}\)^{\gamma+1/2}\,.
\ee
\begin{lemma}\label{Lem:Keller} Let $V=V(s)$ be a non-negative real valued potential in $\mathrm L^{\gamma+1/2}(\R)$ for some $\gamma>1/2$. If $-\,\lambda_1(V)$ is the lowest eigenvalue of the Schr\"odinger operator $-\,\frac{d^2}{ds^2}-V$, then
\be{Ineq:Keller}
\lambda_1(V)^\gamma\le c_{\rm LT}(\gamma)\ir{ V^{\gamma +1/2}(s)}\,,
\ee
with equality if and only if
\[
V(s)=B^2\,V_0\big(B\,(s-C)\big)\quad\mbox{and}\quad V_0(s):=\frac{\gamma^2-1/4}{(\cosh s)^2}\quad\forall\,s\in\R\,,
\]
where $B>0$, $C\in\R$ are constants. In that case, $\lambda_1(V_0)=\(\gamma-1/2\)^2$ and the corresponding eigenspace is generated by
\[
u_0(s)=\(\frac{\Gamma(\gamma)}{\sqrt\pi\,\Gamma(\gamma-1/2)}\)^{1/2}\,\big(\cosh s\big)^{1/2-\gamma}\,.
\]
\end{lemma}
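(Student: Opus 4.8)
The assertions about $V_0$, $u_0$ and $\lambda_1(V_0)=(\gamma-1/2)^2$ are a direct verification: differentiating $(\cosh s)^{1/2-\gamma}$ twice and using $\sinh^2 s=\cosh^2 s-1$ gives $-\,u_0''-V_0\,u_0=-(\gamma-1/2)^2\,u_0$, and since $u_0>0$ it is the ground state; the prefactor in $u_0$ is the one that makes $\nrmr{u_0}2=1$, since $\ir{(\cosh s)^{-(2\gamma-1)}}=\sqrt\pi\,\Gamma(\gamma-\tfrac12)/\Gamma(\gamma)$. So the substance of the lemma is Inequality~\eqref{Ineq:Keller} with the sharp constant~\eqref{CLT} and the characterization of its equality cases. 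The plan is to deduce~\eqref{Ineq:Keller} by duality from a sharp one-dimensional interpolation inequality.

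First I would use the variational characterization $\lambda_1(V)=\sup\big\{\ir{V\,|u|^2}-\nrmr{u'}2^2\ :\ u\in\mathrm H^1(\R),\ \nrmr u2=1\big\}$, which makes sense for any $V\ge0$ in $\mathrm L^{\gamma+1/2}(\R)$ (and equals $0$ when $V\equiv0$, in which case~\eqref{Ineq:Keller} is trivial). For any admissible $u$, Hölder's inequality with conjugate exponents $\gamma+\tfrac12$ and $\tfrac{\gamma+1/2}{\gamma-1/2}$ gives
\[
\ir{V\,|u|^2}\le\nrmr V{\gamma+1/2}\,\nrmr u q^2\,,\qquad q:=\tfrac{2\gamma+1}{\gamma-1/2}\in(2,\infty)\,,
\]
and one observes $u\in\mathrm H^1(\R)\subset\mathrm L^q(\R)$, so the right-hand side is finite.

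The second step is to insert Nagy's sharp interpolation inequality~\cite{MR4277}, in the normalization of~\cite{MR2905786}: for $\nrmr u2=1$ one has $\nrmr u q^2\le C_q\,\nrmr{u'}2^{2\theta}$ with $\theta=\tfrac12-\tfrac1q=\tfrac1{2\gamma+1}\in(0,1)$, equality holding precisely for translates and dilates of $(\cosh s)^{-2/(q-2)}=(\cosh s)^{1/2-\gamma}$. Writing $t=\nrmr{u'}2^2$, one is then left with the elementary maximization of $t\mapsto C_q\,\nrmr V{\gamma+1/2}\,t^\theta-t$ over $t\ge0$, whose value is $(1-\theta)\,\theta^{\theta/(1-\theta)}\big(C_q\,\nrmr V{\gamma+1/2}\big)^{1/(1-\theta)}$; since $\tfrac1{1-\theta}\cdot\tfrac1{\gamma+1/2}=\tfrac1\gamma$, this is exactly $c\,\big(\ir{V^{\gamma+1/2}}\big)^{1/\gamma}$ for an explicit $c$, which yields~\eqref{Ineq:Keller} with the constant $c^\gamma$. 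That constant is then identified with $c_{\rm LT}(\gamma)$ either by substituting the explicit value of $C_q$ and simplifying the $\Gamma$-factors, or, more quickly, by noting that $(V_0,u_0)$ saturates every inequality above, so the sharp constant must equal $\lambda_1(V_0)^\gamma/\ir{V_0^{\gamma+1/2}}$; using $\ir{(\cosh s)^{-(2\gamma+1)}}=\sqrt\pi\,\Gamma(\gamma+\tfrac12)/\Gamma(\gamma+1)$ and $\gamma^2-\tfrac14=(\gamma-\tfrac12)(\gamma+\tfrac12)$ one recovers formula~\eqref{CLT}.

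Finally I would carry out the equality analysis. If equality holds in~\eqref{Ineq:Keller}, then $\lambda_1(V)>0$ and, since $-\lambda_1(V)$ lies below the essential spectrum $[0,\infty)$, it is attained at the positive simple ground state $u_\ast$, for which each of the three inequalities above must be an equality: equality in Hölder forces $V^{\gamma-1/2}$ proportional to $|u_\ast|^2$, i.e.\ $V=c_1\,|u_\ast|^{2/(\gamma-1/2)}$; equality in Nagy's inequality forces $u_\ast$ to be, up to a constant, a translated dilate $\big(\cosh(B(s-C))\big)^{1/2-\gamma}$; and equality in the scalar optimization fixes $B$ in terms of $\nrmr V{\gamma+1/2}$. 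Combining these gives $V(s)=B^2\,V_0\big(B(s-C)\big)$, as claimed. The step I expect to require the most care is the uniqueness of the extremals in Nagy's sharp inequality — all the other equality criteria are textbook — which rests on the ODE/shooting analysis behind~\cite{MR4277,MR2905786} showing that, modulo scaling and translation, the only positive even solutions of the corresponding Euler–Lagrange equation are the functions $(\cosh s)^{1/2-\gamma}$.
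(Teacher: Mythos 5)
Your proposal is correct and follows essentially the same route as the paper: both proofs combine H\"older's inequality (with conjugate exponents $\gamma+1/2$ and $p/2$, i.e.\ your $q=p$) with the sharp one-dimensional Gagliardo--Nirenberg (Nagy) inequality, then conclude by a one-parameter scalar optimization. The only cosmetic difference is that you use the multiplicative form $\nrmr uq^2\le C_q\,\nrmr{u'}2^{2\theta}$ with $\nrmr u2=1$ and optimize over $t=\nrmr{u'}2^2$, whereas the paper uses the additive form~\eqref{OneD} and optimizes over the parameter $\lambda$ via the scaling relation $K_{\lambda,p}=\lambda^{1/2+1/p}K_{1,p}$; these are equivalent, and the equality analysis (saturation of H\"older plus uniqueness in Nagy's inequality from Lemma~\ref{nagy}) is the same in both.
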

\noindent See Appendix~\ref{App00} for a proof. Here $u_0$ is normalized by the condition $\nrmr{u_0}2=1$.
\medskip Let
\[
\mathcal F_\alpha[\phi]:=\nrmc{\partial_s\phi}2^2+\big\|\big(\sigma\cdot L+\tfrac12-\alpha\big)\,\phi\|_{\mathrm L^2(\R\times\Sp)}^2-C_{\alpha,p}\,\nrmc{\phi}p^2\,.
\]
We assume that $\phi$ is an optimizer for~\eqref{logarithmicSCKN}. By homogeneity we may assume without loss of generality that
\be{assumption}
\nrmc{\phi}p^{p-2}=C_{\alpha,p}\le C^\star_{\alpha,p}\,.
\ee
It follows from this normalization that $\phi$ satisfies the Euler--Lagrange equation in the form
\be{EL}
-\,\partial_s^2\,\phi + \big(\sigma\cdot L-\alpha+\tfrac12\big)^2\,\phi = |\phi|^{p-2}\,\phi\quad\text{in}\quad\R\times\Sp\,.
\ee
Multiplying this equation by $\phi$ and integrating, we deduce that
\begin{equation}
\label{eq:phinormalization}
0=\ic{\(|\partial_s\phi|^2+\big|\big(\sigma\cdot L-\alpha+\tfrac12\big)\,\phi\big|^2-|\phi|^p\)}=\mathcal F_\alpha[\phi]\,.
\end{equation}

Similar estimates apply in the symmetric case. Indeed, for $\alpha>-1/2$, symmetric optimizers for~\eqref{logarithmicSCKN} can be rewritten, after a translation, as
\be{phistar}
\phi_\star(s,\omega) = u_\star(s)\,\chi \,,
\ee
where $\chi\in\C^2$ and $u_\star(s)=A\,u_0(B\,s)$ with $B=\frac12\,(p-2)\,\sqrt\lambda$ and $\lambda = \big(\alpha-\tfrac12\big)^2$, that is,
\be{ustar}
u_\star(s)=\(\tfrac{p\,\lambda}2\)^\frac1{p-2}\(\cosh\(\tfrac12\,(p-2)\,\sqrt\lambda\,s\)\)^{-\frac2{p-2}}\quad\forall\,s\in\R\,.
\ee
Without loss of generality, we shall assume that $|\chi|=1$.
See Appendix~\ref{App00} for details. As above, let us choose $A$ such that
\begin{equation}
\label{eq:normalizationsymm}
\nrmr{u_\star}p^{p-2}=C^\star_{\alpha,p}\,.
\end{equation}
As a consequence of this and~\eqref{assumption}, we obtain
\be{phi-phistar}
\ic{|\phi|^p}\le\ic{|\phi_\star|^p}\,.
\ee
For the same reasons as before, $\phi_\star$ solves~\eqref{EL} and
\be{eq:phistarnormalization}
0=\ic{\(|\partial_s\phi_\star|^2+\big|\big(\sigma\cdot L-\alpha+\tfrac12\big)\,\phi_\star\big|^2-|\phi_\star|^p\)}\,.
\ee

After these preliminaries, we establish a lower bound for $\mathcal F_\alpha[\phi]$. Let
\be{gamma:p}
\gamma=\frac12\,\frac{p+2}{p-2}>\frac12\,.
\ee
For a.e.~\hbox{$\omega\in\Sp$}, we apply Lemma~\ref{Lem:Keller} with $V(s)=|\phi(s,\omega)|^{p-2}$ and infer that the lowest eigenvalue $-\,\lambda_1(V)$ of the operator $\,-\frac{d^2}{ds^2}- V\,$ acting in $\mathrm L^2(\R\times\Sp)$ is bounded from below according to
$$
-\,\lambda_1(V)\ge-\,c_{\rm LT}(\gamma)^{1/\gamma}\(\ir{|\phi(s,\omega)|^p}\)^{1/\gamma}\,.
$$
The same estimate applies to the lowest eigenvalue of the corresponding operator acting in $\mathrm L^2(\R\times\Sp,\C^2)$, since the operator acts trivially on the spin. By the variational characterization of the first eigenvalue, we deduce that for a.e.~$\omega\in\Sp$ we have
\[
\ir{\(|\partial_s\phi(s,\omega)|^2-|\phi(s,\omega)|^p\)}\ge-\,c_{\rm LT}(\gamma)^{1/\gamma}\(\ir{|\phi(s,\omega)|^p}\)^{1/\gamma}|u(\omega)|^2\,,
\]
where we set
\[
u(\omega):=\sqrt{\ir{|\phi(s,\omega)|^2}}\,.
\]

Integrating this inequality with respect to $\omega\in\Sp$ we obtain
\[
\mathcal F_\alpha[\phi]\ge -\,c_{\rm LT}(\gamma)^{1/\gamma}\is{\(\ir{|\phi(s,\omega)|^p}\)^{1/\gamma}|u(\omega)|^2}+\ic{\left|\(\sigma\cdot L-\alpha+\tfrac12\)\phi\right|^2}\,.
\]

\subsubsection*{$\rhd$ Second step} Applying Theorem~\ref{thm:BecknerGenSpinor} to $\phi(s,\cdot)$ for almost every $s\in\R$ and integrating with respect to $s$ we find that, for each $q\in\big(2,\infty\big)$,
\begin{align*}
&\ic{\left|\(\sigma\cdot L-\alpha+\tfrac12\)\phi\right|^2} = \ic{\lrangle\phi{\mathcal K_\alpha\,\phi}} +\(\alpha -\tfrac12\)^2 \ic{|\phi|^2}
\\
&\ge\frac{\mathcal B_{\alpha,q}}{q-2}\int_\R \(\(\is{|\phi(s,\omega) |^q}\)^{2/q}-\is{|\phi(s,\omega)|^2}\) \,ds +\(\alpha -\tfrac12\)^2 \ic{|\phi|^2}\,.
\end{align*}
By Minkowski's inequality (see for instance~\cite[Section~2.4, p.~47]{MR1817225}),
\[
\(\is {u(\omega)^q}\)^{2/q} = \(\is {\({\ir{|\phi(s,\omega)|^2}}\)^{q/2}}\)^{2/q} \le \int_\R \(\is{|\phi(s,\omega) |^q}\)^{2/q}\,ds\,.
\]
By combining this with the inequality from Step 1, we obtain for all $q\in\big(2,\mathsf q(\alpha)\big)$,
\begin{multline*}
\mathcal F_\alpha[\phi]\ge -\,c_{\rm LT}(\gamma)^{1/\gamma}\is{\(\ir{|\phi(s,\omega)|^p}\)^{1/\gamma}|u(\omega)|^2}
\\
+\frac{\mathcal B_{\alpha,q}}{q-2} \(\(\is{|u(\omega)|^q}\)^{2/q}-\is{|u(\omega)|^2}\) +\(\alpha -\tfrac12\)^2 \is{|u(\omega)|^2}\,.
\end{multline*}

\subsubsection*{$\rhd$ Third step} Assuming $\gamma >1$, we apply H\"older's inequality,
\[
\is{\(\ir{|\phi(s,\omega)|^p}\)^\frac1\gamma|u(\omega)|^2}\le\(\ic{|\phi(s,\omega)|^p}\)^\frac1\gamma\(\is{|u(\omega)|^\frac{2\,\gamma}{\gamma-1}}\)^\frac{\gamma-1}\gamma\,.
\]
Thus, setting
\[
D:=c_{\rm LT}(\gamma)^{1/\gamma}\(\ic{|\phi(s,\omega)|^p}\)^\frac1\gamma\,,
\]
we obtain
\begin{multline*}
\mathcal F_\alpha[\phi]\ge - D\(\is{|u(\omega)|^{\frac{2\,\gamma}{\gamma-1}}}\)^\frac{\gamma-1}{\gamma}+\(\alpha -\tfrac12\)^2 \is{|u(\omega)|^2}
\\
+\frac{\mathcal B_{\alpha,q}}{q-2} \(\(\is{|u(\omega)|^q}\)^{2/q}-\is{|u(\omega)|^2}\)\,.
\end{multline*}
At this point we choose the parameter $q$ such that
\be{pqgamma}
q=\frac{2\,\gamma}{\gamma-1}=2\,\frac{p+2}{6-p} \,.
\ee
This choice is consistent with the requirement $\gamma\in(1,+\infty)$ if and only if $p\in(2,6)$. We find that
\[
\mathcal F_\alpha[\phi]\ge \(\frac{\mathcal B_{\alpha,q}}{q-2}-D\) \(\is{|u|^q}\)^\frac2q+\(\(\alpha-\tfrac12\)^2-\frac{\mathcal B_{\alpha,q}}{q-2}\)\is{|u|^2} =: \mathcal E[u]\,.
\]

\subsubsection*{$\rhd$ Fourth step} By~\eqref{phi-phistar}, we know that
\be{D-estimate}
D= c_{\rm LT}(\gamma)^\frac1\gamma\(\ic{|\phi(s,\omega)|^p}\)^\frac1\gamma\le c_{\rm LT}(\gamma)^\frac1\gamma\(\ic{|\phi_\star|^p}\)^\frac1\gamma=\(\alpha-\tfrac12\)^2\,.
\ee
The last equality follows from the explicit value of $c_{\rm LT}(\gamma)$ in~\eqref{CLT} and the explicit value of~\eqref{eq:normalizationsymm} given in Proposition~\ref{symmetricproblem}. More conceptually, one can also note that $-\,\big(\alpha-\tfrac12\big)^2$ is the smallest eigenvalue of the Schr\"odinger operator $-\,\partial_s^2 - |\phi_\star|^{p-2}$ and that the potential $|\phi_\star|^{p-2}$ saturates the inequality in Lemma~\ref{Lem:Keller} for the given value of $\gamma$.

We now assume, in addition, that we have
\begin{equation}
\label{eq:symm2proofass}
\(\alpha-\tfrac12\)^2 \leq \frac{\mathcal B_{\alpha,q}}{q-2} \,.
\end{equation}
Then, from~\eqref{D-estimate} we deduce that
\[
D\le \frac{\mathcal B_{\alpha,q}}{q-2}\,.
\]

To proceed, we use the fact that $d\omega$ is a probability measure, and by H\"older's inequality, we get
\[
\(\is{|u|^q}\)^{\frac2q}\ge\is{|u|^2}\,.
\]
Thus,
\[
\(\frac{\mathcal B_{\alpha,q}}{q-2}-D\)\(\is{|u|^q(\omega)}\)^{\frac2q}\ge\(\frac{\mathcal B_{\alpha,q}}{q-2}-D\)\is{|u|^2}\,,
\]
and therefore
\[
\mathcal E[u]\ge\(\(\alpha-\tfrac12\)^2-D\)\is{|u|^2}\,.
\]
Recalling~\eqref{eq:phinormalization} and using~\eqref{D-estimate} again, we obtain the chain of inequalities
\[
0=\mathcal F_\alpha[\phi]\ge\mathcal E[u]\ge0\,.
\]
This requires that equality must hold in each step. In particular, equality in~\eqref{D-estimate} gives $D=\(\alpha-\tfrac12\)^2$. Since equality in~\eqref{D-estimate} came from the inequality $C_{\alpha,p}\leq C^\star_{\alpha,p}$ in~\eqref{assumption}, we deduce that $C_{\alpha,p} = C^\star_{\alpha,p}$ under Assumption~\eqref{eq:symm2proofass}.

To complete the proof of the first part of Theorem~\ref{Thm:symmetry1}, we will now show that Assumptions~\eqref{SymZone1} and~\eqref{SymZone2} imply Assumption~\eqref{eq:symm2proofass}.

We first assume that $\alpha\in(-1/2,0]$ and consequently that $2<p\leq 2\,\big(3+4\,(\alpha+1)^2\big)/\big(3+4\,\alpha^2\big)$ by~\eqref{SymZone1}. Note that this implies $p<6$, so $q$ is well defined by~\eqref{pqgamma}. Moreover, the bounds on $p$ are equivalent to $2<q\leq 2\,\big(3+4\,(\alpha+1)^2+3+4\,\alpha^2\big)/\big(3\,(3+4\,\alpha^2)-3-4\,(\alpha+1)^2\big)$. One checks that $2\,\big(3+4(\alpha+1)^2+3+4\,\alpha^2\big)/\big(3\,(3+4\,\alpha^2)-3-4\,(\alpha+1)^2\big)<4-6/\alpha=\mathsf q(\alpha)$, so we may assume that $\mathcal B_{\alpha,q}=2\,\mathsf m(\alpha)$. By~\eqref{pqgamma}, Assumption~\eqref{eq:symm2proofass} is equivalent to
\be{betastar}
\(\alpha-\tfrac12\)^2\le2\,\frac{\mathsf m(\alpha)}{q-2}=\tfrac12\,\mathsf m(\alpha)\,\frac{6-p}{p-2}\,,
\ee
which in turn is equivalent to Assumption~\eqref{SymZone1}. See Fig.~\ref{Fig:SCKNlog-Detail-1}.

Now assume that $\alpha\in(0,1/2)\cup(1/2,1)$ and consequently that $2<p\leq\min\big\{ (6+4\,\alpha)/(1+2\,\alpha), 2\,\big(7 - 10\,\alpha + 4\,\alpha^2\big)/\big(3-6\,\alpha+4\,\alpha^2\big)\big\}$ by~\eqref{SymZone2}. Again this implies $p<6$, so $q$ is well defined by~\eqref{pqgamma}. A computation shows that the assumption $p\leq (6+4\,\alpha)/(1+2\,\alpha)$ is equivalent to the assumption $q\leq\mathsf q(\alpha)$, so we may again assume that $\mathcal B_{\alpha,q}=2\,\mathsf m(\alpha)$. The resulting form~\eqref{betastar} of~\eqref{eq:symm2proofass} turns out to be equivalent to the assumption $p\leq 2\,\big(7 - 10\,\alpha + 4\,\alpha^2\big)/\big(3-6\,\alpha+4\,\alpha^2\big)$. Thus, we have shown that Assumption~\eqref{SymZone2} implies Assumption~\eqref{eq:symm2proofass}. See Fig.~\ref{Fig:SCKNlog-Detail-1}. This ends the first part of the proof of Theorem~\ref{Thm:symmetry1}.

\subsubsection*{$\rhd$ Fifth step} Let us prove the stronger result that under the assumptions of Theorem~\ref{Thm:symmetry1} every minimizer is symmetric. In Step~2 we applied Theorem~\ref{thm:BecknerGenSpinor}. The equality statement there implies that $\phi(s,\omega)$ must be a constant spinor for almost all $s$. So, for a.e.~$s\in\R$ there is a unitary $2\times 2$-matrix $U(s)$ and a non-negative number $f(s)$ such that $\phi(s,\cdot) = f(s)\,U(s)\,\chi_0$. In Step 1, we applied Lemma~\ref{Lem:Keller}. More precisely, we applied a corollary of this lemma for an operator acting on $\C^2$-valued functions. This extension from $\C$ to $\C^2$-valued functions implies that there is a unitary $2\times 2$-matrix $U_0$ such that $U(s) = U_0$ for a.e.~$s\in\R$.

The equality statement of Lemma~\ref{Lem:Keller} now implies that there are $B>0$ and $C\in\R$ such that
$$
f(s)^{p-2} = |\phi(s,\omega)|^{p-2} = B^2\,V_0\big(B(s-C)\big)
\quad\text{and}\quad
f(s) = A\,u_0\big(B(s-C)\big)
$$
for some nonnegative constant $A$. Here $V_0$ and $u_0$ are defined in Lemma~\ref{Lem:Keller} and $\gamma$ is related to $p$ by~\eqref{pqgamma}. By translation invariance, we may assume $C=0$. By inserting $\phi=f\,U_0\,\chi_0$ into the Euler--Lagrange equation, we find that necessarily
$$
B = \tfrac12\,(p-2)\left|\tfrac12 - \alpha\right|\,.
$$
Thus, we have shown that $f=u_\star$ given by~\eqref{phistar}. This proves the uniqueness of minimizers up to translations with respect to $s$ and applications of a unitary transformation in $\C^2$, while $A$ is determined by~\eqref{eq:phistarnormalization}.
\qed

\begin{figure}
\floatbox[{\capbeside\thisfloatsetup{capbesideposition={right,center},capbesidewidth=10cm}}]{figure}[\FBwidth]
{\caption{\small For any $\alpha\in[-1/2,1)$, the range of symmetry in Theorem~\ref{Thm:symmetry1} is represented in \greenbw. The dotted curve corresponds to the condition $q=\mathsf q(\alpha)$, rewritten in terms of $p$ using~\eqref{pqgamma}. The plain curves correspond to the condition that $\(\alpha-1/2\)^2=\tfrac12\,\mathsf m\,(6-p)/(p-2)$
with either $\mathsf m=1-\alpha$ or $\mathsf m=1+2\,\alpha$ and determine~\eqref{SymZone1}-\eqref{SymZone2}: symmetry occurs below these curves.
}\label{Fig:SCKNlog-Detail-1}}
{\includegraphics[width=4.5cm]{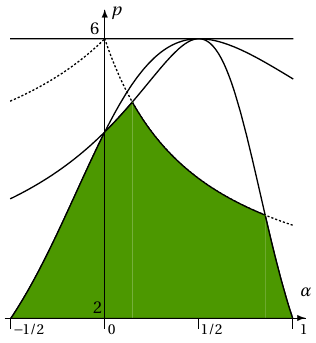}}
\end{figure}

\subsection{Extension}
The range of symmetry in Theorem~\ref{Thm:symmetry1} is limited to two subintervals of $(-1/2,1)\ni\alpha$. Using the transformation $\alpha\mapsto-\,(\alpha+1)$ as in~\eqref{logarithmicalt}, we can extend the result to subintervals of $(-2,-1/2)$ and prove the following result.
\begin{corollary}\label{Cor:symmetry2} If one of the following conditions is satisfied,
\begin{align*}
&-\,1\le\alpha<-\,\tfrac12\quad\mbox{and}\quad2< p\le2\,\frac{3+4\,\alpha^2}{3+4\,(\alpha+1)^2}\,,\\
&\alpha \in \big(-2,-\tfrac32\big)\cup\big(-\tfrac32,-1\big)\quad\mbox{and}\quad2< p\le \min\left\{\frac{4\,\alpha-2}{2\,\alpha+1},\; 2\,\frac{21+18\alpha+4\,\alpha^2}{13+14\,\alpha+4\,\alpha^2} \right\}\,,
\end{align*}
then $C_{\alpha,p}=C^\star_{\alpha,p}$ and any optimizer for Inequality~\eqref{logarithmicSCKN} is symmetric of type $\mathcal H_{-2}$.
\end{corollary}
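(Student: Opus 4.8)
The plan is to obtain Corollary~\ref{Cor:symmetry2} directly from Theorem~\ref{Thm:symmetry1} by exploiting the symmetry $(\alpha,p)\mapsto\big(-(\alpha+1),p\big)$ already used in~\eqref{logarithmicalt}. Write $\alpha':=-(\alpha+1)$. First I would recall that the involution $\phi\mapsto(\sigma\cdot\omega)\,\phi$ (legitimate since $(\sigma\cdot\omega)^2=1$) transforms Inequality~\eqref{logarithmicSCKN} with parameter $\alpha$ into the same inequality with parameter $\alpha'$ and the same exponent $p$; hence $C_{\alpha,p}=C_{\alpha',p}$, and $\phi$ is an optimizer for the problem with parameter $\alpha$ if and only if $(\sigma\cdot\omega)\,\phi$ is an optimizer for the problem with parameter $\alpha'$. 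Moreover, since $(\sigma\cdot\omega)\,\big(u(s)\,\chi_0\big)=u(s)\,(\sigma\cdot\omega)\,\chi_0$, this involution sends symmetric spinors of type $\mathcal H_0$ to symmetric spinors of type $\mathcal H_{-2}$ and conversely, so it also yields $C^\star_{\alpha,p}=C^\star_{\alpha',p}$ together with a bijection between symmetric optimizers for the two problems.

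Next I would check that the two conditions on $(\alpha,p)$ in the statement of Corollary~\ref{Cor:symmetry2} are precisely the images, under $\alpha\mapsto\alpha'$, of Conditions~\eqref{SymZone1} and~\eqref{SymZone2} of Theorem~\ref{Thm:symmetry1}. On the level of the $\alpha$-range, $\alpha'\in(-\tfrac12,0]$ is equivalent to $\alpha\in[-1,-\tfrac12)$, while $\alpha'\in(0,\tfrac12)\cup(\tfrac12,1)$ is equivalent to $\alpha\in(-2,-\tfrac32)\cup(-\tfrac32,-1)$. For the bound on $p$, using $\alpha'=-\alpha-1$, so that $\alpha'^2=(\alpha+1)^2$ and $(\alpha'+1)^2=\alpha^2$, one verifies
\[
2\,\frac{3+4\,(\alpha'+1)^2}{3+4\,\alpha'^2}=2\,\frac{3+4\,\alpha^2}{3+4\,(\alpha+1)^2}\,,\qquad
2\,\frac{3+2\,\alpha'}{1+2\,\alpha'}=\frac{4\,\alpha-2}{2\,\alpha+1}\,,\qquad
2\,\frac{7-10\,\alpha'+4\,\alpha'^2}{3-6\,\alpha'+4\,\alpha'^2}=2\,\frac{21+18\,\alpha+4\,\alpha^2}{13+14\,\alpha+4\,\alpha^2}\,,
\]
which turns~\eqref{SymZone1}--\eqref{SymZone2} into the two displayed conditions of the corollary (in particular the excluded value $\alpha'=\tfrac12$ corresponds to the excluded value $\alpha=-\tfrac32$).

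Finally I would conclude: if $(\alpha,p)$ satisfies one of the conditions of Corollary~\ref{Cor:symmetry2}, then $(\alpha',p)$ satisfies the corresponding condition of Theorem~\ref{Thm:symmetry1}, so $C_{\alpha',p}=C^\star_{\alpha',p}$ and every optimizer for~\eqref{logarithmicSCKN} with parameter $\alpha'$ is symmetric of type $\mathcal H_0$. Applying the involution $\phi\mapsto(\sigma\cdot\omega)\,\phi$ then gives $C_{\alpha,p}=C^\star_{\alpha,p}$, and every optimizer with parameter $\alpha$ is symmetric of type $\mathcal H_{-2}$, as claimed. There is no substantive obstacle: the proof is a transcription of Theorem~\ref{Thm:symmetry1}, and the only points requiring care are the bookkeeping of the algebraic identities above and the verification that $\phi\mapsto(\sigma\cdot\omega)\,\phi$ indeed preserves the class of symmetric spinors while interchanging the types $\mathcal H_0$ and $\mathcal H_{-2}$.
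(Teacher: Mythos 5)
Your proposal is correct and follows exactly the route the paper intends (and states in one sentence just before the corollary): apply the involution $\phi\mapsto(\sigma\cdot\omega)\,\phi$, which via~\eqref{logarithmicalt} identifies the problem at $\alpha$ with the problem at $\alpha'=-(\alpha+1)$ and swaps symmetric types $\mathcal H_0\leftrightarrow\mathcal H_{-2}$, then translate Conditions~\eqref{SymZone1}--\eqref{SymZone2} through the algebraic identities you check. Your bookkeeping of the three rational expressions and of the $\alpha$-ranges (including the excluded point $\alpha=-\tfrac32$ corresponding to $\alpha'=\tfrac12$) is accurate.
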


\section{Instability of symmetric solutions and symmetry breaking}\label{Sec:symmetrybreaking}

The aim of this section is to find regions in the set $(\R\setminus\Lambda)\times(2,6)\ni(\alpha,p)$ for which there is symmetry breaking, that is, $C_{\alpha,p}<C^\star_{\alpha,p}$. We argue as follows. If $\phi_\star\in\mathrm H^1(\Sp,\C^2)$ is a symmetric optimizer for~\eqref{logarithmicSCKN} restricted to symmetric functions, normalized such that $\nrmc{\phi_\star}p^{p-2}=C^\star_{\alpha,p}$, let us consider the quadratic form
\[
\mathcal Q[\varphi]:=\lim_{\varepsilon\to0}\frac1{\varepsilon^2}\(\mathcal F[\phi_\star+\varepsilon\,\varphi]-\mathcal F[\phi_\star]\)
\]
where $\mathcal F[\phi]:=\nrmc{\partial_s\phi}2^2+\big\|\big(\sigma\cdot L+\tfrac12-\alpha\big)\,\phi\|_{\mathrm L^2(\R\times\Sp)}^2-C^\star_{\alpha,p}\,\nrmc{\phi}p^2$. If we can find a function $\varphi\in\mathrm H^1(\Sp,\C^2)$ such that $\mathcal Q[\varphi]<0$, then $\mathcal F[\phi_\star+\varepsilon\,\varphi]$ is also negative if we take $\varepsilon>0$ small enough, and then we know that $C_{\alpha,p}<C^\star_{\alpha,p}$. We shall say that $\phi_\star$ is \emph{linearly unstable} if we can find a function $\varphi$ such that $\mathcal Q[\varphi]<0$. 

Let us start by computing $\mathcal Q$.
\begin{lemma} If $(\alpha,p)\in(\R\setminus\Lambda)\times(2,6)$, then
\begin{multline*}
\mathcal Q[\varphi] =\nrmc{\partial_s\varphi}2^2+\big\|\big(\sigma\cdot L+\tfrac12-\alpha\big)\,\varphi\|_{\mathrm L^2(\R\times\Sp)}^2\\
\hspace*{3cm}- \ic{|\phi_\star|^{p-2}\,|\varphi|^2}-(p-2) \ic{|\phi_\star|^{p-4}\,\big|\mathrm{Re}\lrangle{\phi_\star}\varphi\big|^2}\\
+(p-2)\; \frac{\(\ic{|\phi_\star|^{p-2}\,\mathrm{Re}\lrangle{\phi_\star}\varphi}\)^2}{\nrmc{\phi_\star}p^p}\,.
\end{multline*}
\end{lemma}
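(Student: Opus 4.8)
The plan is to Taylor-expand $\mathcal F[\phi_\star+\varepsilon\,\varphi]$ to second order in $\varepsilon$ and to read off $\mathcal Q[\varphi]$ as the coefficient of $\varepsilon^2$. The two terms $\nrmc{\partial_s\phi}2^2$ and $\nrmc{(\sigma\cdot L+\tfrac12-\alpha)\,\phi}2^2$ of $\mathcal F$ are quadratic forms in $\phi$, so they contribute an affine function of $\varepsilon$ (whose $\varepsilon^1$-part is the polarization) together with the exact $\varepsilon^2$-term $\nrmc{\partial_s\varphi}2^2+\nrmc{(\sigma\cdot L+\tfrac12-\alpha)\,\varphi}2^2$, which is precisely the first line of the claimed formula. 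It therefore only remains to expand the nonlinear contribution $-\,C^\star_{\alpha,p}\,N[\phi_\star+\varepsilon\,\varphi]$ with $N[\phi]:=\nrmc\phi p^2$. One should note at the outset that the \emph{total} $\varepsilon^1$-coefficient must vanish for the limit defining $\mathcal Q$ to exist; this holds because $\phi_\star$, being a symmetric optimizer normalized by $\nrmc{\phi_\star}p^{p-2}=C^\star_{\alpha,p}$, is a critical point of $\mathcal F$, equivalently it solves the Euler--Lagrange equation~\eqref{EL} (using $\sigma\cdot L$ self-adjoint and $C^\star_{\alpha,p}\,\nrmc{\phi_\star}p^{2-p}=1$). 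I would spell this step out in the write-up.

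For the nonlinear term, set $g(\varepsilon):=\ic{|\phi_\star+\varepsilon\,\varphi|^p}$, so that $N[\phi_\star+\varepsilon\,\varphi]=g(\varepsilon)^{2/p}$, and work pointwise. Using $|\phi_\star+\varepsilon\,\varphi|^2=|\phi_\star|^2+2\,\varepsilon\,\Re\lrangle{\phi_\star}\varphi+\varepsilon^2\,|\varphi|^2$, a nonnegative quadratic polynomial in $\varepsilon$, one differentiates $\varepsilon\mapsto\big(|\phi_\star+\varepsilon\,\varphi(x)|^2\big)^{p/2}$ twice — legitimate for a.e.\ $x$ since $p>2$ — to get
\[
\frac{d^2}{d\varepsilon^2}\,|\phi_\star+\varepsilon\,\varphi|^p = p\,(p-2)\,|\phi_\star+\varepsilon\,\varphi|^{p-4}\,\big(\Re\lrangle{\phi_\star+\varepsilon\,\varphi}\varphi\big)^2+p\,|\phi_\star+\varepsilon\,\varphi|^{p-2}\,|\varphi|^2\,.
\]
The pointwise estimate $|\psi|^{p-4}\,\big(\Re\lrangle\psi\varphi\big)^2\le|\psi|^{p-2}\,|\varphi|^2\le C\,\big(|\phi_\star|^{p-2}+|\varphi|^{p-2}\big)\,|\varphi|^2$ for $\psi=\phi_\star+\varepsilon\,\varphi$ and $|\varepsilon|\le1$, together with $\phi_\star\in\mathrm L^\infty$ (so $|\phi_\star|^{p-2}$ is bounded, as $p>2$; recall $\phi_\star$ is explicit and exponentially decaying, cf.~\eqref{ustar}) and the Sobolev embedding $\mathrm H^1(\R\times\Sp)\hookrightarrow\mathrm L^p(\R\times\Sp)$ valid for $p\le6$, provides an $\varepsilon$-uniform $\mathrm L^1(\R\times\Sp)$ majorant for the first and second $\varepsilon$-derivatives of the integrand. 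This licenses differentiating $g$ twice under the integral near $\varepsilon=0$, and at $\varepsilon=0$ one obtains $g(0)=\nrmc{\phi_\star}p^p$, $g'(0)=p\ic{|\phi_\star|^{p-2}\,\Re\lrangle{\phi_\star}\varphi}$ and $g''(0)=p\,(p-2)\ic{|\phi_\star|^{p-4}\,\big(\Re\lrangle{\phi_\star}\varphi\big)^2}+p\ic{|\phi_\star|^{p-2}\,|\varphi|^2}$.

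It then remains to expand $g(\varepsilon)^{2/p}$: its $\varepsilon^2$-coefficient equals $\tfrac1p\,g(0)^{2/p-1}\,g''(0)-\tfrac{p-2}{p^2}\,g(0)^{2/p-2}\,g'(0)^2$. Multiplying by $-\,C^\star_{\alpha,p}$ and simplifying with the normalization $\nrmc{\phi_\star}p^{p-2}=C^\star_{\alpha,p}$, which gives $C^\star_{\alpha,p}\,g(0)^{2/p-1}=1$ and $C^\star_{\alpha,p}\,g(0)^{2/p-2}=\nrmc{\phi_\star}p^{-p}$, one recovers after collecting terms exactly the last two lines of the asserted identity. The main — and essentially the only — obstacle is the passage to the limit under the integral for the $\mathrm L^p$-term: beyond producing the uniform majorant, one should check that the pointwise second-derivative formula stays valid where $\phi_\star+\varepsilon\,\varphi$ vanishes, since there $|\psi|^{p-4}$ is singular if $p<4$; but the accompanying factor $\big(\Re\lrangle\psi\varphi\big)^2\le|\psi|^2\,|\varphi|^2$ tames it and the integrand extends continuously by $0$. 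A slightly cleaner route would be to first establish the identity for $\varphi\in C_c^\infty(\R\times\Sp,\C^2)$, where every manipulation is elementary, and then pass to general $\varphi\in\mathrm H^1(\R\times\Sp,\C^2)$ by density, using that both sides are continuous in the $\mathrm H^1$-topology because the weights $|\phi_\star|^{p-2}$ are bounded.
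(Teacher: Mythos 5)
Your proposal is correct and follows essentially the same route as the paper: a second-order Taylor expansion of $\mathcal F[\phi_\star+\varepsilon\,\varphi]$, reading off the $\varepsilon^2$-coefficient, and simplifying with the normalization $\nrmc{\phi_\star}p^{p-2}=C^\star_{\alpha,p}$. The extra care you take — the dominated-convergence majorant justifying differentiation under the integral sign for $p\in(2,6)$, the continuous extension of the integrand through zeros of $\phi_\star+\varepsilon\,\varphi$ when $p<4$, and the observation that the $\varepsilon^1$-term cancels because $\phi_\star$ solves the Euler--Lagrange equation~\eqref{EL} — is exactly the "care" the paper alludes to but leaves implicit, so your write-up is, if anything, a bit more complete than the original.
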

\begin{proof} The proof is elementary but requires some care. Using
\[
|\phi_\star+\varepsilon\,\varphi|^2=|\phi_\star|^2+2\,\varepsilon\,\mathrm{Re}\lrangle\varphi{\phi_\star}+\varepsilon^2\,|\varphi|^2\,,
\]
we perform a Taylor expansion at order two in $\varepsilon$ and obtain
\begin{align*}
|\phi_\star+\varepsilon\,\varphi|^p&=|\phi_\star|^p\(1+2\,\varepsilon\,\frac{\mathrm{Re}\lrangle\varphi{\phi_\star}}{|\phi_\star|^2}+\varepsilon^2\,\frac{|\varphi|^2}{|\phi_\star|^2}\)^\frac p2\\
&=|\phi_\star|^p+p\,\varepsilon\,|\phi_\star|^{p-2}\,\mathrm{Re}\lrangle\varphi{\phi_\star}+\frac p2\,\varepsilon^2\,|\phi_\star|^{p-2}\,|\varphi|^2\\
&\hspace*{4cm}+\frac12\,p\,(p-2)\,\varepsilon^2\,|\phi_\star|^{p-4}\(\mathrm{Re}\lrangle\varphi{\phi_\star}\)^2+o\(\varepsilon^2\)\,.
\end{align*}
After integration, we find that
\begin{multline*}
\frac{\nrmc{\phi_\star+\varepsilon\,\varphi}p^2}{\nrmc{\phi_\star}p^2}-1-2\,\varepsilon\,\frac{\ic{|\phi_\star|^{p-2}\,\mathrm{Re}\lrangle\varphi{\phi_\star}}}{\nrmc{\phi_\star}p^{p-2}}\\
=\varepsilon^2\,\frac{\ic{|\phi_\star|^{p-2}\,|\varphi|^2}}{\nrmc{\phi_\star}p^p}+(p-2)\,\varepsilon^2\,\frac{\ic{|\phi_\star|^{p-4}\(\mathrm{Re}\lrangle\varphi{\phi_\star}\)^2}}{\nrmc{\phi_\star}p^p}\\
-\,(p-2)\,\varepsilon^2\,\frac{\(\ic{|\phi_\star|^{p-2}\,\mathrm{Re}\lrangle\varphi{\phi_\star}}\)^2}{\nrmc{\phi_\star}p^{2\,p}}+o\(\varepsilon^2\)\,.
\end{multline*}
Recalling the normalization $\nrmc{\phi_\star}p^{p-2}=C^\star_{\alpha,p}$ completes the proof. \end{proof}

\medskip Let us assume that $\alpha>-1/2$ so that $\phi_\star(s,\omega)=u_\star(s)\,\chi_0$ with a constant spinor $\chi_0\in\C^2$ where $u_\star$ is defined by~\eqref{ustar}. With no loss of generality, we choose $\chi_0=\begin{psmallmatrix}1\\0\end{psmallmatrix}$. Also, after a translation, we may assume that the function $u_\star$ is given by~\eqref{ustar}. An \emph{ansatz} determines a range of the parameters $(\alpha,p)$ for which $\mathcal Q$ takes negative values. Let us consider the set of spinors~$\mathcal S_k^m$ such that
\[
\varphi(s,\omega):=w(s)\,\chi_k^m(\omega)\quad\forall\,(s,\omega)\in\R\times\Sp\,,
\]
where $w\in\mathrm H^1(\R)$ is real valued and such that $\nrmr w2=1$, $k\in\Z\setminus\{-1,0\}$ and $m\in M_k$ where $M_k:=\big\{ k+3/2, k+1/2,\dots, -k-3/2\big\}$ if $\,k\le -2$, and $M_k:=\big\{-k-1/2, -k+1/2,\dots, k+1/2\big\}$ if $k\ge 1$. See Lemma~\ref{basis} in Appendix~\ref{App5} for the definition of $\chi_k^m(\omega)$. With this notation,
\[
\mathcal Q[\varphi]=\ir{\(|w'(s)|^2+\(k-\alpha+\tfrac12\)^2\,|w(s)|^2-\frac{A_k^m\,|w(s)|^2}{\big(\cosh(B\,s)\big)^2}\)}\quad\forall\,\varphi\in\mathcal S_k^m
\]
because $\int_\Sp \lrangle{\chi_0}{\chi_k^m}\,d\omega=0$. Here $A=\frac p2\,|\alpha-1/2|^2$, $B=\frac12\,(p-2)\,|\alpha-1/2|$,
\[
A_k^m:=\(1+(p-2)\,\delta_k^m\)A\quad\mbox{and}\quad\delta_k^m:=\int_\Sp\big|\mathrm{Re}\lrangle{\chi_0}{\chi_k^m}\big|^2\,d\omega\,.
\]
For any $k\in\Z\setminus\{-1,0\}$ and $m\in M_k$, minimizing $\mathcal Q[\varphi]$ with respect to $\varphi\in\mathcal S_k^m$ determines $\varphi_k^m(s,\omega)=w_k^m(s)\,\chi_k^m(\omega)$ where $w_k^m>0$ is uniquely defined according to Lemma~\ref{Lem:Keller}. Taking into account the scaling $s\mapsto B\,s$, we obtain $\mathcal Q\big[\varphi_k^m\big]=(k-\alpha+1/2)^2 - B^2\,(\gamma-1/2)^2$ with~$\gamma$ such that $\gamma^2-1/4=A_k^m/B^2$.
\begin{lemma} Assume that $(\alpha,p)\in(\R\setminus\Lambda)\times(2,6)$. With the above notation, we obtain
\[
\min_{\varphi\in\mathcal S_k^m}\mathcal Q[\varphi]=\mathcal Q\big[\varphi_k^m\big]=\(k-\alpha+\tfrac12\)^2-\frac14\(\sqrt{4\,A_k^m+B^2}-B\)^2=:\frac14\,\mathsf q[k,m;\alpha,p]\,.
\]
\end{lemma}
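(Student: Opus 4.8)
The plan is to reduce the minimisation of $\mathcal Q$ over $\mathcal S_k^m$ to the one-dimensional Schr\"odinger eigenvalue problem already isolated just above the statement, and then to invoke Lemma~\ref{Lem:Keller}. Concretely, for $\varphi=w\,\chi_k^m\in\mathcal S_k^m$ with $\nrmr w2=1$ the reduction carried out above gives
\[
\mathcal Q[\varphi] = \big(k-\alpha+\tfrac12\big)^2 + \ir{\Big(|w'(s)|^2 - \tfrac{A_k^m}{(\cosh(B\,s))^2}\,|w(s)|^2\Big)}\,,
\]
so that minimising over such $w$ with the $\mathrm L^2$-constraint amounts to computing the bottom of the spectrum of the operator $-\,\tfrac{d^2}{ds^2}-A_k^m/(\cosh(B\,s))^2$; in the notation of Lemma~\ref{Lem:Keller} this bottom equals $-\,\lambda_1\big(A_k^m/(\cosh(B\,\cdot))^2\big)$ and is a genuine, simple eigenvalue (the ground state $(\cosh(B\,s))^{1/2-\gamma}$ is in $\mathrm L^2(\R)$). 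The first point to record is therefore that $\min_{\varphi\in\mathcal S_k^m}\mathcal Q[\varphi]=\big(k-\alpha+\tfrac12\big)^2-\lambda_1\big(A_k^m/(\cosh(B\,\cdot))^2\big)$ and that the minimiser is unique up to sign and normalisation, which is precisely the simplicity of the ground state asserted in Lemma~\ref{Lem:Keller}.

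Next I would bring the potential to the normal form of Lemma~\ref{Lem:Keller} via the rescaling $s\mapsto B\,s$. Since $p>2$, $\delta_k^m\ge0$, and since $\alpha\notin\Lambda$ forces $\alpha\neq\tfrac12$ and hence $B>0$, we have $A_k^m=\big(1+(p-2)\,\delta_k^m\big)\,A>0$; thus one may define $\gamma>1/2$ by $\gamma^2-\tfrac14=A_k^m/B^2$, and then $A_k^m/(\cosh(B\,s))^2=B^2\,V_0(B\,s)$ with $V_0$ as in Lemma~\ref{Lem:Keller}. That lemma gives $\lambda_1(V_0)=(\gamma-1/2)^2$ with one-dimensional eigenspace spanned by the positive function $u_0$. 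Undoing the scaling — the eigenvalue is multiplied by $B^2$ while the $\mathrm L^2$-normalised eigenfunction acquires a factor $B^{1/2}$ — one obtains $\lambda_1\big(A_k^m/(\cosh(B\,\cdot))^2\big)=B^2\,(\gamma-1/2)^2$, attained only, up to sign, by $w_k^m(s):=B^{1/2}\,u_0(B\,s)$. This both identifies the minimiser as $\varphi_k^m=w_k^m\,\chi_k^m$ and yields $\min_{\varphi\in\mathcal S_k^m}\mathcal Q[\varphi]=\mathcal Q[\varphi_k^m]=\big(k-\alpha+\tfrac12\big)^2-B^2\,(\gamma-1/2)^2$.

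It then remains to put $B^2\,(\gamma-1/2)^2$ in closed form, which is routine: from $\gamma^2-\tfrac14=A_k^m/B^2$ one solves $\gamma=\tfrac1{2\,B}\sqrt{4\,A_k^m+B^2}$, whence $B\,(\gamma-\tfrac12)=\tfrac12\big(\sqrt{4\,A_k^m+B^2}-B\big)$ and, squaring, $B^2\,(\gamma-1/2)^2=\tfrac14\big(\sqrt{4\,A_k^m+B^2}-B\big)^2$. This gives the asserted value $\tfrac14\,\mathsf q[k,m;\alpha,p]$. There is no serious obstacle in this argument; the only steps that demand a little care are the bookkeeping of the $B^{1/2}$ and $B^2$ factors under the rescaling and the verification that $\alpha\neq1/2$ — which is built into the hypothesis $\alpha\notin\Lambda$ — indeed ensures $A_k^m>0$, so that the admissibility condition $\gamma>1/2$ of Lemma~\ref{Lem:Keller} is met.
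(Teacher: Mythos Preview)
Your proposal is correct and follows essentially the same route as the paper: the paper reduces the minimisation over $\mathcal S_k^m$ to the ground-state problem for the P\"oschl--Teller operator $-\,\tfrac{d^2}{ds^2}-A_k^m/(\cosh(Bs))^2$, invokes Lemma~\ref{Lem:Keller} after the rescaling $s\mapsto Bs$ with $\gamma$ determined by $\gamma^2-\tfrac14=A_k^m/B^2$, and then records the closed form $\tfrac14\big(\sqrt{4A_k^m+B^2}-B\big)^2$ (which also appears separately as a corollary in Appendix~\ref{App00}). Your additional bookkeeping about $A_k^m>0$, $B>0$ and the normalisation factors is accurate and fills in details the paper leaves implicit.
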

The next step is to compute these coefficients.
\begin{lemma} For any $k\in\Z\setminus\{-1,0\}$, we have
\[
\delta_k^{1/2}=\frac{k+1}{2\,k+1}\quad\mbox{and}\quad\delta_k^m=\frac 12\,\frac{k+m+1/2}{2\,k+1}\quad\mbox{if}\quad m\neq\frac12\,.
\]
\end{lemma}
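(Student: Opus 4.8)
The plan is to use the explicit description of the spinor spherical harmonics $\chi_k^m$ from Lemma~\ref{basis} in Appendix~\ref{App5} and then to reduce the computation of $\delta_k^m$ to an elementary integral over the azimuthal angle.

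Since $\chi_0=(1,0)^{\mathsf T}$, one has $\lrangle{\chi_0}{\chi_k^m}=(\chi_k^m)_1$, the first component of the angular spinor $\chi_k^m$. By the Clebsch--Gordan form of $\chi_k^m$ recalled in Appendix~\ref{App5}, this component is a scalar multiple of a single scalar spherical harmonic: writing $\ell=\ell(k)$ for the non-negative integer such that $\ell\,(\ell+1)$ is the eigenvalue of $L^2$ on $\mathcal H_k$, i.e. $\ell=k$ if $k\ge1$ and $\ell=-k-1$ if $k\le-2$, one gets
\[
(\chi_k^m)_1=\epsilon_k\,\sqrt{\tfrac{k+m+1/2}{2\,k+1}}\;Y_\ell^{\,m-1/2}\,,\qquad \epsilon_k\in\{-1,+1\}\,,
\]
where $Y_\ell^\mu$ is the scalar spherical harmonic of degree $\ell$ and azimuthal index $\mu\in\Z$, normalized in $\mathrm L^2(\Sp,d\omega)$. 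A point that has to be checked here is that the two branches $k\ge1$ and $k\le-2$ yield the \emph{same} coefficient: in the case $k\le-2$ the relevant Clebsch--Gordan coefficient for the first component is $\sqrt{(\ell-m+1/2)/(2\,\ell+1)}$, and substituting $\ell=-k-1$ turns this into $\sqrt{(k+m+1/2)/(2\,k+1)}$, as claimed.

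Next I would compute the angular average $\is{\big|\mathrm{Re}\,Y_\ell^\mu\big|^2}$. Writing $Y_\ell^\mu=N_{\ell\mu}\,P_\ell^{|\mu|}(\cos\theta)\,e^{i\,\mu\,\varphi}$ with $N_{\ell\mu}$ real, we have $\mathrm{Re}\,Y_\ell^\mu=N_{\ell\mu}\,P_\ell^{|\mu|}(\cos\theta)\,\cos(\mu\,\varphi)$; separating the $\theta$- and $\varphi$-integrals, using $\frac1{2\,\pi}\int_0^{2\,\pi}\cos^2(\mu\,\varphi)\,d\varphi=1$ if $\mu=0$ and $=\tfrac12$ if $\mu\neq0$, and recalling $\nrms{Y_\ell^\mu}2=1$, we obtain $\is{|\mathrm{Re}\,Y_\ell^\mu|^2}=1$ if $\mu=0$ and $\is{|\mathrm{Re}\,Y_\ell^\mu|^2}=\tfrac12$ if $\mu\neq0$. (Equivalently, one may use $|\mathrm{Re}\,z|^2=\tfrac12\,|z|^2+\tfrac12\,\mathrm{Re}(z^2)$ together with $\is{(Y_\ell^\mu)^2}=0$ for $\mu\neq0$ and $\is{(Y_\ell^0)^2}=1$.)

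Finally I would combine the two steps. Since $\mu=m-1/2$, we have $\mu=0$ exactly when $m=1/2$, hence
\[
\delta_k^m=\is{\big|\mathrm{Re}\lrangle{\chi_0}{\chi_k^m}\big|^2}=\frac{k+m+1/2}{2\,k+1}\,\is{\big|\mathrm{Re}\,Y_\ell^{\,m-1/2}\big|^2}=
\begin{cases}\dfrac{k+1}{2\,k+1}&\text{if }m=\tfrac12\,,\\[10pt]\dfrac12\,\dfrac{k+m+1/2}{2\,k+1}&\text{if }m\neq\tfrac12\,,\end{cases}
\]
using $k+\tfrac12+\tfrac12=k+1$ in the first line. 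The main (and essentially only) obstacle is the bookkeeping in the second paragraph: matching the sign and ordering conventions of Appendix~\ref{App5}, correctly identifying $\ell(k)$ and the azimuthal index $m-\tfrac12$ of the first component in both branches of $k$, and keeping track of the probability-measure normalization of $d\omega$ (which affects $N_{\ell\mu}$ but not the ratio $\is{|\mathrm{Re}\,Y_\ell^\mu|^2}$). Everything else reduces to a one-dimensional trigonometric integral.
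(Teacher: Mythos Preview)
Your proof is correct and follows essentially the same approach as the paper: identify $\lrangle{\chi_0}{\chi_k^m}$ as a scalar multiple of a single spherical harmonic $Y_\ell^{\,m-1/2}$ via the explicit formulas of Appendix~\ref{App5}, then compute $\is{|\mathrm{Re}\,Y_\ell^\mu|^2}$. The paper does the latter via the identity $(\mathrm{Re}\,z)^2=\tfrac12\,|z|^2+\tfrac12\,\mathrm{Re}(z^2)$ together with $\overline{Y_\ell^\mu}=(-1)^\mu\,Y_\ell^{-\mu}$ and orthonormality---precisely the alternative you mention in parentheses---while your primary route integrates $\cos^2(\mu\varphi)$ directly; both are equivalent and equally short.
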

\begin{proof} Using the explicit formulas for $\chi_k^m$ in Appendix~\ref{App5}, it follows that
\begin{align*}
&\lrangle{\chi_0}{\chi_k^m}=\sqrt{4\,\pi\,\frac{k+m+1/2}{2\,k+1}}\,Y_\ell^{m-1/2}
\quad\text{where}\ \ell = k \ \text{if}\ k\geq 0 \,,\quad \ell = -k-1 \ \text{if}\ k\leq -2\,.
\end{align*}
We note that, in view of~\eqref{eq:harmonicsconj},
\begin{align*}
\big(\Re Y^{m-1/2}_\ell\big)^2 & = \frac14 \left( \big(Y^{m-1/2}_\ell\big)^2 + 2\,\overline{Y^{m-1/2}_\ell} \,Y^{m-1/2}_\ell + \(\overline{Y^{m-1/2}_\ell}\)^2 \) \\
& = \frac12 \left( (-1)^{m-1/2} \Re \left( \overline{Y^{-m+1/2}_\ell} \,Y^{m-1/2}_\ell \right) + \big|Y^{m-1/2}_\ell\big|^2 \right).
\end{align*}
Using the orthonormality of the spherical harmonics, we obtain the claimed result.
\end{proof}

With these preliminaries in hand, we are now ready to establish regions of symmetry breaking, which are values of $(\alpha,p)$ for which $\mathsf q[k,m;\alpha,p]<0$.

\noindent$\rhd$ If $\alpha\in(-1/2,0)$, then
\[
\mathsf q[1,1/2;\alpha,p]=(3-2\,\alpha)^2-\frac{(1-2\,\alpha)^2}{48}\,\(\sqrt{19\,p^2-20\,p+12}-\sqrt3\,(p-2)\)^2
\]
takes negative values if and only if (see Fig.~\ref{Fig:SCKNlog-Detail-2})
\[
p> p_1(\alpha):=\frac{5-4\,\alpha+\sqrt{8\,\alpha\,(2\,\alpha-11)+97}}{2\,(1-2\,\alpha)}\quad\mbox{and}\quad\alpha<\frac{1-\sqrt3}4\approx-\,0.183013\,.
\]

\noindent$\rhd$ If $\alpha\in(-1/2,0)$, then
\[
\mathsf q[-2,1/2;\alpha,p]=(3+2\,\alpha)^2-\frac{(1-2\,\alpha)^2}{48}\,\(\sqrt{11\,p^2-4\,p+12}-\sqrt3\,(p-2)\)^2
\]
takes negative values if and only if (see Fig.~\ref{Fig:SCKNlog-Detail-2})
\[
p> p_2(\alpha):=2\,\frac{2\,(1+\alpha)+\sqrt{16\,\alpha^2+32\,\alpha+13}}{1-2\,\alpha}\quad\mbox{and}\quad\alpha<\frac{1-\sqrt2}2\approx-\,0.207107\,.
\]

\noindent$\rhd$ If $\alpha\in(1/2,+\infty)$ and $\alpha\not\in\Lambda$, then
\[
\mathsf q[1,1/2;\alpha,p]=(3-2\,\alpha)^2-\frac{(1-2\,\alpha)^2}{48}\,\(\sqrt{19\,p^2-20\,p+12}-\sqrt3\,(p-2)\)^2
\]
takes negative values if and only if
\[
p> p_5(\alpha):=\frac{2-\alpha+\sqrt{25\,\alpha^2-64\,\alpha+40}}{2\,\alpha-1}\quad\mbox{and}\quad\alpha>\frac{3+\sqrt3}6\approx0.788675\,.
\]

\begin{remark}\label{Rem:destabilization}
Note that we have obtained symmetry breaking by destabilization of the symmetric spinor~$\phi_\star$ along single channel functions $\varphi\in\mathcal S_k^m$. For such functions, it is easily seen that the above choices of $k$ and $m$ are the best possible ones. It is not unlikely that by mixing several channels one might increase the region of instability\footnote{Added in proof : Recent work of M.J.~Esteban and R.~Frank suggests that this is the case.}. Finding the optimal region is an open problem.
\end{remark}

\begin{theorem}\label{Thm:symbreak} Let $\alpha\in\R\setminus\Lambda$ and $p\in(2,6)$. Symmetric optimizers are linearly unstable and $C_{\alpha,p}<C^\star_{\alpha,p}$ if one of the following conditions is satisfied:
\begin{enumerate}
\item[\rm (i)] $\alpha\in\(-1/2,\big(1-\sqrt3\big)/4\)$ and $p>\min\big\{p_1(\alpha),p_2(\alpha)\big\}$,
\item[\rm (ii)] $\alpha\in\(\big(3+\sqrt3\big)/6,1\)$ and $p>p_5(\alpha)$,
\item[\rm (iii)] $\alpha>1$.
\end{enumerate}
Moreover, $C_{\alpha,p}<C^\star_{\alpha,p}$ if $\alpha<-1/2$ and $\big(-(1+\alpha),p\big)$ is in an unstable region {\rm (i)}, {\rm (ii)} or {\rm (iii)}.
\end{theorem}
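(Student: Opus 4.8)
The plan is to combine the variational reduction carried out in the lemmas above with the three explicit sign computations for $\mathsf q[1,1/2;\alpha,p]$ and $\mathsf q[-2,1/2;\alpha,p]$. The crucial link is that, by the two lemmas immediately preceding the statement, for every admissible pair $(k,m)$ with $k\in\Z\setminus\{-1,0\}$ and $m\in M_k$ one has $\min_{\varphi\in\mathcal S_k^m}\mathcal Q[\varphi]=\tfrac14\,\mathsf q[k,m;\alpha,p]$. Hence, as soon as $\mathsf q[k,m;\alpha,p]<0$ for \emph{some} such pair, the minimizer $\varphi=\varphi_k^m\neq0$ satisfies $\mathcal Q[\varphi]<0$; since $\mathcal F[\phi_\star]=0$ by the normalization of $\phi_\star$ and $\mathcal Q$ is its second-order variation, this gives $\mathcal F[\phi_\star+\varepsilon\,\varphi]=\varepsilon^2\,\mathcal Q[\varphi]+o(\varepsilon^2)<0$ for all small $\varepsilon>0$, with $\nrmc{\phi_\star+\varepsilon\,\varphi}p>0$, so that $C_{\alpha,p}\le\mathcal G_\alpha[\phi_\star+\varepsilon\,\varphi]<C^\star_{\alpha,p}$ with $\mathcal G_\alpha$ from~\eqref{global-functional-cylinder} --- that is, both linear instability of $\phi_\star$ and the strict inequality of the constants. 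It therefore only remains to check, case by case, that the stated hypotheses force $\mathsf q[k,m;\alpha,p]<0$ for a convenient choice of $(k,m)$; throughout cases (i)--(iii) we have $\alpha>-1/2$, so $\phi_\star$ is indeed of type $\mathcal H_0$ and the computation of $\mathcal Q$ applies.

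For case~(i) I would distinguish according to whether $p>p_1(\alpha)$ or $p>p_2(\alpha)$. If $p>p_1(\alpha)$, then the first bullet above applies, its hypothesis $\alpha<(1-\sqrt3)/4$ being part of~(i) and $\alpha\in(-1/2,0)$ following from $(1-\sqrt3)/4<0$; this gives $\mathsf q[1,1/2;\alpha,p]<0$. If instead $p>p_2(\alpha)$, I would note from the explicit formula that $p_2(\alpha)<6$ holds precisely for $\alpha<(1-\sqrt2)/2$, so that the constraints $p\in(2,6)$ and $p>p_2(\alpha)$ already place us in the range of validity of the second bullet, which yields $\mathsf q[-2,1/2;\alpha,p]<0$. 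Either way symmetry breaking follows. Case~(ii) is an immediate application of the third bullet: since $(3+\sqrt3)/6<\alpha<1$ and $p>p_5(\alpha)$, that bullet gives $\mathsf q[1,1/2;\alpha,p]<0$.

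Case~(iii), $\alpha>1$, is the only place requiring a short computation beyond the quoted bullets. Using again the channel $k=1$, $m=1/2$, I claim that
$$
\mathsf q[1,1/2;\alpha,p]=(3-2\,\alpha)^2-\tfrac{(1-2\,\alpha)^2}{48}\,\big(\sqrt{19\,p^2-20\,p+12}+\sqrt3\,(p-2)\big)^2<0\quad\text{for all }p\in(2,6)\,.
$$
Indeed, for $\alpha>1$ one has $|3-2\,\alpha|<2\,\alpha-1$, hence $(3-2\,\alpha)^2<(1-2\,\alpha)^2$, while $p\mapsto\sqrt{19\,p^2-20\,p+12}+\sqrt3\,(p-2)$ equals $\sqrt{48}$ at $p=2$ and is strictly increasing on $(2,\infty)$, so the subtracted term is $>(1-2\,\alpha)^2$ throughout $(2,6)$; combining these yields the claim (equivalently, the threshold $p_5(\alpha)$ of the third bullet has dropped to or below $2$ once $\alpha\ge1$). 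This elementary verification, together with the bookkeeping of which bullet is in force in case~(i), is the only subtle point; everything else follows directly from the preceding lemmas.

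For the last assertion, I would invoke the symmetry established around~\eqref{logarithmicalt}: the map $\phi\mapsto(\sigma\cdot\omega)\,\phi$ is a bijection between optimizers of~\eqref{logarithmicSCKN} for the parameter $\alpha$ and those for $-(\alpha+1)$, sends symmetric spinors of type $\mathcal H_0$ to those of type $\mathcal H_{-2}$, and preserves both $C_{\alpha,p}$ and $C^\star_{\alpha,p}$; hence $C_{\alpha,p}<C^\star_{\alpha,p}$ if and only if $C_{-(\alpha+1),p}<C^\star_{-(\alpha+1),p}$. Thus, when $\alpha<-1/2$ and $\big(-(\alpha+1),p\big)$ lies in one of the regions (i)--(iii), the desired conclusion for $\alpha$ follows from the conclusion for $-(\alpha+1)$ proved above. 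The main obstacle, such as it is, is purely administrative.
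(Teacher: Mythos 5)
Your proposal is correct and reproduces, essentially verbatim, the argument that the paper carries out in the paragraphs preceding the statement of Theorem~\ref{Thm:symbreak} (the paper does not write a separate ``proof'' environment, so the verification must be read off from the bullet computations and the two preparatory lemmas, exactly as you do). Two places where you usefully fill in what the paper leaves implicit deserve a remark: (a) your observation that, because $p_2(\alpha)<6$ forces $\alpha<(1-\sqrt2)/2$, the extra constraint in the second bullet is automatically satisfied whenever $p\in(2,6)$ and $p>p_2(\alpha)$ --- this is exactly why condition~(i) can be stated with the weaker threshold $(1-\sqrt3)/4$ and with $\min\{p_1,p_2\}$; and (b) your direct verification that $\mathsf q[1,1/2;\alpha,p]<0$ for all $p\in(2,6)$ once $\alpha>1$, which is needed because the formula for $p_5(\alpha)$ as printed has a negative discriminant on a subinterval of $(1,3/2)$ and therefore cannot literally be used there. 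Your check (value $(3-2\alpha)^2-(1-2\alpha)^2<0$ at $p=2$ plus monotonicity of the bracketed term in $p$) is robust enough that it settles case~(iii) regardless of the sign in front of $\sqrt3(p-2)$ in the displayed expression, which is fortunate since that sign appears to be a misprint in the paper (the derivation of $\mathsf q$ via $A_1^{1/2}$ and $B$ gives $-\sqrt3(p-2)$, and indeed it is that version that reproduces the stated $p_5(\alpha)$). The reduction from negativity of $\mathcal Q$ to $C_{\alpha,p}<C^\star_{\alpha,p}$, and the final symmetry argument via $\phi\mapsto(\sigma\cdot\omega)\phi$ for $\alpha<-1/2$, match the paper exactly.
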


\begin{figure}
\floatbox[{\capbeside\thisfloatsetup{capbesideposition={right,center},capbesidewidth=10cm}}]{figure}[\FBwidth]
{\caption{\small In the range $\alpha\in[-1/2,0)$, a range of linear instability given either by the condition $p>p_1(\alpha)$, or by the condition $p>p_2(\alpha)$ is represented in \redbw. Altogether, we learn from~Theorem~\ref{Thm:symbreak} that optimal spinors are not symmetric if $p$ is larger than $\min\big\{p_1(\alpha),p_2(\alpha)\big\}$.
}\label{Fig:SCKNlog-Detail-2}}
{\includegraphics[width=5cm]{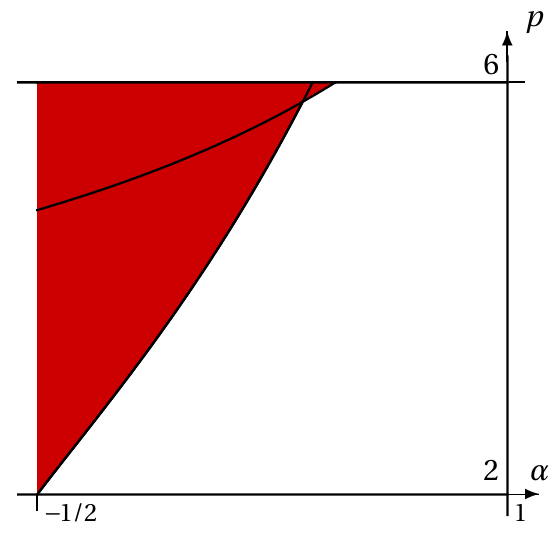}}
\end{figure}
\smallskip\noindent See Fig.~\ref{Fig:SCKNlog-Detail-2} for a plot of $p_1$ and $p_2$ in Case (i). The functions $\alpha\mapsto p_j(\alpha)$ with \hbox{$j=1$, $2$, $5$} are strictly monotone on the intervals considered in Theorem~\ref{Thm:symbreak}, and their inverses $p\mapsto\alpha_j(p)$ are the functions which appear in Theorem~\ref{Thm:main}.

\section{Proof of the main results}\label{Sec:proofmain}

\begin{proof}[Proof of Theorem~\ref{Thm:main}] The existence of optimizers for~\eqref{logarithmicSCKN} for $p\in(2,6)$ and $\alpha\not\in\Lambda$ is proved in Theorem~\ref{Thm:Existence}. The symmetry results for the optimizers are collected from Corollary~\ref{Cor:green}, Theorem~\ref{Thm:symmetry1} and Corollary~\ref{Cor:symmetry2}. Symmetry breaking follows from the linear unstability of Theorem~\ref{Thm:symbreak}. \end{proof}

\begin{proof}[Proof of Theorem~\ref{Cor:main}]
We recall the equivalence between~\eqref{SCKN} and~\eqref{logarithmicSCKN} and define $\overline\alpha_\star$ and $\overline\alpha^\star$ as in Remark~\ref{translation}. The argument of $\alpha_\star$ and $\alpha^\star$ in these formulas is motivated by the formula~\eqref{param} for $p$ in terms of $\alpha$ and $\beta$. The assertions are therefore an immediate consequence of the first part of Theorem~\ref{Thm:main}.
\end{proof}

\appendix\section{The best constant among symmetric functions}\label{App00}

Our goal in this appendix is to compute the optimal constants $\mathcal C_{\alpha,\beta}^\star$ and $C_{\alpha,p}^\star$ in Inequalities~\eqref{SCKN} and~\eqref{logarithmicSCKN} when restricted to symmetric functions.

\medskip We recall that $K_{\lambda,p}$ denotes the optimal constant in Inequality~\eqref{OneD}. The following lemma essentially goes back to Nagy~\cite{MR4277}.
\begin{lemma}\label{nagy} For any $\lambda>0$ and $p\in(2,+\infty)$, the optimal constant in Inequality~\eqref{OneD} is
$$
K_{\lambda,p} = \lambda^{\frac12+\frac1p}\,K_{1,p}\quad\mbox{with}\quad K_{1,p}=\frac p2\(\frac{8\,\sqrt\pi\,\Gamma\big(\tfrac2{p-2}\big)}{(p^2-4)\,\Gamma\big(\tfrac12+\tfrac2{p-2}\big)}\)^{1-\frac2p}\,.
$$
The optimizers are, up to a translation in $s$ and multiplication by a constant, given by~\eqref{ustar}. \end{lemma}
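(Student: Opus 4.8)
The plan is to reduce the computation of $K_{\lambda,p}$ to the case $\lambda=1$ by scaling, and then to solve the one-dimensional Euler--Lagrange equation explicitly. First, given $u\in\mathrm H^1(\R)$, set $u(s)=v(\sqrt\lambda\,s)$; changing variables gives $\nrmr{u'}2^2=\lambda^{1/2}\nrmr{v'}2^2$, $\lambda\,\nrmr u2^2=\lambda^{1/2}\nrmr v2^2$ and $\nrmr up^2=\lambda^{-1/p}\nrmr vp^2$, so that $\mathrm{(LHS)}/\nrmr up^2=\lambda^{1/2+1/p}\,\big(\nrmr{v'}2^2+\nrmr v2^2\big)/\nrmr vp^2$. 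Optimizing over $v$ yields $K_{\lambda,p}=\lambda^{1/2+1/p}\,K_{1,p}$, and it remains only to compute $K_{1,p}$.

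For $K_{1,p}$ I would pass to the Euler--Lagrange equation. Any nonnegative optimizer $u$ for the normalized problem $\nrmr{u'}2^2+\nrmr u2^2=K_{1,p}\,\nrmr up^2$ may be assumed (after translation) even, positive, decreasing on $\R_+$, and, after the usual Lagrange-multiplier rescaling, to solve $-\,u''+u=u^{p-1}$ on $\R$. This ODE has the unique (up to translation) positive decaying solution $u(s)=\big(\tfrac p2\big)^{1/(p-2)}\big(\cosh(\tfrac12(p-2)\,s)\big)^{-2/(p-2)}$; equivalently, in the notation of Lemma~\ref{Lem:Keller}, $u=A\,u_0(B\,s)$ with $\gamma=\tfrac12\tfrac{p+2}{p-2}$ and $B=\tfrac12(p-2)$, which is exactly~\eqref{ustar} with $\lambda=1$. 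One verifies this either by multiplying the ODE by $u'$ to obtain the first integral $(u')^2=u^2-\tfrac2p\,u^p$ and integrating, or by direct substitution. Either way this identifies the optimizers and, together with the scaling step, establishes the form of the optimizers claimed in the lemma (the map $\lambda\mapsto\lambda^{1/2}$ in the argument of $u_0$ matching the factor $\sqrt\lambda$ in~\eqref{ustar}).

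Finally, with the explicit optimizer in hand, $K_{1,p}$ is obtained by evaluating $\big(\nrmr{u'}2^2+\nrmr u2^2\big)/\nrmr up^2$ on it. Using the first integral $(u')^2=u^2-\tfrac2p\,u^p$ one gets $\nrmr{u'}2^2=\nrmr u2^2-\tfrac2p\,\nrmr up^p$, while testing the ODE against $u$ gives $\nrmr{u'}2^2+\nrmr u2^2=\nrmr up^p$; combining these yields $\nrmr u2^2=\tfrac{p+2}{2p}\nrmr up^p$ and $\nrmr{u'}2^2=\tfrac{p-2}{2p}\nrmr up^p$, hence the Rayleigh quotient equals $\nrmr up^p/\nrmr up^2=\nrmr up^{p-2}$. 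It then remains to compute $\nrmr up^p=\big(\tfrac p2\big)^{p/(p-2)}\int_\R\big(\cosh(\tfrac12(p-2)s)\big)^{-2p/(p-2)}\,ds$ via the standard Beta-integral $\int_\R(\cosh t)^{-2a}\,dt=\tfrac{\sqrt\pi\,\Gamma(a)}{\Gamma(a+\tfrac12)}$ with $a=\tfrac{p}{p-2}$, and to simplify the resulting Gamma quotient into the stated closed form $K_{1,p}=\tfrac p2\big(\tfrac{8\sqrt\pi\,\Gamma(2/(p-2))}{(p^2-4)\,\Gamma(1/2+2/(p-2))}\big)^{1-2/p}$, using $\Gamma(a)=\tfrac{p-2}{2}\Gamma(\tfrac{2}{p-2})$ to shift the argument. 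The only genuinely delicate point is the uniqueness of the positive decaying solution of the ODE (and the a priori regularity/decay of optimizers needed to reduce to it); this is classical in one dimension — e.g.\ via the shooting/phase-plane analysis of $-u''+u=u^{p-1}$ — so I would simply cite Nagy~\cite{MR4277} (and Lemma~\ref{Lem:Keller}, whose equality case already encodes the same ODE) rather than reproving it. Everything else is bookkeeping with the explicit profile.
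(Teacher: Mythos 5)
Your argument follows essentially the same route as the paper: reduce to $\lambda=1$ by scaling, derive the Euler--Lagrange equation $-u''+u=u^{p-1}$, invoke uniqueness of its positive $\mathrm H^1(\R)$ solution to obtain the $\cosh$-profile, and then evaluate the Rayleigh quotient on it via a Beta-integral (the paper itself compresses the last step to a citation of~\cite{MR3263963}). One small typo in your bookkeeping: with $a=\tfrac p{p-2}$ the shift is $\Gamma(a)=\tfrac2{p-2}\,\Gamma\big(\tfrac2{p-2}\big)$, not $\tfrac{p-2}2\,\Gamma\big(\tfrac2{p-2}\big)$; the correct factor is what yields the $8$ and the $p^2-4$ in the stated value of $K_{1,p}$.
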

\begin{proof}[Proof of Lemma~\ref{nagy}]
The proof is well known, but for the sake of completeness we sketch its major steps. The constant $K_{\lambda,p}$ is computed from $K_{1,p}$ by a simple scaling argument, so we can assume that $\lambda=1$.

It is easy to see that $K_{1,p}$ is positive and that there is an optimizer $v\in\mathrm H^1(\R)$ for the inequality, which is a nonnegative function. By homogeneity we may assume that $\nrmr vp^{p-2}=K_{1,p}$ so that $v$ satisfies
\[
-\,v''+v=v^{p-1}\quad\text{on}\quad\R\,.
\]
Up to translation, $v(s)=(p/2)^{1/(p-2)}\,\big(\cosh\big((p-2)\,s/2\big)\big)^{-2/(p-2)}$ is the unique positive solution in $\mathrm H^1(\R)$. This implies the claimed form of optimizers and of the optimal constant. We refer to~\cite{MR3263963} for further details.
\end{proof}

Let us continue with~\eqref{logarithmicSCKN} restricted to symmetric spinors as defined in Definition~\ref{symmetric}. In the symmetric problem we may allow for arbitrarily large exponents $p$ and recall that $\mathcal C_{\alpha,\beta}^\star=(4\,\pi)^{-(2-p)/p}\,C_{\alpha,p}^\star$ if $p$ is given by~\eqref{param}.
\begin{proposition}\label{symmetricproblem}
Let $\alpha\in\R$ and assume that $p\in(2,+\infty)$. Then
$$
C_{\alpha,p}^\star =
\begin{cases}
\big(\alpha-\tfrac12\big)^{1+\frac 2p} \, K_{1,p} & \text{if}\ \alpha\geq -\frac12 \,,\\
\big(\alpha+\tfrac32\big)^{1+\frac 2p} \, K_{1,p} & \text{if}\ \alpha\leq -\frac12 \,.
\end{cases}
$$
In particular, we have $C_{\alpha,p}^\star>0$ if and only if $\alpha\not\in\big\{-\frac32,+\frac12\big\}$. In this case, let
$$
\lambda =
\begin{cases}
\big(\alpha - \frac12\big)^2 & \text{if}\ \alpha\geq-\frac12 \,,\\
\big (\alpha+\tfrac32\big)^2 & \text{if}\ \alpha\leq -\frac12 \,.
\end{cases}
$$
Then the optimizers for $C_{\alpha,p}^\star$ are given by
\be{phi:opt}
\phi(s,\omega)=u(s)\,\chi(\omega)\quad\forall\,(s,\omega)\in\R\times\Sp\,,
\ee
where $\chi\in\mathcal H_k$ with $k=0$ if $\alpha>-\frac12$, $k=-\,2$ if $\alpha<-\frac12$ and $k\in\{0,-2\}$ if $\alpha=-\,\frac12$, and where, up to a translation in $s$ and multiplication by a constant, $u$ is given by~\eqref{ustar}. Moreover, $C_{\alpha,p}^\star = \nrmr{u_\star}p^{p-2}$.
\end{proposition}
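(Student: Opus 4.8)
The plan is to reduce the restricted inequality, on each of the two families of symmetric spinors described in Definition~\ref{symmetric}, to the one-dimensional Gagliardo--Nirenberg inequality~\eqref{OneD}, and then to optimize over the two families using Lemma~\ref{nagy}.

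First suppose $\phi$ is symmetric of type $\mathcal H_0$, so $\phi(s,\omega)=u(s)\,\chi_0$ with a fixed $\chi_0\in\C^2$. Since constant spinors span the $k=0$ eigenspace of $\sigma\cdot L$ (see Appendix~\ref{App5}), the operator $\sigma\cdot L-\alpha+\tfrac12$ acts on $\phi$ as multiplication by $\tfrac12-\alpha$. Using that $d\omega$ is a probability measure, the left-hand side of~\eqref{logarithmicSCKN} equals $|\chi_0|^2\,\ir{\(|u'|^2+(\alpha-\tfrac12)^2\,|u|^2\)}$ while $\ic{|\phi|^p}=|\chi_0|^p\,\ir{|u|^p}$, so $|\chi_0|$ cancels in the quotient and~\eqref{logarithmicSCKN} restricted to type $\mathcal H_0$ becomes exactly~\eqref{OneD} with $\lambda=(\alpha-\tfrac12)^2$. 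By Lemma~\ref{nagy}, its best constant is $K_{\lambda,p}=\lambda^{\frac12+\frac1p}\,K_{1,p}$ with this $\lambda$, attained precisely by $u$ of the form~\eqref{ustar} up to translation in $s$ and a multiplicative constant. For $\phi$ of type $\mathcal H_{-2}$, that is $\phi(s,\omega)=u(s)\,(\sigma\cdot\omega)\,\chi_0$, the same computation applies once one notes that $\mathcal H_{-2}$ is the $k=-2$ eigenspace of $\sigma\cdot L$, so that $\sigma\cdot L-\alpha+\tfrac12$ acts as $-\tfrac32-\alpha$, and that $(\sigma\cdot\omega)^2=1$ with $\sigma\cdot\omega$ Hermitian, so $|(\sigma\cdot\omega)\,\chi_0|=|\chi_0|$ pointwise. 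Hence on this family~\eqref{logarithmicSCKN} reduces to~\eqref{OneD} with $\lambda=(\alpha+\tfrac32)^2$, with best constant $\big((\alpha+\tfrac32)^2\big)^{\frac12+\frac1p}K_{1,p}$.

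Since every symmetric spinor lies in exactly one of these two families (they meet only at $0$), $C^\star_{\alpha,p}$ is the smaller of the two constants, i.e.\ $C^\star_{\alpha,p}=\big(\min\{(\alpha-\tfrac12)^2,(\alpha+\tfrac32)^2\}\big)^{\frac12+\frac1p}\,K_{1,p}$. From $(\alpha+\tfrac32)^2-(\alpha-\tfrac12)^2=2\,(2\,\alpha+1)$ one sees that the first term is the minimum exactly when $\alpha\ge-\tfrac12$, which yields the stated dichotomy, and $C^\star_{\alpha,p}>0$ iff the selected $\lambda$ is nonzero, i.e.\ iff $\alpha\notin\{-\tfrac32,\tfrac12\}$. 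For the equality cases: the one-dimensional factor $u$ must optimize~\eqref{OneD} with the relevant $\lambda$, hence is~\eqref{ustar} up to translation and scaling, while the angular factor $\chi$ must lie in the family realizing the minimum, namely $\mathcal H_0$ ($k=0$) if $\alpha>-\tfrac12$, $\mathcal H_{-2}$ ($k=-2$) if $\alpha<-\tfrac12$, and either one when $\alpha=-\tfrac12$ (both families then realize the minimum). Finally, $\nrmr{u_\star}p^{p-2}=C^\star_{\alpha,p}$ because $u_\star$ in~\eqref{ustar} solves the Euler--Lagrange equation $-u''+\lambda u=u^{p-1}$: multiplying by $u_\star$ and integrating gives $\nrmr{u_\star'}2^2+\lambda\,\nrmr{u_\star}2^2=\nrmr{u_\star}p^p$, so the Gagliardo--Nirenberg quotient at $u_\star$ equals $\nrmr{u_\star}p^{p-2}$, and this value is $K_{\lambda,p}=C^\star_{\alpha,p}$.

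I expect no genuinely hard analytic step here: all the one-dimensional input is packaged in Lemma~\ref{nagy}. The only points requiring care are the identification of the action of $\sigma\cdot L$ on the two symmetric families (which rests on the spectral facts recalled in Appendix~\ref{App5}) and the verification that these two families exhaust the symmetric spinors and overlap only at $0$, so that passing to the minimum of the two constants is legitimate and the equality cases are correctly read off, in particular at the threshold $\alpha=-\tfrac12$ where both families contribute.
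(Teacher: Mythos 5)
Your proof is correct and follows essentially the same route as the paper: reduce to the one-dimensional Gagliardo--Nirenberg inequality~\eqref{OneD} on each of the two symmetric families, using that $\sigma\cdot L$ acts as $k=0$ or $k=-2$ so that $\big|\big(\sigma\cdot L-\alpha+\tfrac12\big)\phi\big|^2$ becomes $(k-\alpha+\tfrac12)^2\,|u|^2$, then invoke Lemma~\ref{nagy} and take the minimum over $k\in\{0,-2\}$. You also spell out the final identity $C^\star_{\alpha,p}=\nrmr{u_\star}p^{p-2}$ via the Euler--Lagrange equation, which the paper leaves implicit.
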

\noindent Notice that $C^\star_{-\,(\alpha+1),p}=C^\star_{\alpha,p}$ as for the optimal constant for~\eqref{logarithmicSCKN}. However, in contrast to~\eqref{logarithmicSCKN}, the constant $C^\star_{\alpha,p}$ does not vanish for $\alpha\in\Lambda\setminus\big\{-\frac32,+\frac12\big\}$.
So far, we have considered Inequality~\eqref{logarithmicSCKN} in logarithmic coordinates. The results can readily be transferred to Inequality~\eqref{SCKN} in the original coordinates in view of Lem\-ma~\ref{Lem:chvar}. In particular, we obtain the optimal value of $\mathcal C^\star_{\alpha,\beta} = (4\,\pi)^{(p-2)/p}\,C^\star_{\alpha,p}$.

\begin{proof}[Proof of Proposition~\ref{symmetricproblem}] A symmetric spinor $\phi$ can be written in the form~\eqref{phi:opt} with $u\in\mathrm H^1(\R)$ and $\chi\in\mathcal H_k$ for $k\in\{0,-2\}$. Recall that $\mathcal H_0$ is generated by constant spinors and that $\mathcal H_2$ is generated by $\sigma\cdot\omega$ times constant spinors. Since $|(\sigma\cdot\omega)\,\chi|^2=|\chi|^2$, in both cases $|\chi(\omega)|$ is independent of $\omega$. Using $\sigma\cdot L\,\chi = \tilde k\,\chi$, Inequality~\eqref{logarithmicSCKN} becomes
\[
\ir{|u'|^2}+\big(k-\alpha+\tfrac12\big)^2\ir{|u|^2}\ge C^\star_{\alpha,p} \(\ir{|u|^p}\)^{2/p}\,.
\]
In view of the dependence of $\lambda=(k-\alpha+1/2)^2$ on $k$ we deduce that
\begin{itemize}
\item if $\alpha>-\,\frac12$, then minimizing symmetric spinors are necessarily of type $\mathcal H_0$,
\item if $\alpha<-\,\frac12$, then minimizing symmetric spinors are necessarily of type $\mathcal H_2$,
\item if $\alpha =-\,\frac12$, the minimizing symmetric spinors can be of either type $\mathcal H_0$ and $\mathcal H_{-2}$.
\end{itemize}
We deduce that the sharp constant $C_{\alpha,p}^\star$ coincides with $K_{\lambda,p}$ and that the $u$-part of a symmetric optimizer $\phi$ coincides with an optimizer of~\eqref{OneD} found in Lemma~\ref{nagy}. \end{proof}

The minimization problem over a certain class of spinors that is larger than the class of symmetric spinors, but smaller than $\mathcal H_{-2}\oplus\mathcal H_0$, can still be solved explicitly. We~let
\begin{align*}
\mathcal S := \left\{ \psi\in\mathcal D_{\alpha,\beta} :\ \psi(x) = \left( f(|x|) + i\,\sigma\cdot\tfrac x{|x|} \, g(|x|) \right)\chi_0 \ \text{for some functions}\ f,\ g \ \text{on}\ \R_+ \ \right. \\
\left. \text{with}\ \overline f g \ \text{real-valued and some}\ \chi_0\in\C^2 \right\}.
\end{align*}
\begin{proposition}\label{Prop:H-2H0} For all $\alpha\in\R$, $\alpha\leq\beta\leq\alpha+1$ we have
$$
\inf_{\psi\in\mathcal S\setminus\{0\}} \frac{\ird{|\sigma\cdot \nabla \psi (x)|^2\,|x|^{-2\,\alpha}}}{\left(\ird{|\psi(x)|^p\,|x|^{-\beta\,p}}\right)^{2/p}}=\mathcal C_{\alpha,\beta}^\star \,.
$$
The minimizers are of the form 
$$
h(|x|) \left( c_1 + i\,\sigma\cdot\tfrac x{|x|}\,c_2 \right)\chi_0
$$
with $c_1$, $c_2\in\C$, $\chi_0\in\C^2$ and $h(r) = r^{\alpha-1/2} u_\star(\ln(r/R))$ for some $R>0$, where $u_\star$ is given by~\eqref{ustar}, $c_2=0$ if $\alpha>-\frac12$, $c_1=0$ if $\alpha<-\frac12$ and $\overline{c_1}\,c_2\geq 0$ if $\alpha=-\frac12$.
\end{proposition}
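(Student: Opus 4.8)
The plan is to pass to logarithmic coordinates and reduce the minimization over $\mathcal S$ to the one--dimensional inequality~\eqref{OneD}. By Lemma~\ref{Lem:chvar}, writing $\psi(x)=|x|^{\alpha-1/2}\phi(s,\omega)$ with $s=\log|x|$, $\omega=x/|x|$, a spinor $\psi\in\mathcal S$ corresponds to $\phi(s,\omega)=\bigl(a(s)+i\,\sigma\cdot\omega\,b(s)\bigr)\chi_0$ with $a(s)=|x|^{1/2-\alpha}f(|x|)$, $b(s)=|x|^{1/2-\alpha}g(|x|)$, so that $\overline a\,b$ is real--valued, and we normalize $|\chi_0|=1$. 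First I would evaluate the two sides of~\eqref{logarithmicSCKN} on such $\phi$. Since constant spinors lie in $\mathcal H_0$ (on which $\sigma\cdot L=0$) and $\sigma\cdot\omega$ times a constant lies in $\mathcal H_{-2}$ (on which $\sigma\cdot L=-2$), one has $\bigl(\sigma\cdot L-\alpha+\tfrac12\bigr)\phi=\bigl(\tfrac12-\alpha\bigr)a\,\chi_0+\bigl(-\tfrac32-\alpha\bigr)i\,\sigma\cdot\omega\,b\,\chi_0$, and when one expands the squared norms of $\partial_s\phi$ and of this expression, every cross term is, for each fixed $s$, a multiple of $\chi_0^*(\sigma\cdot\omega)\chi_0$, whose integral over $\Sp$ vanishes because $\int_\Sp\omega\,d\omega=0$. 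Hence
\[
\ic{\Bigl(|\partial_s\phi|^2+\bigl|\bigl(\sigma\cdot L-\alpha+\tfrac12\bigr)\phi\bigr|^2\Bigr)}=\ir{\Bigl(|a'|^2+|b'|^2+\bigl(\alpha-\tfrac12\bigr)^2|a|^2+\bigl(\alpha+\tfrac32\bigr)^2|b|^2\Bigr)}\,.
\]
For the right--hand side one has $|\phi(s,\omega)|^2=|a(s)|^2+|b(s)|^2-2\,\chi_0^*(\sigma\cdot\omega)\chi_0\,\Im\bigl(\overline{a(s)}\,b(s)\bigr)$, and it is exactly here that the defining constraint of $\mathcal S$ is used: since $\overline a\,b\in\R$ the $\omega$--dependent term drops out, so $|\phi(s,\omega)|^2=\rho(s)^2$ with $\rho:=\sqrt{|a|^2+|b|^2}$ and $\ic{|\phi|^p}=\ir{\rho^p}$.

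Next I would bound the resulting Rayleigh quotient. Pointwise a.e.\ one has $|a'|^2+|b'|^2\ge|\rho'|^2$, because $\rho\rho'=\Re(\overline a\,a')+\Re(\overline b\,b')$ and $|\rho\rho'|\le|a|\,|a'|+|b|\,|b'|\le\rho\,(|a'|^2+|b'|^2)^{1/2}$ by Cauchy--Schwarz; and $\bigl(\alpha-\tfrac12\bigr)^2|a|^2+\bigl(\alpha+\tfrac32\bigr)^2|b|^2\ge\lambda\,\rho^2$ with $\lambda:=\min\bigl\{(\alpha-\tfrac12)^2,(\alpha+\tfrac32)^2\bigr\}$, which equals $(\alpha-\tfrac12)^2$ for $\alpha\ge-\tfrac12$ and $(\alpha+\tfrac32)^2$ for $\alpha\le-\tfrac12$, i.e.\ the $\lambda$ of Proposition~\ref{symmetricproblem}. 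Since $a,b\in\mathrm H^1(\R)$ forces $\rho\in\mathrm H^1(\R)$, Inequality~\eqref{OneD} and Lemma~\ref{nagy} give, when $\lambda>0$,
\[
\frac{\ir{\bigl(|a'|^2+|b'|^2+(\alpha-\tfrac12)^2|a|^2+(\alpha+\tfrac32)^2|b|^2\bigr)}}{\bigl(\ir{\rho^p}\bigr)^{2/p}}\ \ge\ \frac{\ir{\bigl(|\rho'|^2+\lambda\,\rho^2\bigr)}}{\bigl(\ir{\rho^p}\bigr)^{2/p}}\ \ge\ K_{\lambda,p}=C^\star_{\alpha,p}\,,
\]
which, back in the original coordinates via Lemma~\ref{Lem:chvar}, is the lower bound $\mathcal C^\star_{\alpha,\beta}$. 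The matching upper bound is immediate: the symmetric optimizers of Proposition~\ref{symmetricproblem} --- of type $\mathcal H_0$ (so $g\equiv0$) when $\alpha\ge-\tfrac12$ and of type $\mathcal H_{-2}$ (so $f\equiv0$) when $\alpha\le-\tfrac12$ --- lie in $\mathcal S$ and saturate it. The degenerate values $\alpha\in\{\tfrac12,-\tfrac32\}$ give $\lambda=0$ and $\mathcal C^\star_{\alpha,\beta}=0$, so the bound $\ge0$ is trivial, and the endpoint $\beta=\alpha+1$ (that is, $p=2$) is covered directly by Proposition~\ref{Prop:Hardy}, with the infimum then not attained.

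Finally, for $\alpha<\beta<\alpha+1$ and $\alpha\notin\{\tfrac12,-\tfrac32\}$ I would identify the minimizers by tracking equality through this chain. Equality in Cauchy--Schwarz forces $a$ and $b$ to be pointwise proportional with a real, single--signed ratio, so $a=c_1h$ and $b=c_2h$ for some $c_1,c_2\in\C$ and a real function $h$; equality in the second bound forces $b\equiv0$ if $\alpha>-\tfrac12$ and $a\equiv0$ if $\alpha<-\tfrac12$; equality in~\eqref{OneD} forces $\rho$, hence $h$, to be a translate and positive multiple of the function $u_\star$ of~\eqref{ustar}. Undoing $\psi(x)=|x|^{\alpha-1/2}\phi(\log|x|,x/|x|)$ then yields $h(r)=r^{\alpha-1/2}u_\star(\log(r/R))$ for some $R>0$ and $\psi(x)=h(|x|)\bigl(c_1+i\,\sigma\cdot\tfrac{x}{|x|}\,c_2\bigr)\chi_0$, while $\overline a\,b\in\R$ forces $\overline{c_1}\,c_2\in\R$; when $\alpha=-\tfrac12$ the reflection $\psi(x)\mapsto\psi(-x)$, which leaves~\eqref{SCKN} invariant and sends $g\mapsto-g$, allows us to normalize $\overline{c_1}\,c_2\ge0$. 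I expect the main difficulty to lie in the first step --- the Pauli--matrix bookkeeping in the quadratic form, and above all the observation that the reality of $\overline f\,g$ is precisely what makes $|\psi|$ constant on spheres --- and secondarily in combining the Cauchy--Schwarz and $\lambda$--minimization equality cases to pin down the exact form of the minimizers.
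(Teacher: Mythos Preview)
Your proof is correct and complete, but it takes a genuinely different route from the paper's. After the common first step (passing to logarithmic coordinates and showing that the cross terms integrate to zero, so that the numerator reduces to $\int|a'|^2+|b'|^2+(\alpha-\tfrac12)^2|a|^2+(\alpha+\tfrac32)^2|b|^2$ and the denominator to $\int(|a|^2+|b|^2)^{p/2}$), the paper applies the one--dimensional inequality~\eqref{OneD} \emph{separately} to $u$ and to $v$ and then invokes the triangle inequality in $\mathrm L^{p/2}$ to combine $\|u\|_p^2+\|v\|_p^2\ge\|(|u|^2+|v|^2)^{1/2}\|_p^2$. You instead merge $a,b$ into $\rho=(|a|^2+|b|^2)^{1/2}$ first, use the diamagnetic bound $|a'|^2+|b'|^2\ge|\rho'|^2$ together with the pointwise estimate on the mass term, and apply~\eqref{OneD} only once, to $\rho$.

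Both arguments are short; yours trades Minkowski's inequality for a pointwise Cauchy--Schwarz, which is arguably more elementary and makes the equality analysis for $\alpha\ne-\tfrac12$ very transparent (the mass inequality is strict unless the ``wrong'' component vanishes). Conversely, the paper's route gives the proportionality of $|u|^2$ and $|v|^2$ at $\alpha=-\tfrac12$ directly from the Minkowski equality case, whereas you have to extract it from the diamagnetic equality $(a',b')=\mu(s)(a,b)$ with $\mu$ real --- your phrase ``pointwise proportional with a real, single--signed ratio'' would benefit from one more line spelling this ODE out. Finally, your reflection remark for $\alpha=-\tfrac12$ is the right way to read the condition $\overline{c_1}c_2\ge0$: it is a normalization rather than a constraint, and the paper's own treatment of this borderline case is equally brief.
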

\noindent In view of the definition of symmetry given in Section~\ref{Sec:Introduction}, Proposition~\ref{Prop:H-2H0} shows that the case $\alpha=-\,1/2$ does not correspond to symmetry. However, we already know that $C_{-1/2,p}<C_{-1/2,p}^\star$ for any $p\in(2,6)$ by Proposition~\ref{Prop:alonglines} and Theorem~\ref{Thm:symbreak}.

\begin{proof} Let $\psi$ be as in the definition of the class $\mathcal S$ where, without loss of generality, \hbox{$|\chi_0|=1$}. We apply Lemma~\ref{Lem:chvar} with $\phi(s,\omega) = \big(u(s)+ i\,\sigma\cdot\omega\,v(s)\big)\chi_0$, where $u$ and $v$ are related to $f$ and $g$ by $f(r) = r^{\alpha-1/2}\,u(\log r)$ and $g(r) = r^{\alpha-1/2}\,v(\log r)$. Since $\overline uv$ and $\langle\chi_0,\sigma\cdot\omega\,\chi_0\rangle$ are real-valued, we find
$$
|\phi|^2 = |u|^2 + |v|^2
\quad\text{and}\quad
\left|\big(\sigma\cdot L - \alpha+\tfrac12\big)\,\phi\right|^2 = \big(\alpha-\tfrac12\big)^2\,|u|^2 + \big(\alpha+\tfrac32\big)^2\,|v|^2 \,.
$$
Moreover,
$$
|\partial_s\phi|^2 = |u'|^2 + |v'|^2 + 2\,\Re\left( \overline{u'}\,v'\,\langle\chi_0,\sigma\cdot\omega\,\chi_0\rangle \right).
$$
Noting that the integral of the latter term with respect to $\omega\in\Sp$ vanishes, we find that
$$
\frac{\ird{|\sigma\cdot \nabla \psi (x)|^2\,|x|^{-2\,\alpha}}}{\left(\ird{|\psi(x)|^p\,|x|^{-\beta\,p}}\right)^{2/p}}
= \frac{\ic{\(|u'|^2+\big(\alpha-\tfrac12\big)^2\,|u|^2 + |v'|^2 + \big(\alpha+\tfrac32\big)^2\,|v|^2\)}
}{(4\,\pi)^{\frac2p-1}\,\(\ic{\(|u|^2+|v|^2\)^{p/2}}\)^{2/p}} \,.
$$
Our goal is to determine the infimum of this quotient with respect to $u$ and $v$.

Let us assume that $\alpha\geq -\frac12$, the opposite case being similar. Then $\big(\alpha+\tfrac32\big)^2\geq\big(\alpha-\tfrac12\big)^2$. Applying Inequality~\eqref{OneD} and Proposition~\ref{symmetricproblem}, we bound the numerator from below by
\begin{align*}
& \ic{\(|u'|^2+\big(\alpha-\tfrac12\big)^2\,|u|^2 + |v'|^2 + \big(\alpha+\tfrac32\big)^2\,|v|^2\)} \\
& \geq C_{\alpha,p}^\star \(\ic{|u|^p}\)^{2/p} + C_{\alpha,p}^\star \(\ic{|v|^p}\)^{2/p} \\
& \quad + \left( \big(\alpha+\tfrac32\big)^2 - \big(\alpha-\tfrac12\big)^2 \right) \ic{|v|^2} \\
& \geq C_{\alpha,p}^\star\(\ic{(|u|^2+|v|^2)^{p/2}}\)^{2/p} + \left( \big(\alpha+\tfrac32\big)^2 - \big(\alpha-\tfrac12\big)^2 \right) \ic{|v|^2}.
\end{align*}
The last inequality comes from the triangle inequality in $\mathrm L^{p/2}(\R^3,\C)$. We conclude that the infimum over $(u,v)$ coincides with $C_{\alpha,p}^\star$ and, when $\alpha> -\frac12$, it is only attained when $v\equiv 0$, that is, when $\psi$ is symmetric of type $\mathcal H_0$.

To determine the cases of equality for $\alpha=-\,\frac12$, we recall that by the equality conditions in Minkowski's inequality (see, e.g.,~\cite[Theorem~2.4]{MR1817225}) we must have either $u\equiv 0$ or $|v|^2 = \lambda\,|u|^2$ for some $\lambda \geq 0$. From this one easily deduces the result.\end{proof}

As another application of Lemma~\ref{nagy} we now show how it implies Lemma~\ref{Lem:Keller}. This argument is well known and due to Lieb and Thirring, but repeated here for the sake of completeness. By H\"older's inequality with $\gamma$ such that $\gamma+1/2$ and $p/2$ are H\"older conjugates, i.e., such that~\eqref{gamma:p} holds, we notice that
\[
\ir{|u'|^2}-\ir{V\,|u|^2}+\lambda\ir{|u|^2}\ge\nrmr{u'}2^2-\nrmr V{\gamma+1/2}\,\nrmr up^2+\lambda\,\nrmr u2^2 \,.
\]
By Lemma~\ref{nagy}, the right side is nonnegative if $\nrmr V{\gamma+1/2}\le \lambda^{\frac12 + \frac1p}\,K_{1,p}$. An optimization on~$u$ with $\nrmr u2^2=1$ shows that $\lambda_1(V) +\lambda\geq 0$ if $\nrmr V{\gamma+1/2}\le \lambda^{\frac12 + \frac1p}\,K_{1,p}$. As in Lemma~\ref{Lem:Keller}, $-\,\lambda_1(V)$ denotes the lowest eigenvalue of the Schr\"odinger operator $-\,\frac{d^2}{ds^2}-V$. This proves~\eqref{Ineq:Keller} with $c_{\rm LT}(\gamma)=K_{1,p}^{-p/(p-2)}$. Moreover, equality in~\eqref{Ineq:Keller} is achieved if the above inequalities are in fact equalities, that is, if $V$ is proportional to $|u|^{p-2}$ and $u$ is optimal for~\eqref{OneD}, i.e., given up to a translation by~\eqref{ustar}. The proof of Lemma~\ref{Lem:Keller} is completed with $V=V_0$ for $\lambda=4/(p-2)^2$ so that $\gamma^2-1/4=2\,p/(p-2)^2$, and $u_0=u_\star/\nrmr{u_\star}2$ with $u_\star$ given by~\eqref{ustar}.

In connection with Lemmas~\ref{Lem:Keller} and~\ref{nagy} we record the following result on the \emph{P\"oschl-Teller operator}, $-\,\frac{d^2}{ds^2}-\nu\,(\nu+1)\,(\cosh s)^{-2}$, that we find useful. The result is implicitly contained in Lemma~\ref{Lem:Keller}, but we show that it can be obtained directly.
\begin{corollary} If $V(s):=(\cosh s)^{-2}$ for all $s\in\R$, then
\[
\lambda_1\big(\nu\,(\nu+1)\,V\big)=\nu^2\quad\forall\,\nu>0\,.
\]
If $\,V_{A,B}(s):=A\,\big(\cosh(B\,s)\big)^{-2}$ for all $s\in\R$, then
\[
\lambda_1(V_{A,B})=\frac14\(\sqrt{4\,A+B^2}-B\)^2\quad\forall\,(A,B)\in\R^+\times\R^+>0\,.
\]
\end{corollary}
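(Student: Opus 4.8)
\smallskip\noindent\emph{Proof proposal.} The plan is to use the factorization (``ground state substitution'') method, which identifies the bottom of the spectrum and produces the ground state at once. First I would fix $\nu>0$, set $u_0(s):=(\cosh s)^{-\nu}$, and introduce the first-order operator $A:=\frac d{ds}+\nu\tanh s$ with formal adjoint $A^*=-\frac d{ds}+\nu\tanh s$. A short computation, using only $(\tanh s)'=(\cosh s)^{-2}$ and $\tanh^2 s=1-(\cosh s)^{-2}$, should yield the operator identity
\[
A^*A=-\,\frac{d^2}{ds^2}-\nu\,(\nu+1)\,(\cosh s)^{-2}+\nu^2\,.
\]
This immediately gives, for every $u\in\mathrm H^1(\R)$,
\[
\ir{|u'|^2}-\nu\,(\nu+1)\ir{\frac{|u|^2}{(\cosh s)^2}}+\nu^2\,\nrmr u2^2=\nrmr{Au}2^2\ge0\,,
\]
hence $\lambda_1\big(\nu\,(\nu+1)\,(\cosh s)^{-2}\big)\ge\nu^2$, with equality in the Rayleigh quotient only if $Au=0$, i.e.\ $u$ is a multiple of $u_0$.

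Next I would observe that $u_0\in\mathrm H^1(\R)$ for every $\nu>0$: since $u_0$ and $u_0'=-\nu\,(\cosh s)^{-\nu-1}\sinh s$ both decay like $e^{-\nu|s|}$ at infinity, they are square-integrable. Therefore the infimum of the Rayleigh quotient is attained at $u_0$ and equals $-\nu^2$; since the potential tends to $0$ at infinity the essential spectrum is $[0,\infty)$, so $-\nu^2<0$ is a genuine isolated eigenvalue with eigenfunction $u_0$. This proves $\lambda_1\big(\nu\,(\nu+1)\,(\cosh s)^{-2}\big)=\nu^2$. (Alternatively, one may simply invoke that a Schr\"odinger operator possessing a strictly positive $\mathrm H^1$ eigenfunction attains its lowest eigenvalue there.)

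For the second identity I would rescale: with $t=B\,s$ one has
\[
-\,\frac{d^2}{ds^2}-A\,(\cosh(B\,s))^{-2}=B^2\(-\,\frac{d^2}{dt^2}-\tfrac A{B^2}\,(\cosh t)^{-2}\)\,,
\]
and I would take $\nu>0$ to be the positive root of $\nu\,(\nu+1)=A/B^2$, namely $\nu=\frac1{2\,B}\big(\sqrt{B^2+4\,A}-B\big)$. The first part then gives that the operator in parentheses has lowest eigenvalue $-\nu^2$, so $\lambda_1(V_{A,B})=B^2\,\nu^2=\tfrac14\big(\sqrt{4\,A+B^2}-B\big)^2$, as claimed.

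The only step that requires genuine care rather than bookkeeping is the verification of the factorization identity $A^*A=-\,\frac{d^2}{ds^2}-\nu\,(\nu+1)\,(\cosh s)^{-2}+\nu^2$ together with the remark that the candidate minimizer $u_0$ lies in $\mathrm H^1(\R)$ for all $\nu>0$, so that $-\nu^2$ is actually attained as an eigenvalue and is not merely the edge of the continuous spectrum. Everything else reduces to a change of variables and an elementary algebraic simplification.
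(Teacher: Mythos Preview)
Your proof is correct and takes a genuinely different route from the paper's. The paper reads off the eigenfunction $w(s)=(\cosh s)^{-2/(p-2)}$ from the Euler--Lagrange equation of the one-dimensional Gagliardo--Nirenberg problem treated just above, notes that $w>0$ and hence spans the ground state, and then reparametrizes $p\mapsto\nu$ via $\nu(\nu+1)=2p/(p-2)^2$. Your argument is more self-contained: the factorization $A^*A=-\,d^2/ds^2-\nu(\nu+1)(\cosh s)^{-2}+\nu^2$ with $A=d/ds+\nu\tanh s$ yields the spectral bound directly and identifies the ground state as $\ker A$, so no appeal to a ``positive eigenfunction is the ground state'' principle or to the surrounding nonlinear machinery is needed. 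The paper's route, by contrast, makes the link to the Keller--Lieb--Thirring context explicit. One small slip: since $-\lambda_1$ is defined to be the lowest eigenvalue, the inequality $A^*A\ge0$ gives $\lambda_1\le\nu^2$, not $\lambda_1\ge\nu^2$ as you wrote; your conclusion is unaffected because you then produce $u_0\in\ker A\cap\mathrm H^1(\R)$ and obtain equality. The scaling reduction for $V_{A,B}$ is the same in both proofs.
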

\begin{proof} We read from the proof of Lemma~\ref{nagy} that for any $2<p<\infty$ the function $w(s)=(\cosh s)^{-2/(p-2)}$, i.e., $w(s)=(p/2)^{-1/(p-2)}\,v\big(2\,s/(p-2)\big)$ belongs to $\mathrm H^1(\R)$ and solves
$$
-\,(p-2)^2\,w'' + 4\,w - 2\,p\,(\cosh s)^{-2}\,w = 0 \,.
$$
Thus, $-\,4/(p-2)^2$ is an eigenvalue of the Schr\"odinger operator with potential $-\,2\,p\,(p-2)^{-2}\,(\cosh s)^{-2}$. Since $w$ is positive, it is the smallest. Reparametrizing $\nu\,(\nu+1)=2\,p/(p-2)^2$ yields the first assertion. The second one follows by solving $\nu\,(\nu+1)=A/B^2$ and scaling $s\mapsto B\,s$.
\end{proof}

We refer to~\cite[p.~74]{Landau-Lifschitz-67},~\cite[Section 4.2.2]{MR4496335} and~\cite{poschl1933bemerkungen,MR0121101} for further details on the ground state of the P\"oschl-Teller operator and its spectrum.

\section{Spinor spherical harmonics decomposition}\label{App5}

The purpose of this appendix is twofold. On the one hand, we want to give a self-contained proof of the decomposition of $\mathrm L^2(\Sp,\C^2)$ into eigenspaces of the operator $\sigma\cdot L$ and on the other hand, we want to recall the construction of an explicit basis which will be convenient when doing computations.

We consider the operator
$$
D := \sigma\cdot L +1
\quad\text{in}\quad\mathrm L^2(\Sp,\C^2)\,.
$$
The following result is well-known in physics and in differential geometry. We provide an elementary proof along the lines of~\cite{MR1361548}.
\begin{theorem}
The spectrum of $D$ is discrete and consists of the eigenvalues $\pm (1+\kappa)$, $\kappa\in\N$, with multiplicity $2\,(\kappa+1)$.
\end{theorem}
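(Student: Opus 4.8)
The plan is to diagonalize $D=\sigma\cdot L+1$ by reducing it to finite-dimensional blocks indexed by the spectrum of $L^2=-\Delta_\Sp$, and then to pin down the multiplicities by a trace computation; the anticommutation relation~\eqref{commutation} will serve as a consistency check. First I would record the two algebraic inputs. From $(\sigma\cdot\mathsf a)(\sigma\cdot\mathsf b)=\mathsf a\cdot\mathsf b+i\,\sigma\cdot(\mathsf a\wedge\mathsf b)$ and the commutation relations $[L_i,L_j]=i\sum_k\varepsilon_{ijk}L_k$ for the angular momentum $L=\omega\wedge(-i\nabla)$ on $\Sp$ (which give $L\wedge L=i\,L$), one obtains
\[
(\sigma\cdot L)^2=L^2+i\,\sigma\cdot(L\wedge L)=L^2-\sigma\cdot L\,,\qquad\text{equivalently}\qquad (\sigma\cdot L)\,D=(\sigma\cdot L)(\sigma\cdot L+1)=L^2\,.
\]
Together with the classical fact that $L^2=-\Delta_\Sp$ has discrete spectrum $\big\{\ell(\ell+1):\ell\in\N\big\}$, whose $\ell$-th eigenspace is the space $\mathcal Y_\ell$ of spherical harmonics of degree $\ell$ with $\dim\mathcal Y_\ell=2\ell+1$, this yields the orthogonal decomposition $\mathrm L^2(\Sp,\C^2)=\bigoplus_{\ell\ge0}\mathcal Y_\ell\otimes\C^2$ into finite-dimensional subspaces.

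Next I would observe that $\sigma\cdot L$ commutes with $L^2$ (since $[L^2,L_j]=0$) and hence preserves each block $\mathcal Y_\ell\otimes\C^2$. On such a block the identity above becomes $(\sigma\cdot L)^2+\sigma\cdot L-\ell(\ell+1)=0$, so the minimal polynomial of $\sigma\cdot L|_{\mathcal Y_\ell\otimes\C^2}$ divides $(x-\ell)(x+\ell+1)$; thus $\sigma\cdot L$ is diagonalizable on the block with eigenvalues among $\{\ell,-\ell-1\}$. Because the blocks are finite-dimensional, mutually orthogonal, and span a dense subspace, and because these eigenvalues tend to $\pm\infty$, the symmetric operator $\sigma\cdot L$ on smooth spinors is essentially self-adjoint, its closure being the orthogonal direct sum of the blocks, with purely discrete spectrum; the same then holds for $D$.

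It remains to compute the multiplicities. Since every Pauli matrix is traceless, $\mathrm{Tr}\big(\sigma\cdot L|_{\mathcal Y_\ell\otimes\C^2}\big)=\sum_j\mathrm{Tr}_{\C^2}(\sigma_j)\,\mathrm{Tr}_{\mathcal Y_\ell}(L_j)=0$. For $\ell\ge1$, letting $m_+$ and $m_-$ denote the multiplicities of $\ell$ and of $-\ell-1$ on that block, the relations $m_++m_-=2(2\ell+1)$ and $\ell\,m_+-(\ell+1)\,m_-=0$ force $m_+=2(\ell+1)$ and $m_-=2\ell$; in particular both eigenvalues genuinely occur. For $\ell=0$ one has $L=0$, so $\sigma\cdot L=0$ with multiplicity $2$ there. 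Since an eigenvalue $k\ge0$ of $\sigma\cdot L$ can come only from the block $\ell=k$ and an eigenvalue $k\le-2$ only from the block $\ell=-k-1$, it follows that $\mathrm{Sp}(\sigma\cdot L)=\{0,1,2,\dots\}\cup\{-2,-3,\dots\}=\Z\setminus\{-1\}$, with the eigenvalue $k$ of multiplicity $2(k+1)$ for $k\ge0$ and $-2(k+1)$ for $k\le-2$. Translating by $1$, the operator $D$ has spectrum $\Z\setminus\{0\}$, and the eigenvalue $n=k+1$ has multiplicity $2|n|$; writing $n=\pm(1+\kappa)$ with $\kappa\in\N$, this equals $2(\kappa+1)$, which is exactly the assertion. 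As a check, \eqref{commutation} gives $(\sigma\cdot\omega)\,D=-D\,(\sigma\cdot\omega)$ with $(\sigma\cdot\omega)^2=1$, so $\sigma\cdot\omega$ maps $\ker(D-\mu)$ isomorphically onto $\ker(D+\mu)$, confirming the symmetry of the spectrum and the equality of the two multiplicities.

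The only points requiring real care are the standard-but-essential facts behind the first paragraph — that the intrinsic $L$ on $\Sp$ satisfies the $\mathfrak{su}(2)$ relations and that $L^2$ coincides with $-\Delta_\Sp$, so that the spherical harmonics furnish the stated block decomposition — together with the operator-theoretic bookkeeping needed to pass from the finite blocks to $D$ on all of $\mathrm L^2(\Sp,\C^2)$; the remaining computations are routine. As an alternative to the trace argument, one may instead perform the Clebsch--Gordan decomposition of $\mathcal Y_\ell\otimes\C^2$ explicitly, which has the bonus of producing the orthonormal eigenbasis $\{\chi_k^m\}$ used elsewhere and recorded in Lemma~\ref{basis}.
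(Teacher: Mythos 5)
Your proof is correct, and it takes a genuinely different route from the paper's. The paper first identifies the lowest eigenspaces $\ker(D\mp 1)=\mathcal M_\pm$ explicitly, then computes the spectrum of the shifted squares $(D\pm\tfrac12)^2$ by reducing to the scalar operator $-\Delta+\tfrac14$ on $\mathcal Y_\ell\otimes\mathcal M_\pm$, and finally pins down the multiplicities by an induction on $\ell$ that peels off one pair of eigenvalues at a time using the anticommutation relation~\eqref{commutation}. You instead keep the $L^2$-invariant blocks $\mathcal Y_\ell\otimes\C^2$, use the polynomial identity $(\sigma\cdot L)^2+\sigma\cdot L=L^2$ to see that $\sigma\cdot L$ is diagonalizable there with eigenvalues among $\{\ell,-\ell-1\}$, and then extract the multiplicities from the tracelessness of the Pauli matrices via the linear system $m_++m_-=2(2\ell+1)$, $\ell\,m_+-(\ell+1)\,m_-=0$. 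This trace argument replaces the paper's inductive bookkeeping with a single clean computation and is arguably more economical; the paper's route has the compensating advantage of exhibiting the lowest eigenspaces $\mathcal M_\pm$ and the role of $\sigma\cdot\omega$ explicitly, facts that are reused elsewhere (notably in the definition of symmetric spinors and in the proof of Proposition~\ref{symmetricproblem}). Both proofs rest on the same two inputs: the identity $(\sigma\cdot L)^2=L^2-\sigma\cdot L$ and the known spectrum of the scalar Laplace--Beltrami operator on $\Sp$.
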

\begin{proof}
$\rhd$ \emph{Step 1.} Let $\mathcal M_+:=\C^2$, identified with constant spinors in $\mathrm L^2(\Sp,\C^2)$, and
$$
\mathcal M_-:=\big\{ \sigma\cdot\omega\,\Psi\,:\,\Psi\in\C^2\big\}\,.
$$
We claim that
\be{eq:lowest}
\ker(D\mp 1) = \mathcal M_\pm\,.
\ee
Indeed, clearly we have $D\,\psi=\psi$ for $\psi\in\mathcal M_+$ and conversely, if $\psi\in \mathrm H^1(\Sp,\C^2)$ satisfies $D\,\psi =\psi$, then $-\,\Delta\psi =0$ (since $-\,\Delta = (\sigma\cdot L)^2 + \sigma\cdot L$) and consequently $\psi\in\mathcal M_+$. This proves~\eqref{eq:lowest} for the upper signs. The other case in~\eqref{eq:lowest} follows from this using the identity
\be{eq:anticomm}
D\,\sigma\cdot\omega = -\,\sigma\cdot\omega\,D\,,
\ee
see~\eqref{commutation}, which is easily verified. In~\eqref{eq:anticomm}, $\sigma\cdot\omega$ is understood as a multiplication operator.

\medskip\noindent$\rhd$ \emph{Step 2.} We claim that for either choice of the sign, the spectrum of $\big(D\,\pm\tfrac12\big)^2$ consists precisely of the eigenvalues $(\ell+1/2)^2$, $\ell\in\N$, with multiplicities $2\,(2\,\ell+1)$.

To prove this, let $f$ be a $\C$-valued function on $\Sp$. We note that
\begin{equation}
\label{eq:product}
\(D\,\mp\tfrac12\)^2\,(f\,\psi) = \(-\Delta f+ \tfrac14 f\)\psi
\quad\text{if}\quad\psi\in\mathcal M_\pm\,.
\end{equation}
Here $-\,\Delta=L^2$ denotes the Laplace-Beltrami operator on $\Sp$ with the sign convention $-\,\Delta\geq 0$. The proof of~\eqref{eq:product} is immediate for the upper sign, where it follows from $D^2-D=-\,\Delta$ and $\Delta(f\,\psi)=(\Delta f)\,\psi$ since $\psi$ is constant. Identity~\eqref{eq:product} for the lower sign follows from that for the upper sign, since by~\eqref{eq:anticomm} $\big(D+\tfrac12\big)^2\,\sigma\cdot\omega = \sigma\cdot\omega\,\big(D-\tfrac12\big)^2$.

It is well known that the spectrum of the operator $-\,\Delta+\frac14$ acting on scalar functions consists of the eigenvalues $(\ell+1/2)^2$, $\ell\in\N$, with multiplicity $2\,\ell+1$. Therefore, the numbers $(\ell+1/2)^2$, $\ell\in\N$, are eigenvalues of $\big(D\,\pm\tfrac12\big)^2$ of multiplicity $\geq2\,(2\,\ell+1)$. Eigenfunctions are given by functions of the form $Y_{\ell,m}\Psi_i$, where $Y_{\ell,m}$ are eigenfunctions of $-\,\Delta$ and $\{\Psi_1,\Psi_2\}$ is a basis of $\mathcal M_\pm$. Since functions of this form span $\mathrm L^2(\Sp,\C^2)$, we obtain the assertion made at the beginning of this step.

\medskip\noindent$\rhd$ \emph{Step 3.}
We can now complete the proof of the theorem. The computation of the spectrum of $\big(D\,\pm\tfrac12\big)^2$ shows that the spectrum of $D$ is contained in $\Z$ and that one has for $\epsilon\in\{+1,-1\}$
\begin{align}\label{eq:multsum}
\dim\ker\(\big(D+\tfrac\epsilon2\big)-\big(\ell+\tfrac12\big)\) + \dim\ker\(\big(D+\tfrac\epsilon2\big)+\big(\ell+\tfrac12\big)\) & = \dim\ker\(\big(D+\tfrac\epsilon2\big)^2 - \big(\ell+\tfrac12\big)^2\) \notag \\
& = 2\,(2\,\ell+1)\,.
\end{align}
Applying this identity with $\ell=0$ and either choice of $\epsilon$ and recalling~\eqref{eq:lowest}, we deduce that $\dim\ker D=0$. Next, applying~\eqref{eq:multsum} with $\ell=1$ and both choices of $\epsilon$ and recalling~\eqref{eq:lowest}, we deduce that $\dim\ker(D+2)=4=\dim\ker(D-2)$. Next, applying~\eqref{eq:multsum} with $\ell=2$ and both choices of $\epsilon$ and recalling the information about the eigenvalues $\pm 2$, we deduce that $\dim\ker(D+3)=6=\dim\ker(D-3)$. Continuing in this way we obtain the assertion of the theorem.
\end{proof}

\begin{corollary}\label{decomp}
Any function $\psi\in \mathrm L^2(\Sp,\C^2)$ has an orthogonal decomposition
$$
\psi = \sum_{\kappa\in\N,\nu=\pm} \psi_{\kappa,\nu}
$$
with $\|\psi\|^2 = \sum_{k\in\N,\nu=\pm} \|\psi_{k,\nu}\|^2$ such that
$$
D\,\psi_{\kappa,\pm}=\pm\,(1+\kappa)\,\psi_{\kappa,\pm}
\quad\mbox{and}\quad
-\Delta\psi_{\kappa,\nu} = \begin{cases} \kappa\,(\kappa+1)\,\psi_{\kappa,\nu} & \text{if}\quad\nu=+\,, \\
(\kappa+1)\,(\kappa+2)\,\psi_{\kappa,\nu} & \text{if}\quad\nu=-\,.
\end{cases}
$$
Moreover, with $J=L+\tfrac12\,\sigma$ we have
$$
J^2\,\psi_{\kappa,\nu} = \big(\kappa+\tfrac12\big)\,\big(\kappa+\tfrac32\big)\,\psi_{\kappa,\nu}\,.
$$
\end{corollary}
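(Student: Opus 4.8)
The plan is to deduce the corollary directly from the Theorem just proved together with two elementary operator identities. \textbf{First}, I would record that $\sigma\cdot L$, and hence $D=\sigma\cdot L+1$, is self-adjoint on its natural domain in $\mathrm L^2(\Sp,\C^2)$ with form domain $\mathrm H^1(\Sp,\C^2)$; since $\Sp$ is compact the embedding $\mathrm H^1(\Sp,\C^2)\hookrightarrow\mathrm L^2(\Sp,\C^2)$ is compact, so $D$ has compact resolvent and its eigenfunctions form an orthonormal basis of $\mathrm L^2(\Sp,\C^2)$ (this completeness is in any case already implicit in the proof of the Theorem, since the functions $Y_{\ell,m}\,\Psi_i$ span $\mathrm L^2(\Sp,\C^2)$). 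By the Theorem the spectrum of $D$ is exactly $\{\pm(1+\kappa):\kappa\in\N\}$, so the eigenspaces $\ker\big(D\mp(1+\kappa)\big)$ are mutually orthogonal and span $\mathrm L^2(\Sp,\C^2)$. Letting $\psi_{\kappa,\pm}$ be the orthogonal projection of $\psi$ onto $\ker\big(D\mp(1+\kappa)\big)$ yields the decomposition $\psi=\sum_{\kappa,\nu}\psi_{\kappa,\nu}$ together with Parseval's identity $\|\psi\|^2=\sum_{\kappa,\nu}\|\psi_{\kappa,\nu}\|^2$, and by construction $D\,\psi_{\kappa,\pm}=\pm(1+\kappa)\,\psi_{\kappa,\pm}$.

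\textbf{Second}, for the Laplace--Beltrami statement I would use the identity $-\Delta=(\sigma\cdot L)^2+\sigma\cdot L$, already invoked in the proof of the Theorem, and rewrite it in terms of $D$: since $\sigma\cdot L=D-1$, one has $-\Delta=(D-1)^2+(D-1)=D\,(D-1)$. Evaluating on $\psi_{\kappa,+}$, where $D=1+\kappa$, gives $-\Delta\,\psi_{\kappa,+}=(1+\kappa)\,\kappa\,\psi_{\kappa,+}=\kappa\,(\kappa+1)\,\psi_{\kappa,+}$; evaluating on $\psi_{\kappa,-}$, where $D=-(1+\kappa)$, gives $-\Delta\,\psi_{\kappa,-}=\big(-(1+\kappa)\big)\big(-(2+\kappa)\big)\psi_{\kappa,-}=(\kappa+1)(\kappa+2)\,\psi_{\kappa,-}$, as claimed.

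\textbf{Third}, for the statement on $J^2$ with $J=L+\tfrac12\,\sigma$ I would expand $J^2=L^2+\sigma\cdot L+\tfrac34$, using $L^2=-\Delta$, the fact that $\sigma_1^2+\sigma_2^2+\sigma_3^2=3$, and that the cross term contributes $2\cdot\tfrac12\,L\cdot\sigma=\sigma\cdot L$ (the components of $L$ and $\sigma$ commuting, as $\sigma$ acts only on the spinor index). Combining with $-\Delta=D\,(D-1)$ and $\sigma\cdot L=D-1$ gives $J^2=D\,(D-1)+(D-1)+\tfrac34=D^2-\tfrac14$. Hence on each $\psi_{\kappa,\nu}$, where $D^2=(1+\kappa)^2$, we obtain $J^2\,\psi_{\kappa,\nu}=\big((1+\kappa)^2-\tfrac14\big)\psi_{\kappa,\nu}=\big(\kappa+\tfrac12\big)\big(\kappa+\tfrac32\big)\psi_{\kappa,\nu}$.

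There is no serious obstacle here; the only points requiring a little care are the self-adjointness/compact-resolvent remark that justifies completeness of the eigenfunctions, and the bookkeeping in the two operator identities $-\Delta=D\,(D-1)$ and $J^2=D^2-\tfrac14$. Everything else is a direct substitution of the eigenvalue $\pm(1+\kappa)$ into these identities.
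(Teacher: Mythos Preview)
Your proof is correct and follows essentially the same route as the paper: the decomposition comes from the spectral theorem applied to the self-adjoint operator $D$ (whose spectrum was computed in the preceding Theorem), and the eigenvalues of $-\Delta$ and $J^2$ follow from the identities $-\Delta=(\sigma\cdot L)^2+\sigma\cdot L=D^2-D$ and $J^2=L^2+\sigma\cdot L+\tfrac34$. Your factored forms $-\Delta=D(D-1)$ and $J^2=D^2-\tfrac14$ make the eigenvalue computation marginally cleaner than the paper's direct substitution, but the argument is the same.
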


This follows immediately by spectral theory from the previous theorem. For the assertion about the Laplace-Beltrami operator we note that $-\,\Delta = (\sigma\cdot L)^2+\sigma\cdot L$, so
$$
-\Delta = D^2 - D\,,
$$
and $(1+\kappa)^2 - (1+\kappa) = \kappa\,(\kappa+1)$, $(1+\kappa)^2+(1+\kappa) = (\kappa+1)\,(\kappa+2)$. The assertion about $J$ follows from $J^2=L^2+\sigma\cdot L+\tfrac34$ and
$$
\Big( \pm(1+k)\,\big(\pm (1+k) - 1\big) \Big) + \big( \pm (1+k)-1 \big) + \tfrac34 = \big(k+\tfrac12\big)\,\big(k+\tfrac32\big)\,.
$$

Up to now in this appendix, the eigenvalues of $D$ were labelled as $\nu\,(1+\kappa)$ with \hbox{$\kappa\in\N$} and $\nu\in\{+,-\}$. In order to conform with the physics literature we now focus on the eigenvalues of $\sigma\cdot L = D-1$ and label those by $k\in\Z\setminus\{0\}$. Thus, $k=\kappa$ for $\nu=+$ and $k=-\,\kappa-2$ for $\nu=-$, where in both cases $\kappa\in\N$. Then an equivalent way of phrasing Corollary~\ref{decomp} is as follows.

\begin{corollary}\label{decompalt}
There is an orthogonal decomposition
$$
\mathrm L^2(\Sp,\C^2) = \bigoplus_{k\in\Z\setminus\{-1\}} \mathcal H_k
$$
such that, for any $k\in\Z\setminus\{-1\}$, we have $\dim\mathcal H_k = 2\,|k+1|$
and for any $\psi\in\mathcal H_k$ we have
$$
\sigma\cdot L\,\psi = k\,\psi\,,
\quad
L^2\,\psi = k\,(k+1)\,\psi\,,
\quad
J^2\,\psi = \big(k+\tfrac12\big)\,\big(k+\tfrac32\big)\,\psi\,.
$$
\end{corollary}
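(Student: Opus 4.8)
The plan is to obtain Corollary~\ref{decompalt} as a straightforward relabeling of Corollary~\ref{decomp}, following the paragraph that precedes the statement. First I would recall that, by the spectral theorem for $D=\sigma\cdot L+1$ proved above, every $\psi\in\mathrm L^2(\Sp,\C^2)$ decomposes orthogonally as $\psi=\sum_{\kappa\in\N,\,\nu=\pm}\psi_{\kappa,\nu}$, where $D\,\psi_{\kappa,\pm}=\pm(1+\kappa)\,\psi_{\kappa,\pm}$ and the eigenspace of $D$ for the eigenvalue $\nu\,(1+\kappa)$ has dimension $2\,(\kappa+1)$. Setting $\mathcal H_k:=\ker\big(D-(k+1)\big)$, the substitution $k=\kappa$ when $\nu=+$ and $k=-\kappa-2$ when $\nu=-$ makes the corresponding $D$-eigenvalue equal to $k+1$ in both cases, so $\sigma\cdot L=D-1$ acts as multiplication by $k$ on $\mathcal H_k$. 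Since $\kappa$ runs over $\N$, the index $k$ runs over $\N\cup\{-2,-3,\dots\}=\Z\setminus\{-1\}$, and we get the orthogonal decomposition $\mathrm L^2(\Sp,\C^2)=\bigoplus_{k\in\Z\setminus\{-1\}}\mathcal H_k$.

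Next I would translate the numerical data attached to each $\mathcal H_k$. For the dimension: when $\nu=+$ one has $k=\kappa\ge0$, so $2\,(\kappa+1)=2\,(k+1)=2\,|k+1|$; when $\nu=-$ one has $k=-\kappa-2\le-2$, so $2\,(\kappa+1)=2\,(-k-1)=2\,|k+1|$. Hence $\dim\mathcal H_k=2\,|k+1|$ uniformly. For the Laplace--Beltrami operator I would use the identity $L^2=-\,\Delta=(\sigma\cdot L)^2+\sigma\cdot L=D^2-D$, already noted after Corollary~\ref{decomp}, which on $\mathcal H_k$ gives $L^2=(k+1)^2-(k+1)=k\,(k+1)$. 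For $J^2$, starting from $J^2=L^2+\sigma\cdot L+\tfrac34$, I would compute on $\mathcal H_k$ that $k\,(k+1)+k+\tfrac34=k^2+2\,k+\tfrac34=\big(k+\tfrac12\big)\,\big(k+\tfrac32\big)$, as claimed.

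There is no real obstacle here: the statement is a bookkeeping consequence of the spectral theorem for $D$ together with Corollary~\ref{decomp}. The only points that deserve a moment's care are verifying that the single formula $\dim\mathcal H_k=2\,|k+1|$ correctly merges the two cases $\nu=\pm$, and observing that the value $k=-1$ is genuinely absent from the spectrum of $\sigma\cdot L$ — equivalently, $\ker D=\{0\}$ — which was already established in Step~3 of the proof of the theorem. Once these are checked, the three displayed eigenvalue identities and the dimension count follow immediately, completing the proof.
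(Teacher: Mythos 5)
Your argument is correct and is exactly the relabeling the paper has in mind: Corollary~\ref{decompalt} is stated there as ``an equivalent way of phrasing Corollary~\ref{decomp}'' via the substitution $k=\kappa$ for $\nu=+$ and $k=-\kappa-2$ for $\nu=-$, and the computations $L^2=D^2-D$ and $J^2=L^2+\sigma\cdot L+\tfrac34$ you carry out are the same ones the paper records after Corollary~\ref{decomp}. You have simply written out the bookkeeping the paper leaves implicit, so there is no gap and no divergence in method.
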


We now present an explicit basis of the spaces $\mathcal H_k$ following~\cite[pp.~61-62]{MR0096526} or~\cite[p.~127]{MR1219537}. Let $(Y_\ell^m)$, $\ell\in\N$, $m=-\,\ell,\ldots,\ell$, the usual family of spherical harmonics. These are complex-valued functions on $\Sp$ such that, for each $\ell\in\N$, $(Y_\ell^{-\ell},\ldots,Y_\ell^\ell)$ is an orthonormal (with respect to the un-normalized surface measure on $\mathbb S^2$) basis of the eigenspace of the operator $L^2$ corresponding to the eigenvalue $\ell\,(\ell+1)$. The basis can be chosen in such a way that in terms of the components of the operator $L=\omega\wedge(-i\,\nabla)$ we have for each $m=-\,\ell,\ldots,\ell$
\begin{eqnarray}
\label{eq:choicebasis}
L_3\,Y_\ell^m = m\,Y_\ell^m\,,
\quad
(L_1 \pm i L_2)\,Y_\ell^m = \sqrt{(\ell\pm m +1)\,(\ell\mp m)}\,Y_\ell^m\,.
\end{eqnarray}
Since $\overline{L_j f}=-\,L_j\overline f$, we may and will assume that
\begin{equation}
\label{eq:harmonicsconj}
\overline{Y_\ell^m} = (-1)^m\,Y_\ell^{-m}\,.
\end{equation}
In particular, $Y_\ell^0$ is real valued.

We now introduce the functions
$$
\chi_k^m=\frac{\sqrt{4\,\pi}}{\sqrt{2k+1}}\begin{pmatrix}\sqrt{k+m+1/2}\,Y^{m-1/2}_k\\\sqrt{k-m+1/2}\,Y^{m+1/2}_k\end{pmatrix}\,,\quad k \ge 0\,,\quad m=-\,k-1/2, -k+1/2, \dots, k+1/2\,,
$$
and
$$
\chi_k^m= \frac{\sqrt{4\,\pi}}{\sqrt{-2k-1}}\begin{pmatrix}\sqrt{-k-m-1/2}\,Y^{m-1/2}_{-k-1}\\-\sqrt{-k+m-1/2}\,Y^{m+1/2}_{-k-1}\end{pmatrix}\,,\quad k \le -2\,,\quad m= k+3/2, k+1/2, \dots, -k-3/2\,.
$$
(The factor $\sqrt{4\,\pi}$ is due to the fact that we consider the surface measure on $\Sp$ normalized to be a probability measure.)
\begin{lemma}\label{basis}
Let $k\in\Z\setminus\{-1\}$. The functions $\big(\chi_k^{-k-1/2},\ldots,\chi_k^{k+1/2}\big)$ when $k\geq 0$ and the functions $\big(\chi_k^{k+3/2},\ldots,\chi_k^{-k-3/2}\big)$ when $k\leq -2$ form an orthonormal basis of $\mathcal H_k$. Moreover, we have \hbox{$J_3\,\chi_k^m = m\,\chi_k^m$} for every $m$.
\end{lemma}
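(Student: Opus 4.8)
The plan is to check four things in turn: that each $\chi_k^m$ satisfies $\sigma\cdot L\,\chi_k^m=k\,\chi_k^m$ (so that $\chi_k^m\in\mathcal H_k$), that the listed functions are orthonormal, that there are exactly $2\,|k+1|=\dim\mathcal H_k$ of them, and that $J_3\,\chi_k^m=m\,\chi_k^m$; the first and fourth are finite computations with the ladder relations~\eqref{eq:choicebasis}, and then the third forces the listed functions to be an \emph{orthonormal basis} of $\mathcal H_k$. Throughout set $L_\pm:=L_1\pm i\,L_2$, so that $\sigma\cdot L$ sends a spinor with components $(f,g)$ to $\big(L_3f+L_-g,\ L_+f-L_3g\big)$ and $J_3=L_3+\tfrac12\,\sigma_3$ sends it to $\big((L_3+\tfrac12)f,\ (L_3-\tfrac12)g\big)$, and recall from~\eqref{eq:choicebasis} that $L_3\,Y_\ell^m=m\,Y_\ell^m$ and $L_\pm\,Y_\ell^m=\sqrt{(\ell\mp m)(\ell\pm m+1)}\,Y_\ell^{m\pm1}$.

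The identity $J_3\,\chi_k^m=m\,\chi_k^m$ is then immediate: the upper component of $\chi_k^m$ is a multiple of $Y^{m-1/2}_\ell$ (with $\ell=k$ if $k\ge0$ and $\ell=-k-1$ if $k\le-2$) and $(L_3+\tfrac12)Y^{m-1/2}_\ell=m\,Y^{m-1/2}_\ell$, while the lower component is a multiple of $Y^{m+1/2}_\ell$ and $(L_3-\tfrac12)Y^{m+1/2}_\ell=m\,Y^{m+1/2}_\ell$. For $\sigma\cdot L\,\chi_k^m=k\,\chi_k^m$ I would treat $k\ge0$ first. Dropping the common prefactor $\sqrt{4\pi}/\sqrt{2k+1}$, the upper component of $\sigma\cdot L\,\chi_k^m$ equals
\[
\Big[(m-\tfrac12)\sqrt{k+m+\tfrac12}+\sqrt{k-m+\tfrac12}\,\sqrt{(k+m+\tfrac12)(k-m+\tfrac12)}\Big]\,Y_k^{m-1/2}\,,
\]
and using $\sqrt{k-m+\tfrac12}\,\sqrt{(k+m+\tfrac12)(k-m+\tfrac12)}=(k-m+\tfrac12)\sqrt{k+m+\tfrac12}$ together with $(m-\tfrac12)+(k-m+\tfrac12)=k$, this collapses to $k\sqrt{k+m+\tfrac12}\,Y_k^{m-1/2}$, i.e.\ $k$ times the upper component of $\chi_k^m$; the lower component works the same way, the algebra now being $\sqrt{k+m+\tfrac12}\,\sqrt{(k-m+\tfrac12)(k+m+\tfrac12)}=(k+m+\tfrac12)\sqrt{k-m+\tfrac12}$ and $(k+m+\tfrac12)-(m+\tfrac12)=k$. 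The case $k\le-2$ is entirely analogous with $\ell=-k-1$ and the extra minus sign in the lower slot, and gives eigenvalue $-\ell-1=k$; alternatively one can reduce it to the case $k\ge0$ by noting that $\sigma\cdot\omega$ anticommutes with $\sigma\cdot L+1$ by~\eqref{commutation}, hence maps $\mathcal H_{-2-k}$ onto $\mathcal H_k$, and commutes with $J_3$, so $\sigma\cdot\omega\,\chi_{-2-k}^m$ is a $J_3$-eigenvector of eigenvalue $m$ lying in $\mathcal H_k$ and one only checks the resulting scalar is unimodular.

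For orthonormality, within a fixed $k$ the inner product $\langle\chi_k^m,\chi_k^{m'}\rangle_{\mathrm L^2(\Sp,d\omega)}$ is a combination of $\langle Y_\ell^{m-1/2},Y_\ell^{m'-1/2}\rangle$ and $\langle Y_\ell^{m+1/2},Y_\ell^{m'+1/2}\rangle$, which vanish for $m\ne m'$; for $m=m'$ the normalization $\|Y_\ell^{m}\|_{\mathrm L^2(\Sp,d\omega)}^2=\tfrac1{4\pi}$ together with $(k+m+\tfrac12)+(k-m+\tfrac12)=2k+1$ (resp.\ $(-k-m-\tfrac12)+(-k+m-\tfrac12)=-2k-1$ when $k\le-2$) yields norm one, which is exactly what the factor $\sqrt{4\pi}$ in the definition of $\chi_k^m$ is there for. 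Orthogonality of $\chi_k^m$ and $\chi_{k'}^{m'}$ for $k\ne k'$ is then automatic once we know $\chi_k^m\in\mathcal H_k$ and $\chi_{k'}^{m'}\in\mathcal H_{k'}$, since the $\mathcal H_k$ are mutually orthogonal (Corollary~\ref{decompalt}). Finally, counting the admissible values of $m$ gives $2k+2$ functions when $k\ge0$ and $-2k-2$ when $k\le-2$, i.e.\ $2\,|k+1|=\dim\mathcal H_k$ in both cases, so an orthonormal system of this cardinality inside $\mathcal H_k$ is an orthonormal basis. There is no real conceptual obstacle here: the only delicate point is the bookkeeping of the shifted indices $m\pm\tfrac12$ and the sign conventions in~\eqref{eq:choicebasis}, so that the eigenvalue identity $\sigma\cdot L\,\chi_k^m=k\,\chi_k^m$ must be checked carefully in the two cases (or the second reduced to the first as indicated above).
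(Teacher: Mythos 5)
Your proof is correct and follows essentially the same route as the paper: verify the $\sigma\cdot L$ and $J_3$ eigenvalue relations by direct computation with the ladder relations, obtain orthonormality, and conclude by counting against $\dim\mathcal H_k=2|k+1|$ from Corollary~\ref{decompalt}. The only slight variation is that you derive orthogonality for $m\neq m'$ directly from the orthonormality of the spherical harmonics $Y_\ell^{m\pm1/2}$, whereas the paper instead invokes the self-adjointness of $J_3$ together with the distinct $J_3$-eigenvalues; both are equally elementary.
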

\begin{proof}
Using~\eqref{eq:choicebasis} one easily verifies that $\sigma\cdot L\,\chi_k^m = k\,\chi_k^m$. Thus, these functions belong to~$\mathcal H_k$. Moreover, again because of~\eqref{eq:choicebasis}, we have $J_3 \chi_k^m = m\,\chi_k^m$. By self-adjointness of $J_3$ we deduce that the $\chi_k^m$ are orthogonal. Since $\dim\mathcal H_k = 2\,|k+1|$ by Corollary~\ref{decompalt}, these functions form a basis of $\mathcal H_k$. Finally, the normalization of $\chi_k^m$ follows immediately from that of the $Y_k^{m\pm 1/2}$.
\end{proof}

\noindent{\bf Acknowledgment:} Partial support through \emph{Conviviality} project (ANR-23-CE40-0003) of the French National Research Agency (J.D.), US National Science Foundation grant DMS-1954995 (R.L.F.) and DMS-2154340 (M.L.), the German Research Foundation grants EXC-2111-390814868 and TRR 352-Project-ID 470903074 (R.L.F.). The authors thank the referees for their comments and questions.\\
\noindent{\scriptsize\copyright\,2026\ by the authors. This paper may be reproduced, in its entirety, for non-commercial purposes. \href{https://creativecommons.org/licenses/by/4.0/legalcode}{CC-BY 4.0}}


\end{document}